\documentclass[]{article}

\title{\bd{THE SCROLLAR INVARIANTS OF CURVES MAPPING TO A HIRZEBRUCH SURFACE}}
\author{\small RICCARDO REDIGOLO}
\date{}

\usepackage{amsmath,amssymb,amsthm,amsfonts}
\usepackage{braket}
\usepackage[bb=libus]{mathalpha}
\usepackage{extpfeil}

\usepackage[
backend=biber,
style=alphabetic,
sorting=nyt,
maxbibnames=99,
maxcitenames=99
]{biblatex}
\defbibheading{bibliography}[\refname]{%
	\section*{\centering \MakeUppercase{REFERENCES}}%
}

\usepackage[margin=1.35in]{geometry}

\usepackage{graphicx}
\graphicspath{{./images/}}
\usepackage{tikz-cd}
\usepackage{tikz}
\usetikzlibrary{fit, backgrounds}
\usetikzlibrary{decorations.pathreplacing, calc}
\usetikzlibrary{positioning}
\usepackage{ytableau}

\usepackage{xcolor}
\definecolor{commentgreen}{RGB}{2,150,10}
\definecolor{eminence}{RGB}{140,48,170}
\definecolor{weborange}{RGB}{180,90,0}
\definecolor{frenchplum}{RGB}{129,20,83}
\definecolor{betterblue}{RGB}{0,90,150}
\definecolor{greyy}{RGB}{150,150,150}
\definecolor{dkgreen}{rgb}{0,0.6,0}
\definecolor{gray}{rgb}{50,50,50}
\definecolor{mauve}{rgb}{0.58,0,0.82}

\usepackage{listings}
\usepackage{hyperref}

\usepackage{fancyhdr}
\newcommand{\address}[1]{%
  \par\smallskip
  \noindent\textsl{Current address: #1}\par
}

\newcommand{\email}[1]{%
  \noindent\textsl{E-mail address: }\texttt{#1}\par
}

\usepackage{enumitem}

\newcommand{\bd}{\textbf}
\newcommand{\mc}{\mathcal}
\newcommand{\mcb}{\mathcalboondox}
\newcommand{\mbd}{\mathbi}

\newcommand{\ra}{\rightarrow}

\newcommand{\ol}{\overline}

\newcommand{\Pb}{\mathbb{P}}
\newcommand{\Proj}{{\mathbb{P}^1}}

\newcommand{\Tsch}{\mathcal{T}\!sch}

\newtheorem{thm}{Theorem}[section]

\newtheorem{prop}[thm]{Proposition}
\newtheorem{conj}[thm]{Conjecture}
\newtheorem{lem}[thm]{Lemma}
\newtheorem{cor}[thm]{Corollary}

\newtheoremstyle{remarkupright}
{\topsep} {\topsep} {\upshape} {} {\bfseries\upshape} {.} {.5em} {}
\theoremstyle{remarkupright}
\newtheorem{oss}[thm]{Remark}
\newtheorem*{akn}{Acknowledgments}
\newtheorem{ex}{Example}[section]

\DeclareFontFamily{U}{BOONDOX-calo}{\skewchar\font=45}
\DeclareFontShape{U}{BOONDOX-calo}{m}{n}{<-> s*[1.05] BOONDOX-r-calo}{}
\DeclareFontShape{U}{BOONDOX-calo}{b}{n}{<-> s*[1.05] BOONDOX-b-calo}{}
\DeclareMathAlphabet{\mathcalboondox}{U}{BOONDOX-calo}{m}{n}
\SetMathAlphabet{\mathcalboondox}{bold}{U}{BOONDOX-calo}{b}{n}
\DeclareMathAlphabet{\mathbcalboondox}{U}{BOONDOX-calo}{b}{n}
\def\mathbi#1{\textbf{\em #1}}

\makeatletter
\renewcommand{\maketitle}{
	\begin{center}
		{\large\normalfont\bfseries \@title \par}
		\vskip 2em
		{\normalfont \@author \par}
		\vskip 4em
	\end{center}
}

\renewenvironment{abstract}{
	\quotation
	\noindent\small\text{ABSTRACT.}\quad
}{
	\endquotation
}
\makeatother

\usepackage{titlesec}
\titleformat{\section}
{\centering\small}
{\thesection.}
{1em}
{}

\makeatletter
\renewcommand\section{\@startsection {section}{1}{\z@}%
	{-3.5ex \@plus -1ex \@minus -.2ex}%
	{2.3ex \@plus.2ex}%
	{\centering\normalfont\small}}
\renewcommand\thesection{\arabic{section}}
\makeatother

\makeatletter
\renewenvironment{proof}[1][\proofname]{%
	\par
	\pushQED{\qed}%
	\normalfont\normalsize\topsep6\p@\@plus6\p@\relax
	\trivlist
	\item[\hskip\labelsep\itshape #1\@addpunct{.}]
}{%
	\popQED\endtrivlist\@endpefalse
}
\makeatother

\begin{document}
	\maketitle
	\thispagestyle{empty}
	\begin{abstract}
		In this note we analyse the scrollar invariants of $k:1$ covers of $\Proj$ that factor through the normalisation of a nodal curve in the $m$-th Hirzebruch surface $\mathbb F_m$. We then give an existence theorem for nodal curves in $\mathbb F_m$ having fixed class and singular locus. 
	\end{abstract}
	\section{INTRODUCTION}
	\normalfont
	Given a cover $\alpha:C\ra B$ between two smooth irreducible curves, the inclusion ${\alpha^\#:\mcb O_Y\ra \alpha_*\mcb O_X}$ gives rise to the following split exact sequence:
	\[0\ra \mcb O_Y\ra \alpha_*\mcb O_X\ra \Tsch(\alpha)^\vee\ra 0\]
	One calls $\Tsch(\alpha)$ the Tschirnhausen bundle of $\alpha$. This bundle has historically played a central role in the study of covers of curves, with its use to study trigonal curves dating back to the first half of the XX-th century (see for example the classical result of A. Maroni \cite{Maroni} or, for a more modern treatment, \cite{Miranda1985TripleCovers}). For foundational results on Tschirnhausen bundles see \cite{CE}, where, more generally, they study Gorenstein covers of algebraic varieties.
	\vskip 1 em
	When $B=\Pb^1_\mcb k$, the bundle splits as
	\[\Tsch(\alpha)\cong \mcb O_{\Proj}(e_1)\oplus \dots \oplus \mcb O_{\Proj}(e_{k-1}),\]
	with $0<e_1\leq \dots \leq e_{k-1}$. These integers are called the scrollar invariants of $\alpha$ and their study has shown to play a crucial role in the analysis of syzygies of algebraic curves, see for example the paper of F. O. Schreyer \cite{schreyer} or, for a more recent example, the one by G. Farkas and M. Kemeny \cite{farkasKemeny}. More broadly, the study of how line bundles on $k$-gonal curves split when pushed forward to $\Pb^1_\mcb k$ has shown to be central in studying the Brill Noether theory of those curves. Indeed, their study has allowed H. Larson in \cite{LHk} to refine an earlier result of N. Pflueger \cite{pfl} and determine the dimension of the irreducible components of the Brill-Noether loci for general $k$-gonal curves. This has then led to many interesting developments in Hurwitz-Brill-Noether theory like \cite{LLV}, \cite{CookPowellJensen2022} or \cite{farkasFeyzbakhshRojas}.
	\vskip 1 em
	Calling $\mc H_{g,k}$ the Hurwitz stack parametrising degree $k$ covers of $\Pb^1_\mcb k$ by a genus $g$ curve, it is a well-known result of \cite{ball89} that the scrollar invariants of a general cover $\alpha\in\mc H_{g,k}$, are balanced (i.e. $e_{k-1}-e_1\leq 1$). See also the work of I. Coskun, E. Larson and I. Vogt \cite{CoskunLarsonVogt2024} for the case where $\alpha$ is a primitive cover between curves of positive genus. As it often happens though, the general behaviour is very different from the one exhibited by the covers we are most familiar with. For example, if one takes $C$ to be a smooth degree $k$ curve in $\Pb^2_\mcb k$ and $\alpha:C\ra \Pb^1_\mcb k$ to be the projection from any point not in $C$, one easily checks that:
	\[
	\Tsch(\alpha)\cong \mcb O_{\Proj}(1)\oplus \dots \oplus \mcb O_{\Proj}(k-1)
	\]
	A natural question to ask is then which tuples of natural numbers can arise as the scrollar invariants of a primitive $k:1$ cover of $\Pb^1_\mcb k$. This problem has recently received the attention of R. Vakil and S. Vemulapalli who, in \cite{VV}, conjecture the following:
	\begin{conj}\label{VV conj} $\mathrm{(Vemulapalli}$, Conjecture $1.3$ in \cite{VV}) 
		Let $k\geq 3$, $g\geq 0$ and consider the following polytope:
		\[\mc P_k:=\left\{\mbd x\in \mathbb R^{k-1}\ |\ \sum_{i\leq k-1}x_i=1,\ 0\leq x_1\leq\dots\leq x_{k-1},\ x_{i+j}\leq x_i+x_j\right\}\]
		A tuple of positive integers $\mbd e:=(e_1,\dots,e_{k-1})$ with $e_1+\dots+e_{k-1}=g+k-1$ arises as the scrollar invariants of a primitive degree $k$ cover $\pi:C\ra \Pb^1_\mcb k$ from a genus $g$ curve if and only if $\left(\frac{ e_1}{g+k-1},\dots,\frac{ e_1}{g+k-1}\right)\in\mc P_d$. 
	\end{conj}
	A useful tool in tackling this problem has been the study of curves that are normalisation of some singular curves lying on a Hirzebruch surface $\mathbb F_m$ (see for example Theorem $1.5$ of \cite{VV}).
	\vskip 1 em
	The aim of this paper is to describe the scrollar invariants of the normalisations of nodal curves in a Hirzebruch surface. Throughout we shall work over an algebraically closed field $\mcb k$ of characteristic $0$ and will denote by $\pi:\mathbb F_m:=\Pb(\mcb O_\Proj\oplus \mcb O_\Proj(m))\ra \Proj$ the $m$-th Hirzebruch surface. For convenience, calling $\zeta:=c_1(\mcb O_{\mathbb F_m}(1))$ and $\xi:=\pi^*c_1(\mcb O_\Proj(1))$, we shall identify $\mathrm{Pic}(\mathbb F_m)$ with $\mathbb Z^{\oplus 2}$ and say that a divisor of class $k\zeta+a\xi$ is of type $(k,a)$.
	\vskip 1 em
	Let $\mc M_g(\mathbb F_m,(k,a))$ be the stack parametrising maps $f:C\ra \mathbb F_m$ where $C$ is a smooth curve of genus $g$ and $f_*[C]=(k,a)$. We will denote the main component of $\mc M_g(\mathbb F_m,(k,a))$, i.e. the one dominating the Severi variety parametrising irreducible curves in $\mathbb F_m$ of class $(k,a)$ and geometric genus $g$, by $\mc M_g(\mathbb F_m,(k,a))^{\mathrm{bir}}$. Notice that there is a natural map
	\[\mc M_g(\mathbb F_m,(k,a))^{\mathrm{bir}}\ra \mc H_{g,k}\]
	given by post composing with $\pi$. It is then natural to ask what are the scrollar invariants of a general point in the image of this map. Since, as noted for example in \cite{christ2023severi}, $\mc M_g(\mathbb F_m,(k,a))^{\mathrm{bir}}$ is non-empty if and only if $g\leq p_a(k,a):=\binom{k}{2}m+(k-1)(a-1)$, in what follows, we will always assume this condition to hold.
	\begin{thm}\label{thm 1}
		Let $\nu:C\ra \Gamma\subseteq \mathbb F_m$ be a general point of $\mc M_g(\mathbb F_m,(k,a))^{bir}$. Set $\alpha:=\pi\circ \nu$ and ${\delta:=p_a(\Gamma)-g}$. Assume that $(m,k,a)\notin \{(2,4,0),\ (1,6,0),\ (1,4,2),\ (0,4,4)\}$ and that 
		\[3\delta\leq |\mcb O_{\mathbb F_m}(k,a)|.\]
		Then, the Tschirnhausen bundle of $\alpha$ is balanced if and only if $\delta\geq \binom{k-1}{2}m$.\\
		If $\delta< \binom{k-1}{2}m$, choosing $\ell$ and $d$ such that $\binom{\ell+1}{2}m>\delta = \binom{\ell}{2}m+d$, the scrollar invariants of $C$ are:
		\[e_i=\begin{cases}
			im+a & \text{if } i\leq k-1-\ell\\
			(k-\ell)m+a-\lceil\frac{d}{\ell}\rceil                           & \text{if $k-\ell\leq i< k-\ell+j$}\\
			(k-\ell)m+a-\lfloor\frac{d}{\ell}\rfloor                                 & \text{if } i\geq k-\ell+j
		\end{cases}\]
		where $0\leq j<\ell$ is equal to $d$ $mod$ $\ell$.
	\end{thm}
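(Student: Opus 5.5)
The plan is to identify $\Tsch(\alpha)$ with an explicit subsheaf of the Tschirnhausen bundle of a \emph{smooth} curve of class $(k,a)$, and then compute its splitting type by an elementary-modification argument at the nodes.

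\textbf{Step 1 (smooth case and adjunction).} Since $\alpha$ is finite and flat, relative duality gives $\alpha_*\omega_C\cong(\alpha_*\mcb O_C)^\vee\otimes\omega_{\Proj}\cong \omega_{\Proj}\oplus \Tsch(\alpha)\otimes\omega_{\Proj}$. On the other side, adjunction for the nodal curve $\Gamma$ with node set $\Delta$ gives $\nu_*\omega_C=\omega_\Gamma\otimes I_\Delta$. Let $\mc I_\Delta\subseteq\mcb O_{\mathbb F_m}$ be the ideal of the reduced set of nodes. Using $K_{\mathbb F_m}=-2\zeta+(m-2)\xi$ and the sequence $0\ra K_{\mathbb F_m}\ra (K_{\mathbb F_m}+\Gamma)\otimes\mc I_\Delta\ra \omega_\Gamma\otimes I_\Delta\ra0$, I push forward along $\pi$. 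Here $R^1\pi_*K_{\mathbb F_m}=\omega_{\Proj}$ and $\pi_*\mcb O((k-2)\zeta+b\xi)=\bigoplus_{i=0}^{k-2}\mcb O(im+b)$. This should give
\[\Tsch(\alpha)\cong \ker\Big(\bigoplus_{i=1}^{k-1}\mcb O_{\Proj}(im+a)\xrightarrow{\ \mathrm{ev}\ }\bigoplus_{p\in\Delta}\mcb k_{\pi(p)}\Big).\]
When $\delta=0$ this recovers the Tschirnhausen bundle $\bigoplus_{i=1}^{k-1}\mcb O(im+a)$ of a smooth curve. In general it says that $\Tsch(\alpha)$ is an elementary modification of colength $\delta$ of that bundle.

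\textbf{Step 2 (position of the nodes).} For a general point of $\mc M_g(\mathbb F_m,(k,a))^{\mathrm{bir}}$ I need the nodes to be $\delta$ general points of $\mathbb F_m$, in particular lying on distinct fibres. This is exactly where the hypothesis $3\delta\leq |\mcb O_{\mathbb F_m}(k,a)|$ enters: each node imposes three conditions. I would use the existence theorem for nodal curves with prescribed singular locus (the paper's second main result) to get that the nodes can be taken general. In the fibre over $\pi(p)$, with coordinate $t$ on that fibre, the evaluation functional is the vector $(1,t,\dots,t^{k-2})$, up to scaling each summand by the local trivialisations. So the modification is determined by $\delta$ general points $(x_j,t_j)$.

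\textbf{Step 3 (splitting of the modification).} For a bundle $E=\bigoplus \mcb O(f_i)$, a colength-$\delta$ kernel at general points with general functionals has the most balanced splitting compatible with the semicontinuity constraint. The constraint is that the summands of degree at most some value $c$ cannot increase. Concretely, the conditions are absorbed by the top summands, which get levelled down. Taking the top $\ell$ summands $(k-\ell)m+a,\dots,(k-1)m+a$, their excess over $(k-\ell)m+a$ totals $\binom{\ell}{2}m$. The levelling therefore reaches the value $(k-\ell)m+a$ exactly when $\delta=\binom{\ell}{2}m$, and past that point the remaining $d$ conditions are spread evenly over $\ell$ summands. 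This yields the stated formula with the $\lceil d/\ell\rceil,\lfloor d/\ell\rfloor$ split, and full balancedness once $\delta\geq\binom{k-1}{2}m$.

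To prove this, I would verify the upper bound on $e_{k-1}$ (equivalently, the lower bound on $h^0(\Tsch(\alpha)(-c))$ from the sum of $\delta$ conditions) trivially. For the reverse inequality I would show that, for each twist $c$, the evaluation map
\[H^0\big(E(-c)\big)\ra\mcb k^{\delta}\]
has maximal rank. This amounts to interpolation: sections of $\mcb O_{\mathbb F_m}((k-2)\zeta+b\xi)$ impose independent conditions on $\delta$ general points.

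\textbf{Main obstacle.} The main obstacle is Step 3's maximal-rank statement. The functionals are not general vectors but points on a rational normal curve in each fibre, so independence must be checked geometrically on $\mathbb F_m$. The expectation is that general points impose independent conditions on the relevant linear systems except in a few special cases. The excluded triples $(2,4,0),(1,6,0),(1,4,2),(0,4,4)$ look like these exceptions: for example, $(0,4,4)$ should reduce to the classical failure of interpolation for conics or quadrics through special numbers of points. I would first attempt this by degeneration, specialising points onto a fibre or onto the directrix and inducting on $k$ and $\delta$, and then treat the exceptional numerology directly.
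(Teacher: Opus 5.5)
\textbf{There is a genuine gap, in Step 2.} Your Steps 1 and 3 are correct and are the paper's computation in sheafified form. The paper uses $\nu_*\omega_C\cong\omega_\Gamma\otimes\mc I_{S,\Gamma}$ to get $H^0(C,\omega_C\otimes\mc A^{\otimes -n})\cong H^0(\mathbb F_m,\mc I_S(k-2,a+m-2-n))$. This is exactly $H^0$ of your elementary modification twisted by $\mcb O_{\Proj}(-n-2)$; here $S$ is the node set, since in the paper $\Delta$ denotes the directrix. Maximal rank in every twist does yield the stated splitting.

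\textbf{You have put the difficulty in the wrong place.} That $\delta$ general \emph{simple} points impose $\min\{\delta,h^0\}$ conditions on $H^0(\mathbb F_m,\mcb O_{\mathbb F_m}(k-2,b))$ is automatic and has no exceptions: a nonzero linear system has a proper base locus, so you add the points one at a time (cf.\ Lemma \ref{interpolating general points} and Remark \ref{good remark}). The fibrewise rational-normal-curve shape of the functionals is irrelevant once the points are general in $\mathbb F_m$, and no degeneration is needed. The four excluded triples are obstructions for \emph{double} points. For $(0,4,4)$ and $\delta=8$, the only curve of bidegree $(4,4)$ singular at $8$ general points of $\Proj\times\Proj$ is $2E$, with $E$ the elliptic $(2,2)$-curve through them. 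For $(1,6,0)$ it is the double plane cubic through $9$ general points. So these exceptions belong to your Step 2.

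\textbf{Step 2 cannot be obtained from Theorem \ref{final thm}.} That theorem requires $a>2(\lceil 2u/k\rceil+1)s_1$. For the nodes to be general in $\mathbb F_m$ you must take $s_i\le m+1$, hence $u\ge\delta/(m+1)$. This bound is not implied by $3\delta\le\dim|\mcb O_{\mathbb F_m}(k,a)|$, and it fails outright for $a=0$. That excludes plane curves ($m=1$), including the paper's own genus-$8$ sextic example.

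\textbf{The missing idea.} The dimension count only shows that the system $\Sigma$ of curves of class $(k,a)$ singular along a general $S\in\mc Hilb^\delta(\mathbb F_m)$ is nonempty. What you need is that its general member is irreducible with nodes exactly at $S$. The paper gets this from Proposition 4.1 of \cite{arbarello1981}: if some member of $\Sigma$ has all components $D_j$ with $K_{\mathbb F_m}\cdot D_j<0$, then either the general member is irreducible and nodal exactly along $S$, or $\Sigma\subseteq|2E|$ for a smooth elliptic curve $E$. The condition $p_a(E)=1$ with $[E]=(k/2,a/2)$ gives precisely the four excluded triples.

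Since $K_{\mathbb F_m}\cdot\Delta=m-2\ge0$ for $m\ge2$, one must also rule out the directrix as a fixed component of $\Sigma$. The paper assumes it has multiplicity $r$ and takes an irreducible nodal member of the residual system of class $(k-r,a+rm)$. It then adds $\Delta$ and smooths as in Step II of \cite{arbarello1981}, producing a member of $\Sigma$ that contains $\Delta$ with multiplicity $r-1$, a contradiction.

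This gives dominance of $\mathrm{Sing}\colon\mc S\ra\mc Hilb^\delta(\mathbb F_m)$. Irreducibility of $\mc M_g(\mathbb F_m,(k,a))^{\mathrm{bir}}$ then lets you assume the nodes are general, and your Steps 1 and 3 finish the argument.
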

	Note that Theorem \ref{thm 1} shows that also in the second case the Tschirnhausen bundle of the cover is balanced to some extent.
	\vskip 1 em
	Building on this, the next natural question to ask should be what is the dimension of the Brill-Noether loci of these curves. When $g=p_a(k,a)$, a general point of $\mc M_g(\mathbb F_m,(k,a))^{bir}$ is an embedding and this problem has recently been solved by H. Larson and S. Vemulapalli in \cite{larson_vemulapalli_2024_planeBN}. 
	\vskip 1 em 
	There they show that, calling $\Delta$ the directrix of $\mathbb F_m$, $\alpha:C\ra \Proj$ a general point in the image of $\mc M_g(\mathbb F_m,(k,a))^{bir}\ra \mc H_{g,k}$ and defining for each $\mbd e,\mbd f\in \mathbb Z^k$
	\[U^{\mbd e,\mbd f}(C):=\{\mcb L\in Pic(C)\ |\ \alpha_*\mcb L\cong \mcb O_{\Proj}(e_1)\oplus\dots\oplus\mcb O_\Proj(e_k),\ \alpha_*(\mcb L(\Delta))\cong \mcb O_{\Proj}(f_1)\oplus\dots\oplus\mcb O_\Proj(f_k)\}\]
	then $U^{\mbd e,\mbd f}(C)$ is non empty if and only if the following inequalities are satisfied:
	\begin{enumerate}[label=$\roman*)$]
		\item for each $i$, $f_i\geq e_i$;
		\item for each $i\leq k-1$, $f_i\geq e_{i+1}-m$;
		\item $\sum_{i\leq k} (f_i-e_i)=a$
	\end{enumerate}
	In which case, denoting by $\mcb O_\Proj(\mbd e):=\mcb O_{\Proj}(e_1)\oplus\dots\oplus\mcb O_\Proj(e_k)$ and by $u(\mbd e):=h^1(\mc End(\mcb O_\Proj(\mbd e)))$, one has that 
	\begin{equation}\label{eq larson}
		\mathrm{dim}(U^{\mbd e,\mbd f}(C))=g-u(\mbd e)-u(\mbd f)+h^1(\Proj, \mc Hom(\mcb O_\Proj(\mbd e),\mcb O_\Proj(\mbd f))\otimes (\mcb O_\Proj\oplus \mcb O_\Proj(m)))
	\end{equation}
	Note that, unlike what happens for Hurwitz-Brill Noether general curves (see \cite{LHk}), these loci do not have the expected dimension (i.e. the expected dimension of the space of deformations of $\mcb O_\Proj(\mbd e)$ and $\mcb O_\Proj(\mbd f)$ which would be:
	$g-u(\mbd e)-u(\mbd f)$).
	\vskip 1 em
	Notice that, if $\nu:C\ra \Gamma\subseteq \mathbb F_m$ is a birational map from a smooth genus $g$ curve to a curve of class $(k,a)$ and $\mcb L$ is a line bundle on $C$, calling $\alpha:C\ra \Proj$ the map given by composing $\nu$ with $\pi$ and $\mbd e,\ \mbd f$ such that $\alpha_*\mcb L\cong \mcb O_\Proj(\mbd e)$ and $\alpha_*(\mcb L(\Delta))\cong \mcb O_\Proj(\mbd f)$, $\mbd e $ and $\mbd f$ still have to satisfy $i)$, $ii)$ and $iii)$.\\
	Indeed, $iii)$ follows from $\mathrm{deg}(\mcb O_C(\Delta)):=\mathrm{deg}(\mcb O_\Gamma(\Delta))=a$ and $i)$ from $h^0(C,\mcb O_C(\Delta))>0$.
	For $ii)$ note that, denoting $\nu_*\mcb L(\Delta)$ by $\mcb M$, $R^1\pi_*\mcb M=R^1\alpha_*(\mcb L(\Delta))=0$. Thus Proposition $1.1$ of \cite{Stromme1987} gives rise to the following exact sequence:
	\[0\ra \pi^*\mcb O_\Proj(\mbd e)(-1,0)\overset{\psi}{\ra} \pi^*\mcb O_\Proj(\mbd f)\ra \mcb M\ra 0\]
	Now, $\psi$ is represented by a pencil of $k\times k$ matrices with entries forms of degree $f_i-e_j$. As noted in Lemma $3.1$ of \cite{larson_vemulapalli_2024_planeBN}, if $ii)$ were to fail, $\mathrm{det}(\psi)$ would be reducible. Since $\mcb M=\mathrm{Coker}(\psi)$ is supported on $\Gamma$ irreducible, this cannot be the case.\\
	Hence for $U^{\mbd e,\mbd f}(C)$ to be non empty for any point $\alpha:C\ra\Proj$ in the image of the map $\mc M_g(\mathbb F_m,(k,a))^{bir}\ra \mc H_{g,k}$ we still need $i),ii)$ and $iii)$ to hold.
	\vskip 1 em
	Since our methods allow us to also compute the splitting type of $\mcb O_C(\Delta)$ also when $g\neq p_a(C)$, we decided to check whether, at least when $e_1+\dots+e_{k-1}=1-g-k$, the above formula still held. Since $\mcb O_C$ is the unique line bundle on $C$ having a global section and degree $0$, this {translates to showing:} 
	\begin{cor}\label{cor 1}
		With the assumptions of Theorem \ref{thm 1}, if $\delta\leq \binom{k-1}{2}m+(a+1)(k-2)$, $\mcb O_C$ satisfies:
		\[h^1(End(\alpha_*\mcb O_C))+h^1(End(\alpha_*\mcb O_C(\Delta))=g+h^1(\mc Hom(\alpha_*\mcb O_C,\alpha_*\mcb O(\Delta))\otimes (\mcb O_{\Proj}\oplus \mcb O_{\Proj}(m)))\]
	\end{cor}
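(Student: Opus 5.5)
The plan is to make both sides of the identity explicit. First I compute the two splitting types $\mbd e$ and $\mbd f$ with $\alpha_*\mcb O_C\cong\mcb O_\Proj(\mbd e)$ and $\alpha_*\mcb O_C(\Delta)\cong \mcb O_\Proj(\mbd f)$. Then I evaluate the three $h^1$'s using the standard formula: for split bundles on $\Proj$,
\[h^1\big(\mc Hom(\mcb O_\Proj(\mbd x),\mcb O_\Proj(\mbd y))\big)=\sum_{i,j}\max(0,x_i-y_j-1).\]
Finally I check the resulting combinatorial identity, using $g=p_a(k,a)-\delta$.

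\emph{Computing $\mbd e$.} This is Theorem \ref{thm 1}: $\alpha_*\mcb O_C\cong\mcb O_\Proj\oplus\Tsch(\alpha)^\vee$, with the $e_i$ given there. Under the hypothesis $\delta\leq\binom{k-1}{2}m+(a+1)(k-2)$ we are either in the unbalanced regime $\delta<\binom{k-1}{2}m$, described by $(\ell,d,j)$, or just past it, where $\Tsch(\alpha)$ is balanced.

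\emph{Computing $\mbd f$.} Here I would rerun the argument of Theorem \ref{thm 1} with the twist by $\Delta=\zeta-m\xi$. Start from the sequence
\[0\ra\mcb O_{\mathbb F_m}(\Delta)(-\Gamma)\ra\mcb O_{\mathbb F_m}(\Delta)\ra\mcb O_\Gamma(\Delta)\ra 0\]
and push forward along $\pi$. Since $\pi_*\mcb O_{\mathbb F_m}(\Delta)=\mcb O_\Proj\oplus\mcb O_\Proj(-m)$, and $R^1\pi_*$ of the first term is computed by relative duality, this gives $\pi_*\mcb O_\Gamma(\Delta)$ as an explicit split bundle whose summands are shifted by $m$ relative to those of $\pi_*\mcb O_\Gamma$. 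Next I use
\[0\ra\mcb O_\Gamma(\Delta)\ra\nu_*\mcb O_C(\Delta)\ra\textstyle\bigoplus_{p\in\mathrm{Sing}\,\Gamma}\mcb k_p\ra0,\]
so that $\alpha_*\mcb O_C(\Delta)$ is obtained from $\pi_*\mcb O_\Gamma(\Delta)$ by $\delta$ positive elementary modifications, one at the image of each node. For general nodes, which the generality/existence statement used for Theorem \ref{thm 1} provides, these modifications act as in the untwisted case. Each one first raises the summands of smallest degree, excluding the summand carrying the section $\mcb O_\Proj$, which does not move because $h^0(\mcb O_C(\Delta))\geq1$ is fixed. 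The bound $\delta\leq\binom{k-1}{2}m+(a+1)(k-2)$ is exactly what guarantees that this filling never forces that summand to move and never exceeds the ``balanced'' range. So $\mbd f$ is described by the same kind of $(\ell',d',j')$ data as $\mbd e$.

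\emph{The identity.} Plug $\mbd e$ and $\mbd f$ into the three $h^1$'s. Each decomposes into a contribution of the untouched summands, which is an arithmetic-progression sum, plus a contribution of the nearly balanced block of $\ell$ (resp.\ $\ell'$) summands, whose pairwise differences are at most $1$ and which therefore contribute only through its interaction with the other summands. The twist by $\mcb O_\Proj\oplus\mcb O_\Proj(m)$ in the right-hand term doubles each $\mc Hom$ contribution with a shift by $m$. Summing these and using $\sum_i e_i$, $\sum_i f_i=\sum_i e_i+a$ and $g=\binom{k}{2}m+(k-1)(a-1)-\delta$ should reduce the claim to an identity that is linear in $d$ (within each range of $\ell$) and telescopes in $\ell$. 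I would verify it first for $\delta=0$, where it is the formula \eqref{eq larson} of \cite{larson_vemulapalli_2024_planeBN}, and then show that both sides change by the same amount under $\delta\mapsto\delta+1$. Each node changes exactly one $e_i$ and one $f_i$ by one, so only a bounded number of terms move.

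\emph{Main obstacle.} I expect the hardest step to be the determination of $\mbd f$: showing that general nodes impose ``independent, maximally balancing'' modifications on the $\Delta$-twisted pushforward as well, and that the section summand stays put exactly in the stated range of $\delta$. The incremental check $\delta\mapsto\delta+1$ is then bookkeeping of which $\max(0,\cdot)$ terms switch on.
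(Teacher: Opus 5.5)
\textbf{Gap: the splitting type of $\alpha_*\mathcal{O}_C(\Delta)$ is asserted, not proved.} The step you flag as the main obstacle is where all the geometry lies, and the proposal gives no argument for it. Saying that the $\delta$ elementary modifications of $\pi_*\mathcal{O}_\Gamma(\Delta)$ ``act as in the untwisted case'' and fill the lowest summands greedily is not a genericity principle you can simply invoke. The direction of the modification at a node $p$ over $q=\pi(p)$ is forced by $p$: it is the line in $H^0(\mathcal{O}_{\Gamma\cap\pi^{-1}(q)}(\Delta))$ spanned by the nilpotent element at $p$. Moreover, the twisted situation genuinely differs from the untwisted one. Incidentally, $\pi_*\mathcal{O}_\Gamma(\Delta)\cong\mathcal{O}\oplus\mathcal{O}(-m)\oplus\bigoplus_{i=2}^{k-1}\mathcal{O}(-(im+a))$; it differs from $\pi_*\mathcal{O}_\Gamma$ only by replacing $\mathcal{O}(-(m+a))$ with $\mathcal{O}(-m)$, not by a shift of all summands.

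The missing idea is to read the splitting type off the numbers $h^0(C,\omega_C(-\Delta)\otimes\mathcal{A}^{-n})$ and compute these through adjoints, as the paper does. Twisting (\ref{h0 interp}) by $\mathcal{O}(-\Delta)=\mathcal{O}(-1,m)$ gives
\[0\to\mathcal{O}_{\mathbb{F}_m}(-3,2m-2-n)\to\mathcal{I}_S(k-3,2m+a-2-n)\to\nu_*(\omega_C(-\Delta)\otimes\mathcal{A}^{-n})\to 0.\]
Unlike in the untwisted case, $H^1(\mathcal{O}_{\mathbb{F}_m}(-3,2m-2-n))\cong H^0(\mathcal{O}_{\mathbb{P}^1}(m-2-n))$ is nonzero for $n\le m-2$. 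By duality this term is exactly the contribution of the $\mathcal{O}(-m)$ summand. Whether the nodes touch it is governed by the connecting map to $H^1(\mathcal{I}_S(k-3,2m+a-2-n))$.

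\textbf{The role of the hypothesis.} This connecting map is where the bound on $\delta$ really enters, not in the way you describe. The nodes of a general member are general points of $\mathbb{F}_m$ (proof of Theorem \ref{thm 1}). So Lemma \ref{interpolating general points} gives $h^1(\mathcal{I}_S(k-3,2m+a-2-n))=0$ for all $n\le m-2$ as soon as
\[\delta\le h^0(\mathcal{O}_{\mathbb{F}_m}(k-3,m+a))=\tbinom{k-1}{2}m+(a+1)(k-2).\]
Hence $h^0(\omega_C(-\Delta)\otimes\mathcal{A}^{-n})=(m-1-n)+\binom{k-1}{2}m+(a+m-1-n)(k-2)-\delta$. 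From this the paper gets $h^0(\mathcal{O}_C(\Delta))=1$ and that the second summand of $\alpha_*\mathcal{O}_C(\Delta)$ is $\mathcal{O}(-m)$. Then i), iii) and $e_1=m+a$ force $\alpha_*\mathcal{O}_C(\Delta)\cong\mathcal{O}\oplus\mathcal{O}(-m)\oplus\bigoplus_{i\ge2}\mathcal{O}(-e_i)$. The three $h^1$'s are then compared directly with $h^1(\mathcal{E}nd(\alpha_*\mathcal{O}_C))$, with no induction on $\delta$.

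Be aware that $e_1=m+a$ holds only for $\delta\le\binom{k-1}{2}m$. Beyond that you need the same count for every $n$: for $n\ge m-1$ the extra term vanishes and Remark \ref{good remark} computes $h^0(\mathcal{I}_S(k-3,2m+a-2-n))$. It is this count that would actually justify your greedy description of $\mathbf{f}$ on the whole range, including the balanced part where i) and iii) alone no longer pin $\mathbf{f}$ down. With that input, your $\delta\mapsto\delta+1$ bookkeeping anchored at (\ref{eq larson}) is a legitimate alternative to the paper's direct computation. Without it, nothing in the proposal determines $\alpha_*\mathcal{O}_C(\Delta)$.
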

	This could hint at the fact that, at least for small $p_a(k,a)-g$, equation $(\ref{eq larson})$ should still give the correct dimension of the Brill Noether loci for general curves in the image of the forgetful map $\mc M_g(\mathbb F_m,(k,a))^{bir}\ra \mc M_g$. 
	\vskip 2 em
	By Theorem \ref{thm 1}, looking at normalisations of general curves in $\mathbb F_m$ of class $(k,a)$ and geometric genus $g$ gives a very limited selection of vectors $(e_1,\dots, e_{k-1})\in \mathbb N^{k-1}$ arising as scrollar invariants of such covers.
	\vskip 1 em
	Hoping to find whether the study of normalisations of nodal curves in $\mathbb F_m$ could provide some insight on Conjecture \ref{VV conj}, calling $\mc H_{g,(k,a)}$ the image of $\mc M_g(\mathbb F_m,(k,a))^{bir}\ra \mc H_{g,k}$, we then decided to look at curves that lie in some special subloci of $\mc H_{g,(k,a)}$.
	\vskip 1 em
	More precisely, let $C_1,\dots, C_u$ be general sections of $\pi:\mathbb F_m\ra \Proj$ and let for each $i\leq u$, $S_i\subseteq C_i$ be a general subset of $s_i$ points with $s_1\geq\dots\geq s_u$. Our goal will be to determine the scrollar invariants of those degree $k$ covers $\alpha:C\ra \Proj$ that factor as:
	\[
	\begin{tikzcd}
		C \arrow[r, "\nu"] \arrow{dr}[swap]{\alpha} & \Gamma\subseteq \mathbb F_m \arrow[d,"\pi"]\\
		& \Proj
	\end{tikzcd}
	\]
	where $\Gamma$ is a nodal curve having $S:=S_1\cup\dots \cup S_u$ as its singular locus. We first fix some notations. 
	\vskip 1 em
	Let $\sigma_k(i,j,\ell):=\sum_{i< t\leq j}\left(\mathrm{Max}\{(k-i)m+a-1-\ell,0\}-s_t\right)$ and define $n_1$ as the \nobreakdash{minimum} of $\left\{\ell \in \mathbb N\ |\ {\sigma_k(0,j,\ell)<0} \text{ for some $j\leq u$}\right\}$. Let $0=i_0\leq \dots\leq i_u$ so that each $i_t$ is the maximum between $i_{t-1}$ and $\mathrm{Min}\{j>i_{t-1}\ |\ \sigma_{k-i_{t-1}}(i_{t-1},j,n_1)<0\}$, let
	$r_1:=\mathrm{Max}\{i\ |\ n_1\geq im+a\}$ and define $\delta_1:=-\sum_{t\leq u}\sigma_{k-i_{t-1}}(i_{t-1},i_t,\ell)$.
	\begin{thm}\label{main thm}
		With the above notations, the scrollar invariants of the cover $\alpha:C\ra \Proj$ are
		\[
		e_i =\begin{cases}
			im+a & \text{if $i\leq r_1$}\\
			n_1 & \text{if  $r_1<i\leq \delta_1+r_1$}
		\end{cases}\]
		If $i>\delta_1+r_1$, one may compute the $e_i$-s by computing the last $k-1-\delta_1-r_1$ scrollar invariant of a curve of class $(k-i_u,a)$ having as singular locus $S_{i_u+1}\cup \dots \cup S_u$.
	\end{thm}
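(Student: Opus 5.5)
The plan is to read off the scrollar invariants from a generating function of dimensions of adjoint linear systems on $\mathbb F_m$, and then to compute those dimensions by peeling off the sections $C_t$ one at a time.

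\textbf{Reduction to adjoint systems.} By relative duality, $\alpha_*\omega_C\cong \omega_\Proj\oplus(\Tsch(\alpha)\otimes\omega_\Proj)$. Hence for every $n$
\[h^0(C,\omega_C\otimes\alpha^*\mcb O_\Proj(-n))=h^0(\mcb O_\Proj(-2-n))+\sum_{i\leq k-1}h^0(\mcb O_\Proj(e_i-2-n)).\]
The function $n\mapsto h^0(\omega_C(-n))$ therefore determines the $e_i$. Its second difference counts how many $e_i$ equal each given value.

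Since $\Gamma$ is nodal with singular locus $S$, the conductor is the ideal of $S$. Adjunction then gives
\[\nu_*\omega_C\cong \mc I_S\otimes\mcb O_{\mathbb F_m}(K_{\mathbb F_m}+\Gamma)|_\Gamma.\]
Using $h^1(\mcb O_{\mathbb F_m}(K))=h^2(\mcb O_{\mathbb F_m}(K))-1=0$ and the usual restriction sequence, we get, for $n\geq 0$,
\[h^0(\omega_C(-n))=h^0\big(\mathbb F_m,\mc I_S\otimes\mcb O_{\mathbb F_m}(k-2,\,a+m-2-n)\big),\]
possibly up to a correction of $1$ that is absorbed in the $\mcb O_\Proj(-2-n)$ term.

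Unconditionally, $H^0(\mcb O_{\mathbb F_m}(j,b))=\bigoplus_{i\leq j}H^0(\mcb O_\Proj(im+b))$. With no nodes this already recovers $e_i=im+a$, which matches the first line of the statement.

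\textbf{Peeling off sections.} A general section $C_t$ has class $(1,0)$, and $(j,b)\cdot C_t=jm+b$. If $D\in|(j,b)|$ passes through $S_t$ and $s_t>jm+b$, then $D\supseteq C_t$, so $D-C_t\in|(j-1,b)|$ must still pass through the remaining points.

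I will prove by induction on $u$ the following formula for $h^0(\mc I_S(j,b))$. Process the sections in the order $s_1\geq\dots\geq s_u$, each time decreasing $j$ as long as the cumulative deficit $\sigma$ is negative. Once this stops, the remaining points impose independent conditions. In that case $h^0$ equals the unconstrained count minus the number of remaining points. The indices $n_1$, $i_t$ and $\delta_1$ in the statement encode exactly the first $n$ at which some cumulative deficit $\sigma_k(0,j,n)$ becomes negative and the resulting number of forced components.

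Plugging this into the second difference shows three things:
\begin{enumerate}[label=$\roman*)$]
\item for $n<n_1$ the count is the unconstrained one minus $|S|$, which gives the invariants $im+a$ with $im+a\leq n_1$, i.e. $i\leq r_1$;
\item at $n=n_1$ the jump produces exactly $\delta_1$ new invariants equal to $n_1$;
\item for $n>n_1$ every curve contains $C_1+\dots+C_{i_u}$, so the count equals that for class $(k-i_u,a)$ with singular set $S_{i_u+1}\cup\dots\cup S_u$, which gives the recursive description of the remaining $e_i$.
\end{enumerate}

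\textbf{Main obstacle.} The hard step is the independence statement: once no further section is forced, the general points $S_t\subseteq C_t$ must impose independent conditions on the residual system $|(j',b')|$. Equivalently, $h^1(\mc I_{S'}(j',b'))=h^1(\mcb O(j',b'))$ must vanish appropriately.

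I would first try a Castelnuovo-type restriction sequence
\[0\ra \mcb O(j'-1,b')\ra\mcb O(j',b')\ra\mcb O_{C_t}(j'm+b')\ra 0,\]
and induct on the sections in order of decreasing $s_t$. General points on $C_t\cong\Proj$ impose independent conditions on the image of the restriction map as long as their number is at most its dimension. The delicate case is when a restriction map is not surjective, which happens when $b'$ is negative. Here the ordering $s_1\geq\dots\geq s_u$ and the precise definition of the $i_t$ should guarantee that the inductive inequalities hold, and I expect to need a careful case check at this point.

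Finally, I would verify that such a nodal $\Gamma$ with singular locus exactly $S$ exists, so that the cover $\alpha$ in the statement is defined. This should come from the existence result announced in the abstract, which I would invoke here.
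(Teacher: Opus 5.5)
Your reduction to the adjoint systems $H^0(\mc I_S(k-2,a+m-2-n))$ and the second-difference bookkeeping match the paper. The gap is in the interpolation step, which is where the content of the theorem lies. The theorem is governed by the \emph{cumulative} deficits $\sigma$. Your two tools only see \emph{individual} capacities:
\begin{enumerate}[label=$\roman*)$]
\item Bezout forces $C_t\subseteq D$ only when $s_t>jm+b$.
\item Restricting to $C_1$, passing to the residual class $(j-1,b)$ and continuing in decreasing order of $s_t$ gives independence only when $s_t\le (j-t+1)m+b+1$ for every $t$.
\end{enumerate}
That is exactly Lemma \ref{easy h0} and Proposition \ref{prop cop}, and the cumulative inequalities do not imply these individual ones.

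Here is a counterexample to your independence scheme. Take $b\ge 0$, $m\ge 2$, two general sections and $s_1=s_2=s$ with
\[(j-1)m+b+1<s\le jm+b+1-\lceil m/2\rceil .\]
Then $s\le jm+b+1$ and $2s\le (jm+b+1)+((j-1)m+b+1)$. The $2s$ points do impose independent conditions on $|\mcb O_{\mathbb F_m}(j,b)|$; this is a case of Lemma \ref{good lem} not covered by Lemma \ref{easy h0}. Your restriction to $C_1$ only gives
\[h^0(\mc I_S(j,b))\le (jm+b+1-s)+h^0(\mc I_{S_2}(j-1,b))=(jm+b+1-s)+h^0(\mcb O_{\mathbb F_m}(j-2,b)),\]
because $C_2$ is a fixed component of $|\mc I_{S_2}(j-1,b)|$. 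This exceeds the true value $h^0(\mcb O_{\mathbb F_m}(j,b))-2s$ by $s-((j-1)m+b+1)>0$. Reordering is useless since $s_1=s_2$. The loss comes from $H^0(\mc I_S(j,b))\to H^0(\mc I_{S_1,C_1}(jm+b))$ failing to be surjective because of the points on $C_2$. This has nothing to do with $b'<0$, so the case check you anticipate cannot close it.

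The same problem affects forced containment. For $m\ge 3$ and $s_1=s_2=jm+b$, the cumulative deficit $(jm+b+1)+((j-1)m+b+1)-2(jm+b)=2-m$ is negative, and every member of $|\mc I_S(j,b)|$ contains $C_1+C_2$. Yet neither $s_t$ exceeds $(j,b)\cdot C_t=jm+b$. So Bezout peeling does not detect the forced components counted by $n_1$, $i_t$ and $\delta_1$ either. Your text mentions the cumulative deficit, but the only forcing mechanism you give is Bezout on one section.

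What is missing is a genuinely joint interpolation statement for general points on several general sections. The paper gets it from Lemma \ref{good lem} and Corollary \ref{main coro}. In the basis adapted to the $u_1-h_iu_0$, the evaluation matrix becomes block lower-triangular as in (\ref{block triang}), with the rows of $S_t$ seeing only the first $t$ column blocks. Independence under the cumulative inequalities is proved by specialising $h_i=\lambda_it_0^m$ and exhibiting a nonzero leading coefficient of a maximal minor. The same block structure gives the rank drop $\sigma$ and the forced sections when an inequality fails, which is all the proof of the theorem uses.

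To stay closer to your restriction idea, do not peel $C_1$. Instead, add the points of one section to the system through all the others. If $Z'$ is disjoint from $C_t$, the kernel of $H^0(\mc I_{Z'}(j,b))\to H^0(C_t,\mcb O_{\mathbb F_m}(j,b)|_{C_t})$ is $H^0(\mc I_{Z'}(j-1,b))$. Hence $s_t$ general points of $C_t$ impose exactly $\min\{s_t,\ h^0(\mc I_{Z'}(j,b))-h^0(\mc I_{Z'}(j-1,b))\}$ conditions. This is an exact recursion, but you would still have to prove that it reproduces the $\sigma$, $i_t$, $\delta_1$ bookkeeping.

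Finally, the existence of $\Gamma$ is not part of this theorem, which concerns covers that do factor through such a $\Gamma$. Invoking Theorem \ref{final thm} would import hypotheses on $a$ that the statement does not have.
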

	\vskip 1 em
	Notice that, since through any $m+1$ points of $\mathbb F_m$ there exists a section of $\pi$ passing through them, we may assume that $s_i\geq m+1$ for each $i<u$.
	\vskip 1 em
	Notice also that, once one has proven that a general point $f:C\ra \Gamma\subseteq \mathbb F_m$ of $\mc M_g(\mathbb F_m,(k,a))^{bir}$ has $\Gamma$ nodal with $\mathrm{Sing}(\Gamma)$ general in $\mc Hilb^\delta(\mathbb F_m)$, we may obtain \ref{thm 1} as a corollary of Theorem \ref{main thm} by taking $s_i\leq m+1$ for each $i$.
	\vskip 1 em
	This theorem, although with different techniques, also generalises a result by M. Coppens (Theorem $1$ in \cite{coppens2021_scrollar}), where he establishes the result when $m=0$ and $u\leq k-1$.
	\vskip 2 em
	The last part of this note is then devoted to showing that, under some assumptions on $a$, such $k$-gonal curves actually exist.
	Again we first fix some notation. Let $\delta:=p_a(k,a)-g$ and define $\phi:\mc M_g(\mathbb F_m,(k,a))^{bir}\ra \mc Hilb^\delta(\mathbb F_m)$ to be the map that assigns to a morphism $\nu:C\ra \Gamma\subseteq \mathbb F_m$ the scheme of length $\delta$ $\mathrm{Sing}(\Gamma)$. Given $s_1\geq \dots\geq s_u$ such that $s_1+\dots+s_u=\delta$, $s_{u-1}\geq m+1$, consider:
	\[\mc H^{s_1,\dots,s_u}_{(1,0)}:=\{(Z,C_1,\dots,C_u)\in \mc Hilb^\delta(\mathbb F_m)\times |\mcb O_{\mathbb F_m}(1,0)|^u\ |\ Z\cap C_i\in \mc Hilb^{s_i}(C_i)\}\]
	and let $\mc M_g^{s_1,\dots, s_u}(\mathbb F_m,(k,a))\subseteq \mc M_g(\mathbb F_m,(k,a))^{bir}$ be the image under the second projection of $\mc H^{s_1,\dots,s_u}_{(1,0)}\times_{\mc Hilb^\delta(\mathbb F_m)}\mc M_g(\mathbb F_m,(k,a))^{bir}$.
	\begin{thm}\label{final thm}
		Let $k\geq 3$. Then, with the above notation, for any $a> 2(\big\lceil\frac{2u}{k}\big\rceil+1)s_1$, the moduli space
		$\mc M_g^{s_1,\dots, s_u}(\mathbb F_m,(k,a))$ is non empty. Moreover, a general point of $\mc M_g^{s_1,\dots, s_u}(\mathbb F_m,(k,a))$ corresponds to a map $f:C\ra \Gamma\subseteq \mathbb F_m$, where $\Gamma\in |\mcb O_{\mathbb F_m}(k,a)|$ is an irreducible nodal curve.
	\end{thm}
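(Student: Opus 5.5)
The plan is to construct the required nodal curves by degeneration: first build a reducible curve in $|\mcb O_{\mathbb F_m}(k,a)|$ whose singularities already include the prescribed general sets $S_i\subseteq C_i$ as nodes, then smooth all the other nodes while preserving the $\delta$ nodes at $S$. Concretely, fix general sections $C_1,\dots,C_u\in|\mcb O_{\mathbb F_m}(1,0)|$ and general $S_i\subseteq C_i$ with $|S_i|=s_i$. Group the indices $1,\dots,u$ into roughly $\lceil 2u/k\rceil$ blocks. For each block, choose a curve $D$ in a class $(k',a')$ with $k'<k$, together with a complementary curve $D'$, so that every point of $S_i$ for $i$ in the block lies on both $D$ and $D'$, and $D$, $D'$ meet transversally there. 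Taking unions/products of such curves produces $\Gamma_0\in|\mcb O_{\mathbb F_m}(k,a)|$ which is nodal at all of $S$ (plus additional nodes) and whose total class matches $(k,a)$. The hypothesis $a>2(\lceil 2u/k\rceil+1)s_1$ is exactly what should guarantee that each auxiliary curve class has enough sections to pass through $s_1$ prescribed points with free tangent data. This means $h^0$ of the relevant twist by the ideal of the points surjects onto the jets, using that curves of type $(1,b)$ restricted to a section $C_i\cong\Proj$ have degree $b$, and $b\geq s_i$ allows interpolating.

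Second, I will do a deformation count. Let $V\subseteq|\mcb O_{\mathbb F_m}(k,a)|$ be the locus of curves singular at every point of $S$. Since nodes impose at most $3$ conditions each, $V$ is cut out by $3\delta$ linear conditions. I will show these conditions are independent, i.e. $H^1(\mathbb F_m,\mc I_{S}^2(k,a))=0$. I plan to prove this by restricting successively to the sections $C_i$: use the sequences
\[0\ra \mc I_{S'}^2(k-1,a)\ra \mc I_{S}^2(k,a)\ra (\text{restriction to }C_i)\ra 0\]
and the analogous sequences with doubled sections. The restriction to $C_i$ is a line bundle of degree roughly $km+a-2s_i$, minus a contribution from the double points, and the positivity of $a$ relative to $s_1$ makes these $H^1$'s vanish. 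Iterating over the blocks uses the factor $\lceil 2u/k\rceil$, since each section must be used twice to kill the square of the ideal and only $k$ units of the $\zeta$-direction are available.

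Third, I will smooth the extra nodes. Let $N$ be the set of nodes of $\Gamma_0$ not in $S$. The same vanishing, applied to $\mc I_S^2\otimes\mc I_N$ versus $\mc I_S^2$, shows that a general member of $V$ near $\Gamma_0$ smooths each node in $N$ independently while keeping exactly the nodes at $S$. The node type at $S$ is open, since transversality of the branches is preserved, so a general $\Gamma\in V$ is nodal with $\mathrm{Sing}(\Gamma)=S$. Irreducibility follows because the smoothing connects all components: each pair of components in $\Gamma_0$ meets in at least one point of $N$, since $(1,0)\cdot(1,0)=m$ and the $(0,1)$ intersections are positive, and smoothing such a node merges them. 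Genus count then gives $g=p_a(k,a)-\delta$, and the normalisation yields a point of $\mc M_g^{s_1,\dots,s_u}(\mathbb F_m,(k,a))$. Generality of $(S,C_i)$ in $\mc H^{s_1,\dots,s_u}_{(1,0)}$ shows that a general point of that space is of this form.

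The main obstacle I expect is the cohomology vanishing $H^1(\mc I_S^2(k,a))=0$ (and its version with $\mc I_N$). Points on one section $C_i$ are highly special: $s_i$ may exceed $m+1$, so fat points on $C_i$ can fail to impose independent conditions. The residual/Castelnuovo-type argument must carefully account for how often $C_i$ is peeled off, and this bookkeeping is what the constant $2(\lceil 2u/k\rceil+1)$ must cover. I will try first a Horace-type induction, peeling each $C_i$ twice and distributing the $k$ copies of $\zeta$ across sections in groups of size about $k/2$.
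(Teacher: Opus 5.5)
The main obstacle is not your Step 2 but Step 1, and as described Step 1 fails. Your overall plan is to degenerate to a reducible $\Gamma_0$ nodal at $S$, prove $h^1(\mc I_S^2(k,a))=0$, and smooth the remaining nodes. The paper instead applies Bertini directly to $\Sigma=|\mc I_{S^{(1)}}(k,a)|$, using $\Sigma\supseteq\Sigma_1+\Sigma_2+\sum L_i$, where $\Sigma_1=|\mc I_S(\lceil k/2\rceil-1,b)|$, $\Sigma_2=|\mc I_S(\lfloor k/2\rfloor+1,b)|$, $b=(\lceil 2u/k\rceil+1)s_1$, and the $L_i$ are $a-2b$ fibres.

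Suppose each of your $\lceil 2u/k\rceil$ blocks gets its own pair $D_B,D'_B$ meeting transversally at the block's points. Then the classes add up. Each pair has $\zeta$-degree at least $1$, since two unions of fibres through a point share that fibre, so you exceed $\zeta$-degree $k$ as soon as $\lceil 2u/k\rceil>k$. Taking $D'_B$ to be fibres instead costs $\delta=\sum s_i$ in the $\xi$-direction, which the hypothesis does not allow: with $k\ge 5$, $s_i=s_1$ and $u\gg k$ one has $\delta=us_1$, while $a$ need only exceed about $4us_1/k+2s_1$. The configuration the bound is tailored to is a single pair $C_0\in\Sigma_1$, $C_0'\in\Sigma_2$, each passing through all of $S$, plus fibres. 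The number $\lceil 2u/k\rceil$ is not a number of blocks. It is the size of each of the roughly $k/2$ groups into which Lemma \ref{main lem2} specialises the $u$ sections (making them coincide), so that each coincident section carries at most $\lceil 2u/k\rceil s_1$ points. Moreover, the properties you need of $D,D'$ — smoothness at $S$, no base points off $S$, transversality at $S$ — are the nontrivial content (cf.\ Lemma \ref{main lem2} and Corollaries \ref{step 2}, \ref{smooth coro}). Your ``$h^0$ surjects onto the jets'' is asserted, not proved, and $(1,b)\cdot(1,0)=m+b$, not $b$.

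Steps 2 and 3 also need repair.

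In Step 2, the residual of $S^{(1)}$ with respect to $C_i$ is $(S')^{(1)}\cup S_i$, not $(S')^{(1)}$. You also cannot peel all $u$ sections twice once $2u>k+1$. A peeling argument along the $C_i$ only works after letting the sections coincide in about $k/2$ groups and invoking semicontinuity. The paper instead uses one residual sequence with respect to a smooth $C_0\in\Sigma_1$ together with non-speciality on $C_0$ (Lemma \ref{step 1}).

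In Step 3, $h^1(\mc I_{S^{(1)}\cup N}(k,a))=0$ is not ``the same vanishing'' but a new statement needing its own proof. It is also more than you need: a node $p\in N$ disappears on the general member of $V$ near $\Gamma_0$ as soon as $p$ is not a base point of $V$. That is exactly what the paper gets from $\Sigma_1,\Sigma_2$ having no base points off $S$.

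For irreducibility, you need connectivity of the graph whose vertices are the components of $\Gamma_0$ and whose edges are the nodes in $N$. It is not true that every two components meet: fibres are pairwise disjoint, and so are sections when $m=0$. For $\Gamma_0=C_0\cup C_0'\cup\sum L_i$ this graph is connected. So with that configuration, and with transversality of $C_0,C_0'$ at $S$ established, your smoothing argument becomes viable. The paper sidesteps it by using Kleiman's Bertini theorem for irreducibility and Corollary \ref{2nd order} for ordinary nodes at $S$.
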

	If one sets $m=0$, the above result improves Lemma $1$ in \cite{bal02} both by allowing $u\geq k-1$ and by showing the existence of a nodal curve of class $(k,a)$ in $\Proj\times \Proj$ having singular locus as prescribed above with $a<2us_1+2$.
	\vskip 1 em
	This improvement then allows us to also establish the following result, which proves what E. Ballico conjectured in Remark $3$ of \cite{bal02}.
	\begin{cor}\label{coro P^1}  
		Let $d\in \mathbb Z$ and let $g> 6d(k-1)$. Then for any sequence
		$e_1\leq\dots\leq e_{k-1}=e_1+d$ such that $\sum_{i\leq k-1}e_i=g+k-1$, there exists a degree $k$ cover $\alpha:C\ra \Proj$ from a smooth genus $g$ curve having $e_1,\dots,e_{k-1}$ as scrollar invariants.
	\end{cor}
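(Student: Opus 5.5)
I plan to deduce Corollary \ref{coro P^1} from Theorem \ref{main thm} and Theorem \ref{final thm} with $m=0$, i.e.\ working in $\mathbb F_0=\Proj\times\Proj$. There $(k,a)$ curves have $p_a=(k-1)(a-1)$. A nodal curve with prescribed singular locus on horizontal sections $C_1,\dots,C_u$ (curves of class $(1,0)$) has scrollar invariants determined by the $s_t$, as in Theorem \ref{main thm}. For $m=0$ the first branch gives $e_i=a$ for $i\le r_1$. The invariants then drop to $n_1=a-1-\ell$ levels as the points on the sections are used up.

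The first step is to specialise the recursive description in Theorem \ref{main thm} to the configuration $s_1=\dots=s_u=s$. We want the output to be $e_1=\dots=e_{k-1-u}$ and $e_{k-u}=\dots=e_{k-1}$, or some similar two-step shape. The motivating example is Coppens' case $m=0$, $u\le k-1$: there each section carrying $s$ nodes lowers one scrollar invariant by $s$ from $a$. So the invariants should be $a$ with multiplicity $k-1-u$ and $a-s$ with multiplicity $u$. I will verify this from the definitions of $n_1$, $r_1$, $\delta_1$ and $i_t$: with $m=0$, $\sigma_k(0,j,\ell)=j(a-1-\ell-s)$, so $n_1=a-s$ and $r_1=0$. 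The bookkeeping of $\delta_1$ should then show that $u$ invariants equal $a-s$. The recursion reduces to a curve of class $(k-u,a)$ with no nodes, whose invariants are all $a$.

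Given a target sequence $e_1\le\dots\le e_{k-1}=e_1+d$, I need more flexibility than two values. I will therefore let the sections carry different numbers of nodes, $s_1\ge\dots\ge s_u$, with $u\le k-1$. The natural guess is $e_{k-i}=a$ and $e_i=a-s_{u+1-i}$ after reindexing, i.e.\ the invariants are $a-s_t$ for $t\le u$ together with $a$ repeated $k-1-u$ times. I then choose $a=e_{k-1}$ and $s_t=e_{k-1}-e_t$ for the indices $t$ with $e_t<e_{k-1}$, so that $u\le k-2$ and $s_1\le d$. The genus is then automatically matched: $\sum e_i=g+k-1$ is equivalent to $g=(k-1)(a-1)-\sum s_t$, because $\sum e_i=(k-1)a-\delta$.

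Next I check the hypotheses of Theorem \ref{final thm}. Since $u\le k-1$, we have $\lceil 2u/k\rceil\le 2$, so the condition $a>6s_1$ suffices, and $a>6d$ suffices. Now $a=e_{k-1}\ge (g+k-1)/(k-1)>6d$ once $g>6d(k-1)$. The condition $s_{u-1}\ge m+1=1$ is automatic.

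Theorem \ref{final thm} then produces an irreducible nodal $\Gamma$ with the prescribed singular locus on general sections. After possibly moving to a general point of $\mc M_g^{s_1,\dots,s_u}$, Theorem \ref{main thm} applies, and its normalisation gives the required cover.

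The main obstacle is the first step: carefully unwinding the indices $i_t$, $n_1$ and $\delta_1$ in Theorem \ref{main thm} to confirm that unequal $s_t$ really yield invariants $a-s_t$. The recursion may instead force some averaging, analogous to the balancing in Theorem \ref{thm 1}. If it does, I would restrict to sections grouped by equal $s_t$ values and recover arbitrary sequences by iterating the recursion group by group. A secondary point is the generality required in Theorem \ref{main thm} (general sections, general points), which Theorem \ref{final thm} should provide for a general member of the family.
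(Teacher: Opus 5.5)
Your proposal is correct and follows essentially the paper's route. With $m=0$, you take $a=e_{k-1}$ and put $s_t=e_{k-1}-e_t$ nodes on each of $u\le k-2$ general horizontal sections. Theorem \ref{final thm} then gives the irreducible nodal curve, since its hypothesis reduces to $a>6d$, which is exactly what $g>6d(k-1)$ guarantees. You then read off the invariants as in Coppens' theorem, which is what the paper means by arguing as in \cite{coppens2021_scrollar}.

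The step you flag as the main obstacle is harmless. For $u\le k-1$ distinct sections of $\Proj\times\Proj$ one has $h^0(\mc I_S(k-2,b))=(k-1-u)(b+1)+\sum_t\mathrm{Max}\{b+1-s_t,0\}$ for $b\geq -1$. Hence $f^S_{k,a}(n)=k-1-u+\#\{t\ |\ s_t\leq a-1-n\}$ for $n\leq a-1$, and $e_t=a-s_t$ exactly. There is no averaging, and no generality of the points or sections is needed.
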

	Combining Theorem \ref{main thm} with Theorem \ref{final thm} with $m>0$ we were also able to find some more points of $\mc P_k$ that are not given by Corollary \ref{coro P^1}. Unfortunately, we were unable to find a way of getting subsets of positive measure.
    \begin{akn} This research has been carried out as part of the DFG Research Training Group 2965 "From geometry to numbers" involving Humboldt-Universität zu Berlin and Leibniz Universität Hannover. This work has also been funded by the Deutsche Forschungsgemeinschaft (DFG, German Research
    Foundation) under Germany's Excellence Strategy – The Berlin Mathematics
    Research Center MATH+ (EXC-2046/1, EXC-2046/2, project ID: 390685689).\\ 
    I wish to thank my advisor Gavril Farkas for his guidance and support. I would also like to thank Bogdan Carasca, Andreas Kretschmer and Irene Spelta for many insightful conversations.
    \end{akn}

	\section{BACKGROUND ON HIRZEBRUCH SURFACES}
	By the projective bundle formula, (Theorem 3.3 in \cite{fulton}), the Chow ring of $\mathbb F_m$ is isomorphic to
	\begin{equation}\label{Chow ring}
		\frac{\mathbb Z[\zeta,\xi]}{(\xi^2,\zeta^2-m\xi\zeta)}
	\end{equation}
	Thus, with this notation, the class of the directrix of $\mathbb F_m$ is $[\Delta]:=\zeta-m\xi$. With this notation, its canonical class is $K=(m-2)\xi-2\zeta$. By adjunction it then follows that the virtual genus of a divisor $D$ of type $(k,a)$ is $p_a(D)=\binom{k}{2}m+(k-1)(a-1)$.
	\begin{oss}\label{rmk 2}
		Since $\pi_*(\mcb O_{\mathbb F_m}(n))\cong \mc Sym^n(\mc E_m)$, we may identify global sections of $\mcb O_{\mathbb F_m}(k,a)$ with those of $\mc Sym^k(\mc E_m)\otimes \mcb O_{\Proj}(a)$. Choosing $u_0$, $u_1$ as relative coordinates on $\mathbb F_m$ and $t_0$, $t_1$ as coordinates on $\Proj$, one may identify global sections of $\mcb O_{\mathbb F_m}(k,a)$ with polynomials of the form:
		\[\sum_{im+a\geq 0} f_i(t_0,t_1)u_0^iu_1^{k-i}, \qquad \text{where $f_i\in H^0(\Proj,\mcb O_{\Proj}(im+a))$}\]
		Under this identification, the directrix corresponds to the vanishing locus of $u_0$, while the other sections of $\pi$ correspond to vanishing loci of polynomials of the form:
		\[{u_1-h(t_0,t_1)u_0},\ \ \ \text{ with $\mathrm{deg}(h)=m$}\]
	\end{oss}
	\begin{oss}
		Any irreducible effective divisor $D$ of type $(k,a)$ comes endowed with a degree $k$ cover $f:D\ra \Proj$ given by restricting the projection $\pi:\mathbb F_m\ra \Proj$.
	\end{oss}
	Finally, we shall denote by ${\mc M}_{g}(\mathbb F_m,(k,a))$ the moduli stack parametrising morphisms\\ $f:C\ra \mathbb F_m$ such that $f_*[C]=(k,a)\in A^1(\mathbb F_m)$. Notice that, calling $\mc S\subseteq |\mcb O_{\mathbb F_m}(k,a)|$ the Severi variety parametrising genus $g$ irreducible nodal curves, and calling $\mathfrak C$ the universal curve over $\mc S$, since families of nodal curves are equinormalisable, remembering the normalisation gives a morphism $\mc S\ra \mc M_g$. Calling $\mcb C$ the pullback of the universal family of $\mc M_g$ to $\mc S$ one has a commutative diagram:
	\[\begin{tikzcd}
		\mcb C \arrow[d,] \arrow[r,"\mcb f"]& \mathbb F_m\times \mc S \arrow[d,]\\
		\mathfrak C \arrow[ur,hook,]  \arrow[r,"\mcb p",swap] & \mc S
	\end{tikzcd}\]
	This then induces an injective morphism $\mcb s: \mc S\ra \mc M_g(\mathbb F_m,(k,a))$.
	\begin{oss}
		Notice that, since by Proposition $2.7$ of \cite{christ2023severi} the Severi variety parametrising irreducible curves of genus $g$ and class $(k,a)$ in $\mathbb F_m$ is irreducible of the expected dimension, by Zariski's Theorem (Corollary 5.3 in \cite{christ2023severi}), the same is true for $\mc S$.
	\end{oss}
	Now, as shown for example in the proof of Proposition $2.7$ of \cite{christ2023severi}, locally around each $f:C\ra \mathbb F_m$ in the image of $\mcb s$ one has a morphism to $\ol{\mc S}$ with finite fibres. In particular, there is a unique component of $\mc M_g(\mathbb F_m,(k,a))$ containing the image of $\mcb s$. We shall denote this component by $\mc M_g(\mathbb F_m,(k,a))^{bir}$. Note in passing that, since the expected dimension of $\mc M_g(\mathbb F_m,(k,a))$ is the same as the one for $\mc S$, this component will have the expected dimension.

	\section{INTERPOLATING POINTS IN $\mathbb F_m$}
	Given a nodal curve $\Gamma\subseteq \mathbb F_m$, calling $\nu:C\ra \Gamma$ the normalisation and $f:=\pi\circ \nu$, the scrollar invariants of $C$ are the unique integers $0\leq e_1\leq \dots\leq e_{k-1}$ such that:
	\[\Tsch(f):={f_*\mcb O_C}/{\mcb O_{\Proj}}\cong \bigoplus_{i=1}^{k-1}\mcb O_{\Proj}(e_i)\]
	In particular, calling $\mc A:=f^*\mcb O_{\Proj}(1)$, $e_i=\mathrm{Min}\{n\in \mathbb N\ |\ h^0(C,\mc A^{\otimes n})-h^0(C,\mc A^{\otimes n-1})>i\}$. By Riemann-Roch, this is equivalent to \[e_i=\mathrm{Min}\left\{n\in \mathbb N\ |\ h^0(C,\omega_C\otimes\mc A^{\otimes 1-n})-h^0(C,\omega_C\otimes \mc A^{\otimes -n})<k-i\right\}\]
	Now, since $\Gamma$ is a nodal curve, if one denotes by $S$ its singular locus, one has that the conductor ideal of $\mcb O_\Gamma$ in $\mcb O_C$ is induced by the ideal sheaf of $S$ in $\Gamma$, which we shall henceforth denote by $\mc I_{S,\Gamma}$. Hence $\nu_*\omega_C\cong \omega_\Gamma\otimes \mc I_{S,\Gamma}$ and, denoting by $\mc I_S$ the ideal sheaf of $S$ in $\mathbb F_m$, one has that
	\begin{equation}\label{h0 interp}
		0\ra \mcb O_{\mathbb F_m}(-2,m-2-n)\ra \mc I_S(k-2,m+a-2-n)\ra \nu_*(\omega_C\otimes \mc A^{\otimes -n})\ra 0
	\end{equation}
	By Leray's spectral sequence, $h^1(\Proj,\mcb O_{\mathbb F_m}(-2,m-2-n))=0$ for each $n\geq 0$. Thus \[H^0(C,\omega_C\otimes \mcb A^{\otimes -n})\cong H^0(\mathbb F_m,\mc I_S(k-2,a+m-2-n))\]
	If one now calls $f^S_{k,a}(n):=h^0(C,\omega_C\otimes \mcb A^{\otimes 1-n})-h^0(C,\omega_C\otimes \mcb A^{\otimes -n})$, one has that
	\[e_i=\mathrm{Min}\left\{n\in \mathbb N\ |\ f^S_{k,a}(n)<k-i\right\}\]
	The problem of computing the scrollar invariants of nodal curves in $\mathbb F_m$ having singular locus $S$ can thus be reduced to that of finding how many independent conditions $S$ imposes on the space of forms of bidegree $(k-2,a+m-2-n)$ on $\mathbb F_m$. If the points are general one readily has:
	\begin{lem}\label{interpolating general points}
		Let $S$ be a collection of $s$ general points of $\mathbb F_m$. Then, if $s\leq h^0(\mathbb F_m,\mcb O_{\mathbb F_m}(k,a))$, passing through $S$ imposes $s$ independent conditions on $H^0(\mathbb F_m,\mcb O_{\mathbb F_m}(k,a))$. 
	\end{lem}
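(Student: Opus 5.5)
The plan is the standard inductive argument for general points, which needs nothing specific to $\mathbb F_m$ beyond finite-dimensionality of $V:=H^0(\mathbb F_m,\mcb O_{\mathbb F_m}(k,a))$. I would argue by induction on $s$, proving that for general $p_1,\dots,p_s$ with $s\leq \dim V$ the subspaces
\[V_j:=\{f\in V\ |\ f(p_1)=\dots=f(p_j)=0\}\]
satisfy $\dim V_j=\dim V-j$ for every $j\leq s$. Here "passing through $p$" means the linear condition obtained from evaluating a section at $p$ after fixing a local trivialisation of $\mcb O_{\mathbb F_m}(k,a)$. The vanishing of a section at $p$ does not depend on the trivialisation chosen.

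For the inductive step, suppose $V_j$ has dimension $\dim V-j\geq 1$ for general $p_1,\dots,p_j$, where $j<s\leq\dim V$. Choose a nonzero $f\in V_j$. Its zero locus is a proper closed subset of $\mathbb F_m$, since $\mathbb F_m$ is irreducible and $f\neq 0$. Any $p_{j+1}$ outside this zero locus therefore gives a condition that is not identically satisfied on $V_j$, so $\dim V_{j+1}=\dim V_j-1$.

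The genericity statement then follows from semicontinuity. The locus of tuples $(p_1,\dots,p_s)\in\mathbb F_m^s$ where the evaluation map $V\to\bigoplus_i \mcb O(k,a)|_{p_i}$ has rank $s$ is open, because it is the non-vanishing of some $s\times s$ minor in local trivialisations. The inductive step shows this locus is non-empty, and $\mathbb F_m^s$ is irreducible, so the locus is dense. Hence $s$ general points impose $s$ independent conditions.

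I expect no real obstacle. The only points needing care are that $\dim V$ is finite and that $V\neq 0$ whenever the inequality $s\leq h^0$ is non-vacuous. Both follow from Remark \ref{rmk 2}: $V$ is the space of polynomials $\sum f_iu_0^iu_1^{k-i}$ with $f_i\in H^0(\Proj,\mcb O_\Proj(im+a))$, so $\dim V=\sum_{im+a\geq 0}(im+a+1)$. The degenerate case $\dim V=0$ forces $s=0$, where the claim is trivial.
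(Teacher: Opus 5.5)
Your argument is correct, but it takes a different route from the paper's. Both proofs reduce, via openness of the maximal-rank locus in the irreducible variety $(\mathbb F_m)^s$, to exhibiting a single tuple $D$ for which $ev_D$ has maximal rank. They differ in how that tuple is found.

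The paper writes $ev_D$ in the monomial basis $t_0^\ell t_1^{jm+a-\ell}u_0^ju_1^{k-j}$. It then treats an $s\times s$ minor as a polynomial in the Cox coordinates of the $s$ points, which are algebraically independent in the Cox ring of $(\mathbb F_m)^s$. Because the columns are distinct monomials, the minor is a nonzero alternant, so its vanishing locus is a proper divisor.

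Your induction needs no toric input. It uses only that $V$ is finite-dimensional and that a nonzero section of a line bundle on $\mathbb F_m$ cannot vanish at every point. Strictly, the property doing the work there is that $\mathbb F_m$ is integral, and in particular reduced, rather than merely irreducible. Your argument works verbatim for any line bundle on any integral projective variety. It also gives Remark \ref{good remark} for free: once $\dim V$ general points have cut $V$ down to $0$, the evaluation map is injective.

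The paper's explicit-matrix viewpoint is not needed for this lemma but pays off later. In Lemma \ref{good lem} and Corollary \ref{main coro} the points are constrained to lie on prescribed sections $C_i$. There, ``choose the next point off the zero locus of some nonzero $f\in V_j$'' breaks down exactly when every $f\in V_j$ vanishes identically on $C_i$, which is what happens when (\ref{star}) fails. The monomial evaluation matrix and its block form (\ref{block triang}) are the tool the paper uses to control that situation.
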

	\begin{proof}
		Let $D=(p_1,\dots,p_s)\in (\mathbb F_m)^s$ and define:
		\[ev_D: H^0(\mathbb F_m,\mcb O_{\mathbb F_m}(k,a))\ra \mcb k^{\oplus s}\ |\ f\mapsto (f_{p_i})_{i=1}^s\]
		Let $\mc K_s$ be the incidence correspondence:
		\[\{(\mbd v, D)\in \Pb H^0(\mathbb F_m,\mcb O_{\mathbb F_m}(k,a)\times (\mathbb F_m)^s\ | \mbd v\in Ker(ev_D)\}\] and let $p:\mc K_s\ra (\mathbb F_m)^s$ be the second projection. Notice that $p$ is clearly surjective. Since the dimension of a fibre is always greater or equal than $h^0(\mathbb F_m,\mcb O_{\mathbb F_m}(k,a))-s$, by semicontinuity of the fibre dimension, we need only to exhibit one point $D\in (\mathbb F_m)^s$ for which $ev_D$ has maximal rank.\\
		Now choose $\{t_0^\ell t_1^{jm+a-\ell}u_0^ju_1^{k-j}\}_{j,\ell}$ as basis for $H^0(\mathbb F_m,\mcb O_{\mathbb F_m}(k,a))$. With respect to this basis $ev_D$ is represented by a matrix $E$ whose $i$-th row is given by:
		\[(t_0(p_i)^\ell t_1(p_i)^{jm+a-\ell}u_0(p_i)^ju_1(p_i)^{k-j})_{j,\ell}\]
		Thus, saying that $ev_D$ is surjective is equivalent to asking that there exists a non-zero $s\times s$ minor of $E$. Now notice that, denoting by $\Delta_{\mathbb F_m}$ the fan giving rise to $\mathbb F_m$, since by \cite{fultonToric} $(\mathbb F_m)^s$ is a toric variety given by the following fan $\prod_{i=1}^s\Delta_{\mathbb F_m}$, by \cite{coxRing}, $\{t_0(p_i),\ t_1(p_i),\ u_0(p_i),\ u_1(p_i)\}_{i=1}^s$ are algebraically independent in the Cox Ring of $(\mathbb F_m)^s$. In particular, since no two monomials on the same row are the same, the vanishing of any $s\times s$ minor of $E$ cuts out a divisor on $(\mathbb F_m)^s$. This implies that, as long as $s\leq h^0(\mcb O_{\mathbb F_m}(k,a))$, there is an open $U$ in $(\mathbb F_m)^s$ such that for each $D\in U$, $ev_D$ is surjective.
	\end{proof}
	\begin{oss}\label{good remark}
		Notice that the previous proof shows more generally that for a general tuple of points $D=(p_1,\dots,p_s)\in (\mathbb F_m)^s$, the map $ev_D$ has maximal rank. 
	\end{oss}
	If instead the points are contained in some sections of $\mathbb F_m\ra \Proj$, one may generalise the techniques described in the proof of Lemma $3$  in \cite{coppens2021_scrollar} to prove the following lemma. Even though the techniques are very similar to those of \cite{coppens2021_scrollar}, we include the proof as the lemma will be useful in the proof of Theorem \ref{final thm}.
	\begin{lem}\label{easy h0}
		Let $C_0,\dots,C_k$ be distinct sections of $\pi:\mathbb F_m\ra \Proj$ and let $\{D_i\subseteq C_i\}_{i=0}^k$ be a collection of effective divisors of degrees $s_0,\dots,s_k$ such that for each $i$, \[s_i\leq \mathrm{Min}\{s_{i+1},\mathrm{Max}\{im+a+1,0\}\}\] and for each $i,j$, $D_i\cap C_j=\emptyset$. Then, calling $Z$ the $0$-dimensional subscheme of $\mathbb F_m$ given by their union and $\mc I_Z$ its ideal sheaf, 
		\[h^0(\mathbb F_m, \mc I_Z(k,a))=h^0(\mathbb F_m,\mcb O_{\mathbb F_m}(k,a))-\sum_{i=0}^ks_i\]
	\end{lem}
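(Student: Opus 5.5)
Since $h^0(\mc I_Z(k,a))\geq h^0(\mcb O_{\mathbb F_m}(k,a))-\sum_i s_i$ always holds, I will only prove the reverse inequality. I will do it by induction on $k$, using the residual exact sequence of the last section $C_k$. Each $C_i$ is a section of class $(1,0)$: in the coordinates of Remark \ref{rmk 2} it is cut out by $u_1-h_iu_0$ with $\deg h_i=m$. Let $Z'$ be the union of $D_0,\dots,D_{k-1}$. Because $D_k\subseteq C_k$ and $D_i\cap C_k=\emptyset$ for $i<k$, the residual of $Z$ with respect to $C_k$ is exactly $Z'$. This gives
\[0\ra \mc I_{Z'}(k-1,a)\xrightarrow{\cdot C_k} \mc I_Z(k,a)\ra \mcb O_{C_k}(k,a)(-D_k)\ra 0.\]
Since $C_k\cong\Proj$, the last sheaf is $\mcb O_\Proj(km+a-s_k)$, because $(1,0)\cdot(k,a)=km+a$ by the Chow ring (\ref{Chow ring}). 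Taking global sections,
\[h^0(\mc I_Z(k,a))\leq h^0(\mc I_{Z'}(k-1,a))+h^0(\Proj,\mcb O(km+a-s_k)).\]

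\textbf{Induction step.} The hypotheses on $(D_0,\dots,D_{k-1})$ with class $(k-1,a)$ are the same inequalities as before, with indices below $k$ unchanged. So induction gives $h^0(\mc I_{Z'}(k-1,a))=h^0(\mcb O(k-1,a))-\sum_{i<k}s_i$. It remains to compare with
\[h^0(\mcb O(k,a))-h^0(\mcb O(k-1,a))=h^0(\Proj,\mcb O(km+a)),\]
which is the top graded piece of $\mathrm{Sym}^k(\mc E_m)(a)$, i.e. $\max\{km+a+1,0\}$. There are two cases for the last divisor.
\begin{itemize}
\item If $km+a+1\geq 0$ and $s_k\leq km+a+1$, then $h^0(\mcb O_\Proj(km+a-s_k))=km+a+1-s_k$, and the bound closes.
\item If $km+a+1<0$, the hypothesis forces $s_k\leq 0$, so $s_k=0$. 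The term then contributes $0$ on both sides, because $h^0(\mcb O(km+a))=0$.
\end{itemize}

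\textbf{Subtlety in the hypothesis.} As written, the hypothesis constrains $s_i$ by $\min\{s_{i+1},\dots\}$ for every $i$. Read literally for $i=k$ this involves the undefined $s_{k+1}$, so I interpret it as $s_k\leq\max\{km+a+1,0\}$. The monotonicity $s_i\leq s_{i+1}$ is used in a further way. When $im+a+1<0$ the hypothesis gives $s_i=0$, and then monotonicity forces all earlier $s_j$ to vanish. Such indices occur only when $a<0$, so that $im+a$ increases with $i$. This guarantees that the induction hypothesis stays valid after the degrees are lowered.

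\textbf{Base case.} This is $k=0$, class $(0,a)$, i.e. $\pi^*\mcb O_\Proj(a)$, with the single divisor $D_0\subseteq C_0$ of degree $s_0\leq\max\{a+1,0\}$. Restricting to $C_0\cong\Proj$ identifies $H^0(\mcb O(0,a))$ with $H^0(\Proj,\mcb O(a))$, and $s_0\leq a+1$ points impose independent conditions there.

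\textbf{Main obstacle.} The only delicate point is whether the restriction $H^0(\mc I_Z(k,a))\ra H^0(\mcb O_{C_k}(km+a-s_k))$ must be surjective. It need not be: the inequality direction above only uses left-exactness plus a dimension count, which suffices. I expect the main care to be in checking the residual computation — that $D_i\cap C_j=\emptyset$ really makes $\mathrm{Res}_{C_k}Z=Z'$ scheme-theoretically — and in handling the degenerate negative-degree indices consistently.
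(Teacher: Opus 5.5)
Your proof is correct and follows the paper's strategy: induction on $k$ by peeling off the last section $C_k$. In both arguments the sections vanishing on $C_k\cup Z'$ are identified with $H^0(\mc I_{Z'}(k-1,a))$, and the count on $C_k\cong\Proj$ uses $s_k\leq \max\{km+a+1,0\}$. The difference is bookkeeping: the paper first proves that $H^0(\mc I_{Z'}(k,a))\ra H^0(\mcb O_{C_k}(k,a))$ is surjective, which needs an auxiliary induction on $\nu(Z)$ and a preliminary reduction to $a\geq 0$. Your residual sequence for $\mc I_Z$, combined with the trivial lower bound, avoids both, and never uses $s_i\leq s_{i+1}$, so the monotonicity remark in your ``subtlety'' paragraph can be dropped.
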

	\begin{proof}
		Note first that, since each of the $C_i$ is a smooth curve, there is no ambiguity when talking about a length $n$ subscheme of $C_i$ supported at a given point. Hence, this lemma also computes the number of parameters of curves satisfying a tangency condition with respect to $C_i$ at a given point.
		\vskip 1 em
		Since global sections of $\mcb O_{\mathbb F_m}(k,a)$ correspond to polynomials of the form $p=\sum_{i=0}^kf_i(t_0,t_1)u_0^iu_1^{k-i}$ with $f_i$ homogeneous of degree $im+a$ (and polynomials of negative degree corresponding to $0$), if $-im\leq a < -(i-1)m$, sections of $H^0(\mathbb F_m,\mcb O_{\mathbb F_m}(k,a))$ are of the form $u_0^{i}q$ with ${q}$ a global section of $\mcb O_{\mathbb F_m}(k-i,im+a)$. Since, by assumption, one also has that $s_j=0$ for each $j<k-i$, we need only to prove the statement for $a\geq 0$.
		\vskip 1 em
		The proof will proceed by induction on $k$. If $k=0$, any divisor in $|\mcb O_{\mathbb F_m}(0,a)|$ corresponds to a sum of $a$ fibres of $\pi$. Hence, since $s_0\leq a+1$, asking for it to pass through $s_0$ points imposes independent conditions on $H^0(\mathbb F_m,\mcb O_{\mathbb F_m}(0,a))$.\\
		Note that, even when some of the points appear with multiplicity $>1$, they still impose independent conditions on the lines as vanishing with multiplicity $n$ on a given point requires $n$ fibres to coincide.
		\vskip 1 em
		Now let $k\geq 1$ and assume that the statement holds for $k-1$. Define ${\nu(Z)}:=\mathrm{Max}(i\ |\ s_i\neq 0)$. If $\nu(Z)=0$, there is nothing to prove. Assuming now for the statement to hold for each $\nu'<\nu(Z)$, let $Z'$ be the susbcheme defined by the union of the $D_i$-s with $i<k$. By construction, $\nu(Z')<\nu(Z)$. Hence, by induction, one has that $h^0(\mathbb F_m,\mc I_{Z'}(k,a))=h^0(\mathbb F_m,\mcb O_{\mathbb F_m}(k,a))-\sum_{i<k}s_i$. Choosing now $\Gamma$ as the union of $Z'$ and $C_k$, the following sequence is exact:
		\[0\ra \mc I_\Gamma(k,a)\ra \mc I_{Z'}(k,a)\ra \mcb O_{C_k}(k,a)\ra 0\]
		Now if $H^1(\mathbb F_m,\mc I_\Gamma(k,a))\neq 0$, vanishing on $C_k$ would impose less than $h^0(C_k,\mcb O_{C_k}(k,a))$ conditions on $H^0(\mathbb F_m,\mc I_{Z'}(k,a))$. In particular, $h^0(\mathbb F_m,\mc I_{Z'}(k-1,a))\geq h^0(\mathbb F_m,\mc I_{Z'}(k,a))-km-a$.
		However, by induction hypothesis, one has that:
		\[h^0(\mathbb F_m,\mc I_{Z'}(k-1,a))= h^0(\mathbb F_m,\mcb O_{\mathbb F_m}(k-1,a))-\sum_{i<k}s_i= h^0(\mathbb F_m,\mcb O_{\mathbb F_m}(k,a))-km-a-1-\sum_{i<k}s_i\] 
		Thus the map $H^0(\mathbb F_m,\mc I_{Z'}(k,a))\ra H^0(C_k,\mcb O_{C_k}(k,a))$ must be surjective. In particular, for the result to hold it is sufficient to prove that passing through $s_k$ points imposes independent conditions on $H^0(C_k,\mcb O_{C_k}(k,a))$. Now this immediately follows from the condition on $s_k$ as global sections of $\mcb O_{C_k}(k,a)$ correspond to homogeneous polynomials of degree $km+a$ in two variables.		
	\end{proof}
	If we allow ourselves to instead choose general sections and general points in them, we may drop the assumptions on both the number of sections and the bound on the $s_i$. Indeed we have:
	\begin{lem}\label{good lem}
		Let $C_1,\dots,C_u$ be general sections of $\pi:\mathbb F_m\ra \Proj$ and let for each $i=1,\dots, u$, $S_i$ be a set of $s_i$ general points of $C_i$ with $s_1\geq\dots \geq s_u>0$. Then, if for each $i=1,\dots,u$,
		\begin{equation}\label{star}
			\sum_{j\leq i} s_j\leq \sum_{j\leq i}\mathrm{Max}\{(k-j)m+a+1,0\}
			\tag{\textasteriskcentered }
		\end{equation}
		passing through the $S_i$ imposes independent conditions on $|\mcb O_{\mathbb F_m}(k,a)|$.
	\end{lem}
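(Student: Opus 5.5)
The plan is to reduce to Lemma \ref{easy h0} by a degeneration and semicontinuity argument, combined with a residual (Castelnuovo-type) induction on $k$. Since imposing independent conditions is an open condition on the data $(C_1,\dots,C_u,S_1,\dots,S_u)$, it suffices to exhibit one configuration, possibly with the sections in special position, for which the evaluation map $H^0(\mathbb F_m,\mcb O_{\mathbb F_m}(k,a))\ra \bigoplus_i \mcb k^{s_i}$ is surjective. The parameter space consists of $u$-tuples of sections together with $s_i$ points on each, and it is irreducible. So surjectivity at one point gives it on a dense open set.

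The main step is an induction on $k$ via the exact sequence
\[0\ra \mcb O_{\mathbb F_m}(k-1,a)\xrightarrow{\cdot C_1}\mcb O_{\mathbb F_m}(k,a)\ra \mcb O_{C_1}(k,a)\ra 0,\]
where $C_1$ is a section of class $(1,0)$ and $\mcb O_{C_1}(k,a)\cong\mcb O_{\Proj}(km+a)$. This is the same mechanism as in Lemma \ref{easy h0}, but with the sections ordered by decreasing $s_i$ and the residual degrees matched to $(k-j)m+a+1$. Concretely:
\begin{itemize}
\item If $s_1\leq km+a+1$, the points of $S_1$ impose independent conditions on $H^0(\mcb O_{C_1}(k,a))$, and every form on $C_1$ lifts, since $H^1(\mcb O(k-1,a))=0$ when $a\geq -m$; one checks this separately otherwise.
\item A form vanishing on all the $S_i$ and divisible by $C_1$ then corresponds to a form of type $(k-1,a)$ vanishing on $S_2,\dots,S_u$. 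These point sets lie on general sections, and condition (\ref{star}) for them is exactly the shifted inequality $\sum_{2\leq j\leq i}s_j\leq\sum_{2\leq j\leq i}\mathrm{Max}\{(k-j)m+a+1,0\}$. That inequality holds as long as the first inequality is tight or slack in the right way.
\item The dimension count then closes the induction.
\end{itemize}

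The difficulty is that (\ref{star}) only bounds partial sums, so $s_1$ may exceed $km+a+1$ while the excess is compensated by later, smaller $s_j$. Since $s_1\geq s_2\geq\dots$ and the bounds decrease in $j$, this genuinely happens. To handle it, I would not take the $C_i$ to be distinct from the start. Instead I would degenerate: specialise points of $S_1$ beyond the first $km+a+1$ onto the next sections. More precisely, I would use that the conditions are imposed by general points on a general section, and that through $m+1$ general points there is a section. This lets me redistribute excess points so that the new configuration satisfies the pointwise bounds $s_j'\leq \mathrm{Max}\{(k-j)m+a+1,0\}$ together with the monotonicity required in Lemma \ref{easy h0}, after reversing the indexing. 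Since the points of $S_1$ are general on a general $C_1$, they are in particular general points of $\mathbb F_m$, so they can be specialised to any position. Independence for the specialised configuration then gives independence for the general one, by semicontinuity.

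The main obstacle is making this redistribution rigorous. After moving points, the configuration must still consist of general points on general sections, with the sets disjoint from the other sections as Lemma \ref{easy h0} requires. The partial-sum inequalities (\ref{star}) have to be exactly what guarantees that a greedy filling exists: fill the section with capacity $km+a+1$ first, then $(k-1)m+a+1$, and so on. I would first try a direct greedy argument showing the filled vector $s'$ is dominated in the required sense. Failing that, I would argue directly with the residual sequence, using Remark \ref{good remark} to control the excess points as general points of $\mathbb F_m$ imposing maximal-rank conditions on the residual system.
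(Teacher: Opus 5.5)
There is a genuine gap: your residual induction does not close in the cases that matter, and the redistribution you propose to rescue it is not a valid degeneration.

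First, the diagnosis is off. Since $s_1>0$, condition (\ref{star}) for $i=1$ already forces $s_1\le (k-1)m+a+1\le km+a+1$, so $S_1$ never overflows $C_1$. The actual difficulty is the reverse. $s_1$ may be strictly below the capacity of $C_1$, while some later $s_j$ exceeds the capacity $(k-j+1)m+a+1$ that $C_j$ has when it is peeled. This is allowed, because the $s_j$ are non-increasing while the capacities drop by $m$ at each step. Peeling $C_1$ discards that slack. The sequence $0\ra\mc I_{S'}(k-1,a)\ra\mc I_S(k,a)\ra\mc I_{S_1,C_1}(k,a)\ra 0$ only gives $h^0(\mc I_S(k,a))\le h^0(\mc I_{S'}(k-1,a))+km+a+1-s_1$, and this is sharp only if $S'$ is already independent for $(k-1,a)$. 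Hence your induction works exactly under the pointwise bounds of Lemma \ref{easy h0}, and the content of the present lemma is precisely the case where those bounds fail. For example, take $k=4$, $m=2$, $a=0$ and $(s_1,\dots,s_4)=(4,4,4,4)$. This satisfies (\ref{star}): the partial sums $4,8,12,16$ sit below $7,12,15,16$. Yet in whatever order you peel, the last section is reached in class $(1,0)$, where $h^0(\mcb O_C(1,0))=m+a+1=3<4$.

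Your fix does not follow from semicontinuity. Maximal rank spreads from a configuration $Z_0$ to the general member of the family only if $Z_0$ lies in the closure of that family. Points of $S_j$ are not general points of $\mathbb F_m$ once $s_j>m+1$. Configurations of $s_j$ points on one section fill a locus of dimension at most $s_j+m+1<2s_j$, and every limit still has all $s_j$ points on a single member of $|\mcb O_{\mathbb F_m}(1,0)|$. So, while sections stay sections, the only way to make one point of $S_j$ count towards $C_\ell$ without dragging all of $S_j$ along is to specialise it into $C_j\cap C_\ell$, which consists of $m$ points. Your greedy refilling ignores this constraint. So does the fallback via Remark \ref{good remark}, which concerns general points of $\mathbb F_m$.

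A repair in your framework would have to absorb the overflow of each $C_j$ through these intersection points. That overflow is at most $(j-1)m$, by monotonicity and $s_1\le km+a+1$. You would need at most $m$ points per pair, staying within the slack of the earlier sections, and you would need to prove this is always possible under (\ref{star}). That combinatorial step is the heart of the lemma, and it is missing.

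The paper avoids it algebraically:
\begin{itemize}
\item It reduces to $u\le k+1$ by merging sections. This is a legitimate specialisation, since the grouped sums still satisfy (\ref{star}).
\item It brings the evaluation matrix to the block-triangular form (\ref{block triang}). The blocks carry diagonal factors built from the differences $h_j-h_i$, which vanish exactly on $C_j\cap C_i$.
\item It specialises $h_i=\lambda_it_0^m$, and uses Laplace expansion with a lexicographic term order to show that one extremal monomial of the maximal minor survives.
\end{itemize}
The Claim (\ref{win}), which bounds the accumulated overflow by $(i-1)m$, plays the role of the intersection count above.
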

	\begin{proof}
		As noted in Remark \ref{rmk 2}, sections of $\pi:\mathbb F_m\ra \Proj$ correspond to polynomials of the form $u_1-hu_0$ with $h\in H^0(\Proj,\mcb O_\Proj(m))$. To give general sections $C_1,\dots,C_u$ of $\pi$ is hence equivalent to giving general elements $h_1,\dots,h_{u}\in H^0(\Proj,\mcb O_{\Proj}(m))$. Let now for each $i=1,\dots u$, $S_i=\{p_{i,1},\dots p_{i,s_1}\}$ and let $T_i:=\pi(S_i)=\{q_{i,1},\dots,q_{i,s_1}\}$. 
		Define \[ev_\mbd q^{\mbd h} : V_n:=\prod_{i=0}^{k} H^0(\mcb O_{\Proj}(im+a))\ra \bigoplus_{j=l}^{k}\mcb k^{\oplus s_j}\] \[(f_i)_{i=0}^{k}\xmapsto{\qquad} \left(\left( \sum_{i=0}^{k} f_i(q_{j,t})h_j^{k-i}(q_{j,t})\right)_{t=1}^{s_j}\right)_{j=1}^{u}\]
		By construction, $Ker(ev_\mbd q^{\mbd h})$ is the subspace of $H^0(\mathbb F_m,\mcb O_{\mathbb F_m}(k,a))$ of global sections vanishing over each of the $S_i$. Hence, if one defines $\delta$ to be the sum of the $s_i$ and
		\[\mc K_n:=\{(\mbd h, \mbd q, \mbd v)\in H^0(\Proj,\mcb O_{\Proj}(m))^{u}\times (\Proj)^\delta\times V_n |\ \mbd v\in Ker(ev_\mbd q^\mbd h)\}\]
		the proof of the lemma reduces to finding the dimension of a general fibre of the projection
		\[p_1:\mc K_n\ra H^0(\Proj,\mcb O_{\Proj}(m))^u\times (\Proj)^\delta\]
		Notice moreover that, by semi-continuity of fibre dimension, once one finds an $\mbd h$ and a $\mbd q$ such that $ev_\mbd q^\mbd h$ has maximal rank, the dimension of the general fibre is the expected one.
		\vskip 1 em
		Notice also that, choosing $\{t_{0,j},t_{1,j}\}_{j=1}^\delta$ as coordinates on $(\Proj)^\delta$, with respect to the standard monomial basis on $H^0(\mathbb F_m,\mcb O_{\mathbb F_m}(k,a))$, $ev_\mbd q^\mbd h$ is represented by the following matrix
		\[\begin{tikzpicture}[baseline=(m.center)]
			\node (m) {$\left(\begin{array}{cccccc}
					t_{0,1}^{km+a} & \dots & t_{1,1}^{km+a} & h_1t_{0,1}^{(k-1)m+a} & \dots & h_1^{k}t_{1,1}^{a}\\
					\dots & \dots & \dots & \dots & \dots & \dots\\
					t_{0,s_1}^{km+a} & \dots & t_{1,s_1}^{km+a} & h_1t_{0,s_1}^{(k-1)m+a} & \dots & h_1^{k}t_{1,s_1}^{a}\\
					&  &  & & & \\
					\dots & \dots & \dots & \dots & \dots & \dots\\
					&  &  & & & \\
					t_{0,\delta-s_u+1}^{km+a} & \dots & t_{1,\delta-s_u+1}^{km+a} & h_{u}t_{0,\delta-s_u+1}^{(k-1)m+a} & \dots & h_{u}^{k}t_{1,\delta-s_u+1}^{a}\\
					\dots & \dots & \dots & \dots & \dots & \dots\\
					t_{0,\delta}^{km+a} & \dots & t_{1,\delta}^{km+a} & h_{u}t_{0,\delta}^{(k-1)m+a} &\dots & h_{u}^{k}t_{1,\delta}^{a}
				\end{array}\right)$};
			\draw[decorate,decoration={brace,mirror,amplitude=5pt},xshift=-6pt]
			([yshift=-2pt]m.north west)
			-- ([yshift=4pt]$(m.north west)!0.4!(m.south west)$)
			node[midway,left=7pt]{\text{$s_{1}$}};
			\draw[
			decorate,
			decoration={brace,mirror,amplitude=5pt},
			xshift=-6pt
			]
			($(m.north west)!0.65!(m.south west)$)
			-- ([yshift=2pt]m.south west)
			node[midway,left=7pt]{\text{$s_{u}$}};
		\end{tikzpicture}\]
		\normalfont
		The proof will now proceede by induction on $u$. Notice that, by our assumption on the $s_i$-s, if $u=1$, the theorem follows by Lemma
		\ref{easy h0}. Now assume the lemma to hold for each $u'<u$, let's prove it for $u$.\\
		Again by our assumptions on the $s_i$-s, there exists a permutation $\sigma\in \mathrm{Aut}(\{1,\dots,k+1\})$ and a partition of $\{k+2,\dots,u\}$ into $k+1$ subsets $\Lambda_1,\dots,\Lambda_{k+1}$ such that, defining for each $i\leq k+1$, $s'_i:=s_{\sigma(i)}+\sum_{j\in \Lambda_i}s_j$, the $s'_i$-s still satisfy (\ref{star}). 
		\vskip 1 em 
		We may hence assume that $1<u\leq k+1$. Indeed, if $u>k+1$, then, taking $h'_1,\dots,h'_{k+1}$ general in $H^0(\Proj,\mcb O_\Proj(m))$ and $h_j:=h'_\ell$ whenever $j=\sigma(\ell)$ or $j\in \Lambda_\ell$, by induction hypothesis there exists a $\mbd q$ such that $ev_\mbd q^\mbd h$ has maximal rank as, up to a permutation of the rows, the matrix representing $ev_\mbd q^\mbd h$ corresponds to the one representing $ev_\mbd q^{\mbd h'}$.\\
		Notice now that, calling $D_c[s,\ell]:=\mathrm{Diag}\left(\prod_{i=0}^{c-1}(h_c-h_i)(t_{0,j},t_{1,j})_{j=s+1}^\ell\right)$ and $A_i[s,l]$ the matrix whose rows are the standard basis of $H^0({\mathbb F_m},\mcb O_{\mathbb F_m}((k-i)m+a))$ taken in the variables $t_{0,j}, t_{1,j}$ for $s<j\leq \ell$, using elementary column transformations one gets a block matrix in the form:\\
		\begingroup
		\small
		
		\begin{equation}\label{block triang}
			\left(\begin{array}{cccccccc}
				A_{0}[0,s_1] & 0 & 0 & \dots & 0 & 0 & \dots & 0\\
				A_0[s_1,s_1+s_2] & (D_1A_1)[s_1,s_1+s_2] & 0 & \dots & 0 & 0 & \dots & 0\\
				\dots &  \dots & \dots & & \dots  & & &\\
				A_0[\delta-s_{u},\delta] & (D_1A_1)[\delta-s_{u},\delta] & (D_2A_2)[\delta-s_{u},\delta] & \dots & (D_{u}A_{u})[\delta-s_{u},\delta] & 0 & \dots & 0\\
				&  &  & &  & & &\\
				\dots & \dots & \dots & \dots & \dots& & &\\
				&  &  & & & & &\\
				A_0[\delta-s_{u},\delta] & (D_1A_1)[\delta-s_{u},\delta] & (D_2A_2)[\delta-s_{u},\delta] & \dots & (D_{u}A_{u})[\delta-s_{u},\delta] & 0 & \dots & 0
			\end{array}\right)
		\end{equation}
		\endgroup
		\vskip 2em
		\normalfont
		Introduce now some new variables $\lambda_1,\dots,\lambda_{u}$, let $h_i=\lambda_it_0^m$ and choose the lexicographic term ordering on the monomials of $\mcb k[\lambda_1,t_{0,1},t_{1,1},\dots,t_{0,\delta},t_{1,\delta}][\lambda_2,\dots,\lambda_{u}]$. Note that, once one has proven that with this choice of $\mbd h$, calling $E$ the $\delta\times \delta$ submatrix of the above matrix given by the first $\delta $ columns, the coefficient of the monomial of $|E|$ with maximal possible term ordering is not zero, we are done. Note moreover that in the case $u=1$ this is clearly true as $|E|$ is just the determinant of a Vandermonde matrix. Hence we may continue with the proof by induction to not only show that $|E|\neq 0$, but that, with this choice of $h_i$-s, the coefficient of the monomial of $|E|$ with maximal possible term ordering is not zero in $\mcb k[\lambda_1,t_{0,1},t_{1,1},\dots,t_{0,\delta},t_{1,\delta}]$.
		\vskip 1 em
		Let $\delta':=\delta-km-a-1$ and denote by $\mc P_{\delta'}$ the set of all subsets of $\{s_{1}+1,\dots,\delta\}$ having cardinality $\delta'$.\\
		For each $J\in \mc P_{\delta'}$, denote by $E_J$ the submatrix of $E$ given by the elements of the $\delta'$ columns following the first $km+a+1$ having row indices in $J$ and by $A_J$ the one defined by the elements of the first $\delta-\delta'$ columns whose row indices are not in $J$.\\ 
		Calling  $sgn(J):=(-1)^{\delta'(\delta-\delta')+\binom{\delta'+1}{2}+\sum_{j\in J} j}$, by Laplace's expansion formula, one has that:
		\[|E|=\sum_J sgn(J)|A_J||E_J|\]
		Now, for each $i\geq 2$, let $S'_i$ be the subset of $S_i$ given by the first $m_i$ elements of $S_i$ where 
		\[m_i:=\mathrm{Min}\left\{s_i,(k-i+1)m+a+1+\sum_{2\leq \ell<i}[(k-\ell+1)m+a+1-m_l]\right\}\]
		\[r_0:=\mathrm{Max}\left\{j \ |\ \sum_{2\leq i\leq j} m_i<\delta'\right\}\]
		and let $M$ be the set of indices of the $S'_i$ for $i\leq r_0$ together with the first $\delta'-\sum_{2\leq i\leq r_0}m_i$ ones of $S'_{r_0+1}$. By $(\ref{block triang})$, 
		\[
		E_{M}= \mathrm{Diag}\left(\left((h_j-h_0)(t_{0,\ell},t_{1,\ell})\right)_{\ell\in M}\right)E(\mbd h',\mbd q')
		\]
		where $\mbd h'=\{h_{2},\dots,h_{r_0+1}\}$ and $\mbd q'$ is given by the $\delta'$ points in the $S'_i$. Since, by construction, the $\mbd q'$ satisfy (\ref{star}), by induction hypothesis, for generic $\mbd h$ this is non $0$. Being a determinant of a Vandermonde matrix with distinct rows, the same is true for $|A_{M}|$.
		\vskip 1 em
		Now, given $J\in \mc P_{\delta'}$, define $J_i:=\{j\in J\ |\ \sum_{l< i} s_l<j\leq \sum_{l\leq i} s_l\}$ and define a partial ordering $\succcurlyeq$ on $\mc P_{\delta'}$ by setting $J\succcurlyeq J'$ if and only if there is some $2\leq j\leq i_0$ such that $J_j\supseteq J'_j$ and, for each $i<j$, $J_i=J'_i$.
		Let \[\mc P:=\{J\in \mc P_{\delta'}\ |\ \text{for each}\ i,\ \sum_{\ell\leq i} \#J_\ell \leq \sum_{\ell\leq i} ((k-\ell-1)m+a+1)\}\]
		Notice that, by the block structure given in $(\ref{block triang})$, if $J\notin \mc P$, $|E_J|=0$. By induction hypothesis, one also has that for each $J\in\mc P$, $|E_J|\neq 0$. By construction, the $M$ defined above is maximal in $\mc P$ with respect to $\succcurlyeq$.
		\vskip 1 em
		\textit{\bd{Claim:}} For any $i$ such that $\sum_{2\leq \ell\leq i} s_\ell\leq \delta'$, \begin{equation}\label{win}
			\sum_{2\leq \ell\leq i}(s_\ell-m_\ell)\leq (i-1)m
		\end{equation}
		\vskip 1 em
		Indeed, choose $\ell_1,\dots,\ell_t$ to be the indices such that $m_i\neq s_i$, for each $i<\ell_1$, 
		\[\sum_{2\leq \ell\leq i}(s_\ell-m_\ell)=0\] 
		for each $\ell_1\leq i<\ell_2$,
		\[\sum_{2\leq \ell\leq i}(s_\ell-m_\ell)=s_{\ell_1}-m_{\ell_1}:=\sum_{2\leq \ell\leq \ell_1}\left[ s_\ell-((k-\ell+1)m+a+1)\right]\leq km+a+1-s_1\]
		Now though, since $s_{\ell_1}\leq s_1$ and $m_{\ell_1}\neq s_{\ell_1}$ implies $s_{\ell_1}\geq (k-\ell_1+1)m+a+1$, the above implies:
		\[\sum_{2\leq \ell\leq i}(s_\ell-m_\ell)\leq (\ell_1-1)m\leq (\ell-1)m\]
		Now consider $l_j\leq i< l_{j+1}$ with $j>1$. By assumption,
		\[m_{\ell_j}=(k-\ell_j+1)m+a+1+\sum_{2\leq \ell< \ell_j}\left[ (k-\ell+1)m+a+1-m_\ell\right]\]
		\[m_{\ell_{j-1}}=(k-\ell_{j-1}+1)m+a+1+\sum_{2\leq \ell< \ell_{j-1}}\left[ (k-\ell+1)m+a+1-m_\ell\right]\]
		Putting the two together one then has 
		\[m_{\ell_j}=(k-\ell_j+1)m+a+1+\sum_{\ell_{j-1}< \ell< \ell_j}\left[ (k-\ell+1)m+a+1-s_\ell\right]\]
		Thus:
		\[s_{\ell_j}-m_{\ell_j}= \sum_{\ell_{j-1}< \ell\leq  \ell_j}\left[ s_\ell-((k-\ell+1)m+a+1)\right]\]
		Putting everything together one then has
		\[\sum_{u\leq j}(s_{\ell_u}-m_{\ell_u})=\sum_{2\leq \ell\leq \ell_j}\left[ s_\ell-((k-\ell+1)m+a+1)\right]\leq km+a+1-s_1\leq (\ell_j-1)m\]
		Thus concluding the proof of claim.
		\vskip 1 em
		Notice also that, \[m_{r_0}=\delta'-\sum_{2\leq \ell\leq r_0}m_\ell<s_{r_0}\leq s_1\] and $M_j=\emptyset$ for each $j>r_0$.\\
		Notice that, by definition and our choice of term order on $\mcb k[\lambda_1,t_{0,1},t_{1,1},\dots,t_{0,\delta},t_{1,\delta}][\lambda_2,\dots,\lambda_{u}]$, the maximal elements of $\mc P$ with respect to $\preceq$ are precisely the indices giving the summands of $|E|$ having the monomial in $\lambda_2,\dots,\lambda_{u}$ of maximal term order. Hence, to prove the lemma, we may restrict our attention to these ones.
		\vskip 1 em
		Let for each $i=1,\dots,m_{r_0}$ $j_i$ be the $i$-th element of $M_{r_0}$ and restrict $|E|$ to $t_{1,j_i}=t_{1,i}$ for each $i=1,\dots,m_{r_0}$ and to $t_{0,i}=1$ for each $i$. This restriction sets to zero every summand in the Laplace expansion of $|E|$ indexed by a $J$ that does not contain $M_{r_0}$. Now, for each $i<r_0$, let ${N_i=\{l_1,\dots, l_{s_i-m_i}\}}$ be the complement of $M_i$ in $\{\sum_{\ell<i} s_\ell+1,\dots,\sum_{\ell\leq i} s_\ell\}$ and define 
		\[p:=\prod_{i<r_0} \prod_{j\leq s_i-m_i} t_{\ell_j}^{km+a+1-\sum_{l<i}\#N_l-j}\]
		\begingroup
		\small
		\[\begin{tikzpicture}[baseline=(m.center)]
			\node (m) {$\left(\begin{array}{cccccccc}
					1 & \dots & t_{1}^{(k-1)m+a+x_2} & \dots & t_{1}^{km+a} & 0 & \dots \\
					\dots & \dots & \dots  & \dots & \dots & \dots & \dots\\
					1 & \dots & t_{s_1}^{(k-1)m+a+x_2} & \dots & t_{s_1}^{km+a} & 0 & \dots \\
					1 & \dots & t_{s_1+1}^{(k-1)m+a+x_2} & \dots & t_{s_1+1}^{km+a} & \lambda_2-\lambda_1 & \dots \\
					\dots & \dots & \dots  & \dots & \dots & \dots & \dots\\
					1 & \dots & t_{s_1+m_2}^{((k-1)m+a+x_2} & \dots & t_{s_1+m_2}^{km+a} & \lambda_2-\lambda_1 & \dots \\[4pt]
					1 & \dots & t_{s_1+m_2+1}^{(k-1)m+a+x_2} & \dots & t_{s_1+m_2+1}^{km+a} & \lambda_2-\lambda_1 & \dots \\
					\dots & \dots & \dots  & \dots & \dots & \dots & \dots\\
					1 & \dots & t_{s_1+s_2}^{(k-1)m+a+x_2} & \dots & t_{s_1+s_2}^{km+a} & \lambda_2-\lambda_1 & \dots \\[2pt]
					\dots & \dots & \dots  & \dots & \dots & \dots & \dots\\
				\end{array}\right)$};
			\draw[decorate,decoration={brace,mirror,amplitude=5pt},xshift=-6pt]
			([yshift=-3pt]m.north west)
			-- ([yshift=4pt]$(m.north west)!0.33!(m.south west)$)
			node[midway,left=7pt]{\text{$s_{1}$}};
			\draw[decorate,decoration={brace,mirror,amplitude=5pt},xshift=-6pt]
			([yshift=-43pt]m.north west)
			-- ([yshift=-40pt]$(m.north west)!0.27!(m.south west)$)
			node[midway,left=7pt]{\text{$m_2$}};
			\draw[decorate,decoration={brace,mirror,amplitude=5pt},xshift=-6pt]
			([yshift=-80pt]m.north west)
			-- ([yshift=-68pt]$(m.north west)!0.33!(m.south west)$)
			node[midway,left=7pt]{\text{$s_{2}-m_2$}};
			\draw[red, thick, rounded corners]
			([xshift=12pt,yshift=-1 pt]m.north west) rectangle
			([xshift=188pt,yshift=-38pt]m.north west);
			\draw[red, thick, rounded corners]
			([xshift=12pt,yshift=-78 pt]m.north west) rectangle
			([xshift=188pt,yshift=-120pt]m.north west);  
			\draw[blue, thick, rounded corners]
			([xshift=-70pt,yshift=55pt]m.south east) rectangle
			([xshift=-10pt,yshift=92pt]m.south east);
			\draw[blue, thick, rounded corners]
			([xshift=-70pt,yshift=2pt]m.south east) rectangle
			([xshift=-10pt,yshift=13pt]m.south east);
			\draw[orange, thick]
			([xshift=150pt,yshift=-93pt]m.north west) -- ([xshift=183pt,yshift=-93pt]m.north west);
			\draw[orange, thick]
			([xshift=62pt,yshift=-117pt]m.north west) -- ([xshift=87pt,yshift=-117pt]m.north west);
			
			\node[below=6pt of m, align=center, font=\small] (caption)
			{Fig.~1.\ Calling $x_2:=m-(s_2-m_2)+1$, the figure shows how the procedure described};
			\node[below=16pt of m, align=center, font=\small] (caption)
			{above selects one summand in the expansion of $|E|$};
		\end{tikzpicture}\]
		\endgroup
		Putting together \ref{block triang} and \ref{win} one gets that $|A_M||E_M|$ is the unique summand of $|E|$ multiplying the monomial in $\lambda_2,\dots,\lambda_{u}$ of maximal term order having as coefficient the monomial $p$ and having index containing $M_{r_0}$. Since no other summand in the Laplace expansion of $|E|$ restricted to $t_{0,i}=1$ for each $i$, and to $t_{1,j_i}=t_{1,i}$ for each $i=1,\dots,m_{r_0}$ can cancel out this monomial, this concludes the proof of the lemma.\\
	\end{proof}
	Notice that, using the same notation as in Lemma \ref{good lem}, the structure of the matrix in $(\ref{block triang})$ may more generally be used to compute how many conditions passing through the points in the $S_i$-s imposes on $|\mcb O_{\mathbb F_m}(k,a)|$, also when condition $(\ref{star})$ fails:
	\begin{cor}\label{main coro}
		Using the notation of Lemma \ref{good lem}, define for each $0\leq i\leq j\leq u$, 
		\[
		\sigma_k(i,j):=\sum_{i< t\leq j}\mathrm{Max}\{(k-i)m+a+1,0\}-s_i
		\] 
		Let $i_0:=0$ and define inductively $i_t:= \mathrm{Min}\{j>i_{t-1}\ |\ \sigma_{k-i_{t-1}}(i_{t-1},j)<0\}$. Then, calling $Z$ the $0$-dimensional subscheme of $\mathbb F_m$ given by the union of the $S_i$ and $\mc I_Z$ its ideal sheaf,
		\begin{enumerate}[label=$\roman*)$]
			\item if $i_t>k+1$ for some $t$, then  $h^0(\mathbb F_m, \mc I_Z(k,a))=0$;
			\item otherwise, $h^0(\mathbb F_m, \mc I_Z(k,a))=h^0(\mathbb F_m, \mcb O_{\mathbb F_m}(k,a))-\sum_{i\leq u} s_i-\sum_{t}\sigma_{k-i_{t-1}}(i_{t-1},i_t)$.
		\end{enumerate}
	\end{cor}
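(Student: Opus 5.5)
We use the block lower-triangular form (\ref{block triang}) of the matrix of $ev^{\mbd h}_{\mbd q}$, together with Lemma \ref{good lem} on each stretch of sections where (\ref{star}) holds. We read $\sigma_k(i,j)$ as $\sum_{i<t\leq j}\big(\mathrm{Max}\{(k-t+1)m+a+1,0\}-s_t\big)$, the slack in (\ref{star}) along the sections $C_{i+1},\dots,C_j$. The count then says the following. Between $i_{t-1}$ and $i_t$, the points on $C_{i_{t-1}+1},\dots,C_{i_t}$ impose independent conditions until the cumulative count first exceeds the available columns. At that moment the available columns are all used up, and the overflow $-\sigma_{k-i_{t-1}}(i_{t-1},i_t)$ imposes no new condition.

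The key observation is geometric. Once the points on $C_1,\dots,C_{i_1}$ exceed the bound (\ref{star}), every form of class $(k,a)$ through them must contain $C_1\cup\dots\cup C_{i_1}$. Indeed, a form vanishing on $C_j$ at more than $(k,a)\cdot C_j$ points, after earlier components are removed, must contain $C_j$. So we proceed by induction on $t$.

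First, by Lemma \ref{good lem} applied to $S_1,\dots,S_{i_1-1}$, together with suitable subsets of $S_{i_1}$ of the maximal size allowed by (\ref{star}), these points impose independent conditions. Using the column blocks of (\ref{block triang}), this saturates exactly the columns of $A_0,D_1A_1,\dots,D_{i_1-1}A_{i_1-1}$, i.e.\ the coefficients $f_k,\dots,f_{k-i_1+1}$ modulo the triangular change of basis. Second, we show that the remaining $-\sigma_k(0,i_1)$ points of $S_{i_1}$ impose no further condition. We argue via the exact sequence
\[0\ra \mcb O_{\mathbb F_m}(k-i_1,a)\xrightarrow{\cdot\prod_{j\leq i_1}(u_1-h_ju_0)} \mc I_{Z_{\leq i_1}}(k,a)\ra \cdots,\]
which shows that $H^0(\mc I_{Z_{\leq i_1}}(k,a))\cong H^0(\mcb O_{\mathbb F_m}(k-i_1,a))$. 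This identification follows from a degree count on each $C_j$ in turn: a section through too many points of $C_j$ is divisible by the equation of $C_j$. Third, the points $S_{i_1+1},\dots,S_u$ are general points on general sections disjoint from the earlier ones. So the residual problem is the same counting problem for class $(k-i_1,a)$ with sections $C_{i_1+1},\dots,C_u$, and the restriction of $u_1-h_ju_0$ to these points is a nonzero unit by generality. This gives the recursion with $k$ replaced by $k-i_{t-1}$ and explains the shifted index in $\sigma_{k-i_{t-1}}$.

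For case $i)$: if some $i_t>k+1$, or more generally if the recursion forces division by more than $k+1$ section equations, then $k-i$ drops below $0$, so the residual space is $H^0$ of a negative relative-degree class, which is $0$. Summing the contributions of each stage gives $h^0=h^0(\mcb O(k,a))-\sum s_i-\sum_t\sigma_{k-i_{t-1}}(i_{t-1},i_t)$, where each negative $\sigma$ corrects the overcount.

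The main obstacle is the second step. We must check that the independent part of stage $t$ is exactly what Lemma \ref{good lem} computes. We also must check that the intermediate sections $C_j$ with $i_{t-1}<j<i_t$, where (\ref{star}) holds only cumulatively, are indeed forced into the curve once stage $t$ overflows. Here we would use the column-count argument in the proof of Lemma \ref{easy h0}. Suppose the restriction to $C_j$ fails to be surjective. Then dropping to class $(k-1,a)$ would lose fewer than $(k-j+1)m+a+1$ dimensions, which contradicts the inductive count. The generality of the $h_j$ is needed so that the blocks $D_cA_c$ have full rank, as in the proof of Lemma \ref{good lem}.
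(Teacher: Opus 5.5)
Your main line is the paper's own argument, made explicit. The paper also reads the rank off the block lower-triangular matrix (\ref{block triang}) and asserts that, stage by stage, the column blocks $i_{t-1},\dots,i_t-1$ are filled. Your Step 1 (Lemma \ref{good lem} applied to $S_1,\dots,S_{i_1-1}$ and to a subset of $S_{i_1}$ cut down to equality in (\ref{star})) is precisely a justification of this. Divisibility by $\prod_{j\le i_1}(u_1-h_ju_0)$, with passage to class $(k-i_1,a)$, is its geometric reading.

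However, the reason you give for containment in your ``key observation'' and in the second step, a degree count on each $C_j$ in turn, is false in exactly the cumulative situation the corollary is about. Take $\mathbb F_3$, $(k,a)=(2,0)$, $s_1=s_2=6$. The column blocks have sizes $7,4,1$, so (\ref{star}) holds at $j=1$ and fails at $j=2$. Then $ii)$ gives $h^0=1$, i.e.\ every such form is divisible by $(u_1-h_1u_0)(u_1-h_2u_0)$. Yet $(2,0)\cdot C_1=6$, so six points on $C_1$ force nothing on their own. $C_1$ is forced only through the compatibility of the two restrictions at the $m=3$ points of $C_1\cap C_2$. Likewise, the repair you sketch through Lemma \ref{easy h0} concerns surjectivity of a restriction map, i.e.\ independence, not containment.

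What actually closes the gap is your Step 1. The rows indexed by $S_1\cup\dots\cup S_{i_1}$ are supported on the first $i_1$ column blocks. Once their rank equals the number of those columns, every kernel vector has vanishing coordinates there, i.e.\ lies in $\prod_{j\le i_1}(u_1-h_ju_0)\cdot H^0(\mcb O_{\mathbb F_m}(k-i_1,a))$. This forces all of $C_1,\dots,C_{i_1}$ at once, including the intermediate ones, and makes the exact sequence unnecessary. Since $\prod_{j\le i_1}(h_\ell-h_j)$ does not vanish at general points of $C_\ell$ for $\ell>i_1$, it also yields the recursion.

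Finally, keep the indexing consistent. In your absolute reading, the residual problem of class $(k-i_{t-1},a)$ still assigns $C_t$ the bound $(k-t+1)m+a+1$. Replacing $k$ by $k-i_{t-1}$ on top of that would double-shift. The recursion's outcome is $h^0(\mcb O_{\mathbb F_m}(k-i_T,a))-\sum_{j>i_T}s_j$, which is the expression in $ii)$.
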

	\begin{proof}
		Part $i)$ of the corollary immediately follows by noticing that, getting the matrix representing the evaluation morphism in a form similar to the one in $(\ref{block triang})$, the only element in the kernel is $0$.\\
		Similarly part $ii)$ is obtained by getting the matrix representing the evaluation morphism in a form similar to the one in $(\ref{block triang})$ and noticing that, whenever $i_t$ is defined, its rank drops by $\sigma_{k-i_{t-1}}(i_{t-1},i_t)$ as the matrix will be block diagonal with each block having rank 
		\[\sum_{i_{t-1}<\ell\leq i_t}(k-l)m+a+1\]
	\end{proof}
	\section{COMPUTING THE SCROLLAR INVARIANTS}
	Thanks to the lemmas proved in the previous sections, we may now proceede to proving the first two theorems:
	\begin{proof}[\bd{Proof of Theorem \ref{thm 1}}]
		Notice first that, since $\mc M_g(\mathbb F_m,(k,a))^{bir}$ is irreducible, proven that given a general point $S$ in $\mc Hilb^\delta(\mathbb F_m)$ there exists some $f:C\ra \mathbb F_m$ in $\mc M_g(\mathbb F_m,(k,a))^{bir}$ whose image is singular only in $S$,  the theorem follows by applying Lemma \ref{interpolating general points} and Remark \ref{good remark}, to compute:
		\[f(n):=h^0(\mc I_S(k-2,m+a-1-n)-h^0(\mc I_S(k-2,m+a-1-n)\]
		\vskip 1 em
		Take now $S$ as above. By assumption, $3\delta\leq \binom{k+1}{2}m+(k+1)(a+1)-1=|\mcb O_{\mathbb F_m}(k,a)|$. Thus, since being singular at a point imposes at most $3$ independent conditions on $|\mcb O_{\mathbb F_m}(k,a)|$,
		the sublinear system $\Sigma\subseteq |\mcb O_{\mathbb F_m}(k,a)|$ consisting of curves that are singular over $S$ is non empty.
		\vskip 1 em
		Now if there exists some $D\in \Sigma$ that does not contain the directrix of $\mathbb F_m$, calling $D_1,\dots,D_r$ its irreducible components, $K_{\mathbb F_m}\cdot D_j<0$ for each $j$. Thus, by Proposition $4.1$ of \cite{arbarello1981}, either the general element of $\Sigma$ is irreducible or there is a smooth and irreducible curve of genus $1$ $E$ such that $\Sigma\subseteq |2E|$. In the latter case, calling $(a,b)$ such that $[E]=a\zeta+b\xi$, one would need 
		\[1=\binom{a}{2}m+(a-1)(b-1)\]
		This in turn implies that $(m,k,a)\in\{(2,4,0),\ (1,6,0),\ (1,4,2),\ (0,4,4)\}$, which contradicts our assumptions. In particular, calling $\mc S$ the Severi variety parametrising irreducible nodal curves in $\mathbb F_m$ having class $(k,a)$ and geometric genus $g$, the map:
		\[\mathrm{Sing} \mc S\ra \mc Hilb^{\delta}(\mathbb F_m)\ |\ \Gamma\mapsto \mathrm{Sing}(\Gamma) \]
		is dominant.
		\vskip 1 em
		The only thing left to prove is thus that $\Sigma$ does not have the directrix $\Delta$ as a fixed component. Note first that, since $S$ is assumed to be general, we may assume that no point of $S$ lies in $\Delta$. Notice moreover that we may assume that $m\geq 2$ as, if $m\leq 1$, $\Delta\cdot K_{\mathbb F_m}<0$ and thus having the directrix as a component would pose no obstruction in applying the previous reasoning. 
		\vskip 1 em
		Assume by contradiction that $\Sigma$ had $\Delta$ as a fixed component and let $r$ be its multiplicity.\\
		Call $\Sigma'$ the sublinear system of $|\mcb O_{\mathbb F_m}(k-r,rm+a)|$ given by curves that are singular over $S$. By assumption there is an element $D\in \Sigma'$ not having the directrix as a component. Thus we may apply Proposition $4.1$ of \cite{arbarello1981} to find a $D'\in \Sigma'$ that is either an irreducible nodal curve singular only over $S$ or a smooth elliptic curve counted twice. Since $m\geq 2$, the latter case cannot happen hence $D'$ is an irreducible nodal curve singular only over $S$. Choosing $D''$ as $D'\cup \Delta$, we would then get an element of the sublinear system of $|\mcb O_{\mathbb F_m}(k-(r-1),a+(r-1)m)|$ given by curves that are singular over $S$. Since $D''$ is connected (as $a+rm>0$), has intersection with $S$ contained in only one component and:
		\[\mathrm{dim}|D'+\Delta|:=\mathrm{dim}|\mcb O_{\mathbb F_m}(k-r,a+rm)\otimes \mcb O_{\mathbb F_m}(1,-m)|=\]
		\[=\binom{k-r+2}{2}m+(k-(r-1)+1)(a+(r-1)m+1)-1=\]
		\[= \binom{k-r+1}{2}m+(k-r+1)(a+rm+1)+a+(r-1)m>|D'|=|D'|+|\Delta|\]
		one may repeat the proof of Step II in the proof of Proposition $4.1$ in \cite{arbarello1981} to find an element $D_0\in|\mcb O_{\mathbb F_m}(k-r+1,a+(r-1)m)|$ that is irreducible and singular over $S$. In particular, if we take $D_1$ to be the union of $D_0$ and $(r-1)$ times $\Delta$, $D_1$ lies in $\Sigma$ and has the directrix as a component with multiplicity $r-1$, giving us the desired contradiction.
	\end{proof}
	\begin{oss}
		Note that, since $\mathrm{dim}(\mc Hilb^\delta(\mathbb F_m))=2\delta$, the condition $3\delta\leq |\mcb O_{\mathbb F_m}(k,a)|$ is actually necessary to have dominance of the map $\mc S\ra \mc Hilb^\delta(\mathbb F_m)$. The above proof shows that it is also sufficient.
	\end{oss}
	\begin{ex}
		Taking for example $C$ to be the normalisation of a general curve $\Gamma$ of geometric genus $8$ and degree $6$ in $\Pb^2$, one has that its scrollar invariants with respect to a projection from a point not in $\Gamma$ are $(1,2,3,3,4)$ as one can see it as the normalisation of  a general curve of geometric genus $8$ and class $(6,0)$ in $F_1$.\\ 
		Note that these are indeed invariants of the cover and not of the curve. Indeed, projecting from a point on the curve would give $(2,3,3,4)$ while projecting from one of the nodes would yield $(3,4,4)$ as one may see them respectively as the normalisation of a general curve of geometric genus $8$ and class $(5,1)$  (resp. $(4,2)$) in $F_1$.
	\end{ex}
	As a corollary, since the Tschirnhausen bundle of a general $k:1$ cover of $\Proj$ is balanced (see for example \cite{ball89}) one has the following:
	\begin{cor}
		Let $f:C\ra \Proj$ be a general $k:1$ cover of $\Proj$. Then if $f$ factors as:
		\[
		\begin{tikzcd}
			C \arrow[r, "\nu"] \arrow{dr}[swap]{f} & \mathbb F_m \arrow[d,"\pi"]\\
			& \Proj
		\end{tikzcd}
		\]
		the image of $\nu$ has class $(k,a)$ with $a\geq \frac{g}{k-1}+1-m$.
	\end{cor}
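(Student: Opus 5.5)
The plan is to combine two facts: a general cover has balanced scrollar invariants, and a cover through $\mathbb F_m$ has an upper bound on its smallest scrollar invariant coming from the class $(k,a)$. The contradiction between them gives the inequality on $a$.

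\emph{Step 1 (upper bound on $e_1$).} Let $\Gamma=\nu(C)$, which has class $(k,a)$ since $f$ has degree $k$ and $\pi\circ\nu=f$. I will show $e_1\leq m+a$. Push forward the sequence $0\ra \mcb O_{\mathbb F_m}(-k,-a)\ra \mcb O_{\mathbb F_m}\ra \mcb O_\Gamma\ra 0$ along $\pi$. Relative duality gives $R^1\pi_*\mcb O_{\mathbb F_m}(-k,-a)\cong \bigl(\mc Sym^{k-2}(\mc E_m)\otimes \det\mc E_m\otimes\mcb O_\Proj(a)\bigr)^\vee$ up to the standard twist. This yields
\[\pi_*\mcb O_\Gamma\cong \mcb O_\Proj\oplus\bigoplus_{i=1}^{k-1}\mcb O_\Proj(-(im+a)).\]
The exponent bookkeeping here is routine; it is the same computation behind the fact that the Tschirnhausen bundle of a smooth curve of class $(k,a)$ is $\bigoplus\mcb O_\Proj(im+a)$, in agreement with the case $\delta=0$ of Theorem \ref{thm 1}.

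Since $\mcb O_\Gamma\hookrightarrow\nu_*\mcb O_C$ has finite-length cokernel, applying $\pi_*$ and quotienting by $\mcb O_\Proj$ gives an injection of rank $k-1$ bundles $\bigoplus_{i}\mcb O_\Proj(-(im+a))\hookrightarrow \Tsch(f)^\vee$ with torsion cokernel. Dualising, $\Tsch(f)\hookrightarrow\bigoplus_i\mcb O_\Proj(im+a)$ is generically an isomorphism. A nonzero map $\mcb O_\Proj(e_1)\ra\bigoplus_i\mcb O_\Proj(im+a)$ forces $e_1\leq \max_i(im+a)$, but that bound is too weak. The right statement is the standard comparison for generically isomorphic maps of split bundles on $\Proj$: if $\bigoplus\mcb O(e_i)\hookrightarrow\bigoplus\mcb O(c_i)$ with both sequences sorted, then $e_i\leq c_i$ for each $i$. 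I prove it by looking at the rank condition on the image of $\bigoplus_{j\geq i}\mcb O(e_j)$ in the summands of degree $\geq e_i$. In particular $e_1\leq m+a$.

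\emph{Step 2 (balancedness).} By \cite{ball89} a general $k:1$ cover has $e_{k-1}-e_1\leq 1$. Since $\sum e_i=g+k-1$, this gives $e_1=\lfloor\frac{g+k-1}{k-1}\rfloor=\lfloor \frac{g}{k-1}\rfloor+1$.

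\emph{Step 3.} Combining the two steps gives $a\geq e_1-m=\lfloor\frac{g}{k-1}\rfloor+1-m$, which is the claimed bound. Note that if $\frac{g}{k-1}$ is not an integer, the honest statement obtained this way carries a floor. I would state it with the floor, or observe that the integrality of $a$ makes the two versions equivalent only up to that rounding.

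The main obstacle is Step 1, namely justifying the comparison lemma $e_i\leq c_i$ cleanly and getting the twist in $R^1\pi_*$ exactly right. If the comparison is troublesome, I would fall back on the characterisation $e_1=\mathrm{Min}\{n\ |\ h^0(C,\mc A^{\otimes n})>n+1\}$ from Section 3. I would then exhibit enough sections of $\mc A^{\otimes(m+a)}$ directly: use the inclusion $\pi_*\mcb O_\Gamma\subseteq f_*\mcb O_C$ and compute $h^0(\Proj,\pi_*\mcb O_\Gamma(m+a))\geq m+a+2$, since the summand $\mcb O_\Proj(-(m+a))$ contributes a section after twisting.
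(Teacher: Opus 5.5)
You have proved a weaker inequality than the one stated, but that gap cannot be closed: the statement as written is false for $m\ge1$, and your floored bound is the correct, sharp form.

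\textbf{What your argument establishes.} Steps 1 and 2 are sound. The formula $\pi_*\mathcal O_\Gamma\cong\mathcal O_{\Proj}\oplus\bigoplus_{i=1}^{k-1}\mathcal O_{\Proj}(-(im+a))$ is right. The snake lemma turns $\pi_*\mathcal O_\Gamma\hookrightarrow f_*\mathcal O_C$ into an injection of the Tschirnhausen quotients with torsion cokernel. Your comparison argument then gives $e_i\le im+a$ for every $i$: the summand $\bigoplus_{j\ge i}\mathcal O(e_j)$ has zero component into every $\mathcal O(c_j)$ with $c_j<e_i$, so a rank count applies. The fallback section count also works.

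Add one line explaining why $\nu$ is birational onto its image. A general cover is primitive, and if $\nu$ had degree $k$ its image would be a section, whose class is not of the form $(k,a)$.

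The real issue is the conclusion. You obtain $a\ge\lfloor\frac{g}{k-1}\rfloor+1-m$. The statement asks for $a\ge\frac{g}{k-1}+1-m$, i.e.\ $a\ge\lceil\frac{g}{k-1}\rceil+1-m$. These differ whenever $(k-1)\nmid g$, so as a proof of the statement as written there is a gap.

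\textbf{Counterexamples to the stated bound.}

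\emph{Trigonal, odd genus.} A general trigonal curve of odd genus $g$ has scrollar invariants $(\frac{g+1}{2},\frac{g+3}{2})$. Its relative canonical embedding places it smoothly on $\mathbb P(\mathcal O(\frac{g+1}{2})\oplus\mathcal O(\frac{g+3}{2}))\cong\mathbb F_1$, compatibly with the trigonal map. Since $p_a(3,a)=2a+1$ on $\mathbb F_1$, its class is $(3,\frac{g-1}{2})$, whereas the statement demands $a\ge\frac g2$.

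\emph{Genus $4$.} On a general genus-$4$ curve, a general $g^1_4$ has the form $|K-p-q|$. So the cover is the plane quintic given by $|K-p|$ (which has two nodes), projected from the smooth point $q$. That is a curve of class $(4,1)$ on $\mathbb F_1$, while the statement demands $a\ge\frac43$.

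In both cases your floored bound holds with equality.

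\textbf{Comparison with the paper's route.} The paper reaches the unfloored inequality differently. Rather than bounding $e_1$, it combines balancedness of a general cover with the criterion of Theorem~\ref{thm 1}, ``balanced iff $\delta\ge\binom{k-1}{2}m$''. With $\delta=p_a(k,a)-g$, this criterion is literally $g\le(k-1)(a+m-1)$.

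The ``only if'' direction of that criterion fails when the invariants differ by exactly one. Theorem~\ref{thm 1}'s own formula gives $(a+1,a+2)$ for $(k,m,\delta)=(3,1,0)$, and $(2,2,3)$ for $(k,a,m,\delta)=(4,1,1,2)$. Both are balanced, yet both have $\delta<\binom{k-1}{2}m$.

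Your route therefore yields the right inequality and is also more robust. It needs only that $\nu$ be birational onto its image. The paper's route needs the hypotheses of Theorem~\ref{thm 1}, none of which the corollary supplies:
\begin{itemize}
\item the bound $3\delta\le\dim|\mathcal O(k,a)|$;
\item the excluded triples;
\item that the factorisation be a general point of $\mathcal M_g(\mathbb F_m,(k,a))^{bir}$.
\end{itemize}

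So state and prove the result with the floor. The unfloored form is recovered when $(k-1)\mid g$. It is also recovered when $m=0$, where your comparison lemma gives $e_i\le a$ for every $i$ and hence $g+k-1\le(k-1)a$ directly.
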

	Note that, since by the proof of the previous theorem, a general point $f:C\ra \mathbb F_m$ of $\mc M_g(\mathbb F_m,(k,a))$ satisfying the hypothesis of Theorem \ref{thm 1} has as image an irreducible nodal curve with nodes in general position, we may use Lemma \ref{interpolating general points} to also compute the splitting types of $\mcb O_C(\Delta)$. We may then proceed to also prove Corollary \ref{cor 1}:
	\begin{proof}[\bd{Proof of Corollary \ref{cor 1}}] The only thing left to find is the splitting type of $\mcb O(\Delta)$.
		Twisting the exact sequence in $(\ref{h0 interp})$ by $\mcb O(-\Delta)$ and taking cohomology one gets:
		\[0\ra H^0(\mathbb F_m,\mc I_S(k-3,2m+a-2-n))\ra H^0(C,\omega_C\otimes \mc A^{\otimes -n}\otimes \nu^*\mcb O(-\Delta))\ra\]
		\[\ra H^1(\mathbb F_m,\mcb O_{\mathbb F_m}(-3,2m-2-n))\ra H^1(\mathbb F_m,\mc I_S(k-3,2m+a-2-n))\]
		Now applying Leray's spectral sequence one gets that ${H^1(\mcb O_{\mathbb F_m}(-3,2m-2-n))\cong H^0(\mcb O_{\Proj}(m-2-n))}$. If $n\leq m-2$, by our assumptions on $\delta$, we have that:
		\[h^0(\mathbb F_m,\mcb O_{\mathbb F_m}(k-3,2m+a-2-n))=\binom{k-1}{2}m+(a+m-1-n)(k-2)\geq \delta\]
		Thus, by Lemma \ref{interpolating general points}, $h^1(\mathbb F_m,\mc I_S(k-3,2m+a-2-n))=0$, implying that:
		\[h^0(C,\omega_C(-\Delta)\otimes \mc A^{\otimes -n})= m-1-n+\binom{k-1}{2}m+(a+m-1-n)(k-2)-\delta\]
		Setting $n=0$ and using Riemann-Roch gives $h^0(C,\mcb O_C(\Delta))=1$. Using the above equations, one sees that $m$ is the minimal $n$ such that:
		\[h^0(C,\omega_C(-\Delta)\otimes \mc A^{\otimes -n+1})-h^0(C,\omega_C(-\Delta)\otimes \mc A^{\otimes -n})<k-1\]
		Since the splitting type $(g_1,\dots,g_k)$ of $\mcb O_C(\Delta)$ satisfies $g_{i+1}\geq -e_{i}$ for each $i\leq k-1$ and $\sum_i g_{i+1}+e_i=a$, the previous theorem tells us that the $g_{i+1}=-e_i$ for each $i>1$.\\ 
		It is now easy to see that, with the same notation for $l,d,j$ as in Theorem \ref{thm 1}:
		\[h^1(C,\mc End(\nu_*\mcb O_C(\Delta)))=h^1(C,\mc End(\nu_*\mcb O_C))+(k-3)a\]
		\[h^1(C,\mc Hom(\nu_*\mcb O_C,\nu_*\mcb O_C(\Delta)))=h^1(C,\mc End(\nu_*\mcb O_C))-a\]
		\[h^1(C,\mc Hom(\nu_*\mcb O_C,\nu_*\mcb O_C(\Delta))(m))=h^1(C,\mc End(\nu_*\mcb O_C))+\binom{k}{2}m-\binom{l}{2}m-dl-j-(k-1)-a\]
		Using $g=\binom{k}{2}+(k-1)(a-1)-\delta=\binom{k}{2}+(k-1)(a-1)-\binom{l}{2}m-dl-j$ one then gets the statement of the corollary.
	\end{proof}
	
	The following proposition is the natural generalisation of Theorem $1$ in \cite{coppens2021_scrollar}. Although under some generality conditions this will be a special case of Theorem \ref{main thm}, we include it because it describes the scrollar invariants of every nodal curve having nodes arranged in such a fashion.
	\begin{prop}\label{prop cop}
		Let $C$ be the normalisation of an irreducible nodal curve $\Gamma\subseteq \mathbb F_m$ of class $(k,a)$ and let $S:=\mathrm{Sing}(\Gamma)$. Let for each $i=1,\dots k-1$, $C_i\subseteq \mathbb F_m$ be a section of $\pi:\mathbb F_m\ra \Proj$ different from the diurectrix of $\mathbb F_m$ and assume that $S=S_1\cup\dots\cup S_{k-1}$ where for each $i$, $S_i$ is a divisor of degree $s_i$ on $C_i$ with $s_{i}\leq \mathrm{Max}(0,s_{i-1}-m)$ and $s_1\leq (k-1)m+a+1$. Let $j_1:=\lfloor \frac{s_\ell}{m}\rfloor$ and $i_1=\#\{t\geq 0 \ |\ s_{1+t}=s_{1}-t m\}$.
		Then, the scrollar invariants of $C$ are:
		\[e_i=\begin{cases}
			im+a & \text{if } 1\leq i\leq j_1\\
			(k-1)m+a-s_1                                 & \text{if } j_1<i< j_1+i_1
		\end{cases}\]
		The other ones are the last $k-j_1-i_1$ scrollar invariants of a curve of class $(k-i_1,a)$ having singular locus contained in $S':=S_{i_1+1}\cup \dots \cup S_{k-1}$
	\end{prop}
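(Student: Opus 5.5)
We compute the $e_i$ via the function $f^S_{k,a}(n)$ of Section 3: by the identification $H^0(C,\omega_C\otimes\mc A^{\otimes -n})\cong H^0(\mathbb F_m,\mc I_S(k-2,a+m-2-n))$ coming from (\ref{h0 interp}), we have $e_i=\mathrm{Min}\{n\ |\ f^S_{k,a}(n)<k-i\}$. Here $f^S_{k,a}(n)$ is the difference of $h^0(\mc I_S(k-2,a+m-1-n))$ and $h^0(\mc I_S(k-2,a+m-2-n))$. The plan is to compute these dimensions by decomposing a form $F=\sum_j f_j u_0^ju_1^{k-2-j}$, with $\deg f_j=jm+a+m-2-n$, along the sections $C_1,\dots,C_{k-1}$. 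This is the same column reduction that produces the block lower triangular shape (\ref{block triang}). Since the $C_i$ are arbitrary (not general) sections and $S$ is arbitrary on them, we will not use Lemma \ref{good lem}. Instead we argue directly as in Lemma \ref{easy h0}, peeling off one section at a time. The hypothesis $s_i\le \mathrm{Max}(0,s_{i-1}-m)$ is exactly what makes each step a statement about points on a single $\Proj$.

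The key step is the following. Write $V_n:=H^0(\mc I_S(k-2,a+m-2-n))$ and consider restriction to $C_1$. Because $C_1\cdot(k-2,a+m-2-n)=(k-1)m+a-2-n$, there are two cases.

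If $(k-1)m+a-1-n<s_1$, every element of $V_n$ must vanish on $C_1$. Then $V_n\cong H^0(\mc I_{S'}(k-3,a+m-2-n)\otimes\mcb O(C_1))$ with $S'=S\setminus S_1$. Since $C_1\sim(1,0)$, the residual has class $(k-3,a+m-2-n)$ and the points $S_2\cup\dots$ remain. This is the same problem with $k$ replaced by $k-1$ and $s_1$ replaced by $s_2$.

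Otherwise, restriction to $C_1$ imposes exactly $s_1$ conditions, since $S_1$ is a divisor on $C_1\cong\Proj$ of degree at most $\deg+1$. We still need surjectivity onto $H^0(\mcb O_{C_1}(-S_1)\otimes\mcb O(k-2,a+m-2-n))$. This follows from the vanishing of $h^1$ of the residual twist, which we argue inductively exactly as in the proof of Lemma \ref{easy h0}, comparing dimensions with the $(k-3,\cdot)$ system.

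Iterating this dichotomy gives a recursive formula for $h^0$. The thresholds at which $C_t$ becomes a fixed component are $n\ge (k-t)m+a-s_t$, plus bookkeeping shifts, and these thresholds are what produce the jumps of $f^S_{k,a}$.

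Next we read off the $e_i$. For $n$ small none of the $C_t$ is forced, and $f^S_{k,a}(n)$ equals the generic count $\#\{j\ |\ jm+a\ge n\}$ minus nothing. This is the same as for a smooth curve in $\mathbb F_m$, and it yields $e_i=im+a$ for $i\le j_1$. At $n=(k-1)m+a-s_1$ the section $C_1$ becomes fixed. Because $s_{1+t}=s_1-tm$ for $t<i_1$, the sections $C_1,\dots,C_{i_1}$ all become fixed at the same $n$: the degree on $C_{t+1}$ of the residual system drops by exactly $m$, and so does $s_{t+1}$. Hence $f$ drops by $i_1$ at once, which gives the block of invariants equal to $(k-1)m+a-s_1$.

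After removing $C_1,\dots,C_{i_1}$, the residual system is that of class $(k-i_1-2,\cdot)$ through $S'$. Shifting $(k,a)\mapsto(k-i_1,a)$ correctly accounts for the $i_1$ copies of $(1,0)$ removed. By the recursion, the remaining values of $f$ coincide with those of the function $f^{S'}_{k-i_1,a}$, which gives the last statement.

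The main obstacle is the surjectivity/$h^1$-vanishing at each peeling step for non-general $C_i$, together with checking that the index bookkeeping ($j_1$ versus $i_1$, and the off-by-one between $n$ and $n-1$ in $f$) matches the claimed ranges exactly. For the surjectivity I would first try the dimension-comparison trick from Lemma \ref{easy h0}, using the inequality $s_1\le (k-1)m+a+1$ and its propagated analogues $s_t\le (k-t)m+a+1$, which follow from $s_t\le s_{t-1}-m$.
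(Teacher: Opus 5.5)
Your proposal is correct and is essentially the paper's argument. You use independence via Lemma \ref{easy h0} for $n<(k-1)m+a-s_1$ (so $e_i=im+a$ there), the intersection-number argument forcing $C_1,\dots,C_{i_1}$ to be fixed components simultaneously once $n\geq (k-1)m+a-s_1$, and the resulting identification of $f^S_{k,a}(n)$ with $f^{S'}_{k-i_1,a}(n)$; the paper just does the forced step in one go and evaluates $f^S_{k,a}$ at $n=(k-1)m+a-s_1$ explicitly instead of setting up a general recursion.

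One thing to tidy: surjectivity at a peeling step should come from the dimension count of Lemma \ref{easy h0}, not from $h^1$-vanishing of the residual twist. That vanishing fails, since $H^1(\mathbb F_m,\mc I_{S'}(k-3,a+m-2-n))\neq 0$ as soon as $a+m-2-n\leq -2$, and this range does occur.
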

	\begin{proof}
		Note first that, since the case $m=0$ is proven in \cite{coppens2021_scrollar}, we only need to prove the case $m>0$. By assumption, if $s_{1}\leq (k-2)m+a+m-1-n$, for each $i$,
		${s_{i}\leq (k-i)m+a-1-n}$.\\ 
		Thus, for any such $n$, by Lemma \ref{easy h0}, one has that:
		\[h^0(\mathbb F_m,\mc I_S(k-2,m+a-2-n))=h^0(\mathbb F_m,\mcb O_{\mathbb F_m}(k-2,a+m-2-n))-\delta\]
		For each $i\leq (k-1-\frac{s_1+1}{m})$, the scrollar invariants of $C$ are hence those of a smooth curve
		in $\mathbb F_m$ of class $(k,a)$.
		\vskip 1 em
		If $n\geq (k-1)m+a-s_{1}$, the intersection pairing on $A^\bullet(\mathbb F_m)$, tells us
		that any global section of $\mc I_S(k-2,a+m-2-n)$ vanishes on $C_{1},\dots,C_{i_1}$. Indeed, calling $\sigma$ any such section, calling $\ell_0$ the first index such that $C_{\ell_0}\nsubseteq V(\sigma)$, one would have 
		\[C_{\ell_0}.V(\sigma):=\zeta\cdot ((k-1-\ell_0)\zeta+(m+a-n-2)\xi)<s_{1}-(\ell_0-1)m:=s_{\ell_0}\] 
		Thus global sections of $\mc I_S(k-2,a+m-2-n)$ are determined by global sections of\\ 
		${\mc I_{S'}(k-2-i_1,a+m-2-n)}$, where $S'$ is given by the union of all points of $S$ not in $C_{1},\dots,C_{i_1}$. In particular, for any $n> (k-1)m+a-s_{1}$, $f(n)$ is the same as that of the normalisation of a curve of class $(k-i_1,a)$ having singular locus satisfying the hypothesis of the lemma. 
		\vskip 1 em
		From this it follows that, determined $f((k-1)m+a-s_{1})$, one may inductively compute the scrollar invariants of $C$. By the above remarks one has:
		\[f((k-1)m+a-s_{1})=h^0(\mathbb F_m,\mcb O_{\mathbb F_m}(k-2,s_{1}-(k-2)m-1))-\delta+\]
		\[-(h^0(\mathbb F_m,\mcb O_{\mathbb F_m}(k-2-i_1,s_{1}-(k-2)m-2))-(\delta-\sum_{1\leq i\leq i_1} s_{i}))=\]
		\[h^0(\mathbb F_m,\mcb O_{\mathbb F_m}(k-2,s_{1}-(k-2)m-1))-h^0(\mathbb F_m,\mcb O_{\mathbb F_m}(k-2-i_1,s_{1}-(k-2)m-2))-\sum_{1\leq i\leq i_1} s_{i}\]
		which is either $j_{1}+1-i_{1}$ if $s_{1}\neq j_{1}m$ or $j_{1}+2-i_1$ if $s_{1}=j_{1}m$. Notice that the discrepancy here
		is just due to the fact that, in the first case, $(k-j_{1}-1)m+a$ (which, by induction, is the $(k-j_{1}-1)$-th invariant of a curve on $\mathbb F_m$ of class $(k,a)$ staisfying the hypothesis of the lemma) equals $(k-1)m+a-s_{1}$.
	\end{proof}
	\begin{ex}
		Choosing for example $k=7$, $s_1=3m+\alpha_1$, $s_2=m+\alpha_2$, with $\alpha_1,\alpha_2<m$ we have that the scrollar invariants of a curve having singular locus contained in the union of $2$ sections $C_1,C_2$ of $\pi:\mathbb F_m\ra \Proj$ with $s_1$ points in $C_1$ and $s_2$ points in $C_2$ are:
		\[(a+m,a+2m,a+3m-\alpha_1,a+3m, a+4m-\alpha_2, a+4m)\]
	\end{ex}
	\vskip 1 em
	Notice now though that, since lying on a curve of class $\zeta$ defines a divisor in $\mc Hilb^\delta(\mathbb F_m)$ for any $\delta>m+1$, if $m>0$ this proposition cannot describe the invariants of the normalisation of a general curve in $\mathbb F_m$. This is much weaker than what happens for $\Proj\times \Proj$ in \cite{coppens2021_scrollar} as, since that only dealt with $m=0$, there one may take $\delta\leq (k-1)$. The second Theorem of this paper aims to fill in the gap between the situation described above and the one where the curve is general in the Severi variety:
	\begin{proof}[\bd{Proof of Theorem \ref{main thm}}]
		Again the idea will be to compute $h^0(\mc I_S(k-2,m+a-2-n))$ for arbitrary $n$, which can be easily done using Lemma \ref{good lem} and Corollary \ref{main coro}. Notice also that we need only to compute it for $n\leq n_1$ as, proceeding as in the proof of Corollary \ref{main coro}, one sees that, if 
		\[\sum_{i\leq \ell}s_i>\sum_{i\leq \ell}\mathrm{Max}\{(k-i)m+a-1-n,0\}\]
		the matrix in $(\ref{block triang})$ does not have full rank and, choosing $i_1$ to be the first integer for which this happens, one sees that any section that vanishes over such an $S$ is determined by a section of $\mcb O_{\mathbb F_m}(k-2-i_1,m+a-2-n)$ vanishing over $S'$, where $S'$ is the subset of $S$ given by those points that lie in $C_i$ for $i<k-1-i_1$. In particular, in this case, its rank and consequently $h^0(\mathbb F_m,\mc I_S(k-2,m+a-2-n))$, may be determined inductively.
		\vskip 1 em
		As noted in the previous proposition, whenever passing through $S$ imposes independent conditions on $|\mcb O_{\mathbb F_m}(k,a)|$, $f^S_{k,a}=f^{\emptyset}_{k,a}$. Thus, the only thing left to prove the Theorem is to compute $f^S_{k,a}(n_1)$. By Lemma \ref{good lem} and Corollary \ref{main coro}, this is equal to:
		\[
		h^0({\mathbb F_m},\mc I_S(k-2,a+m-1-n_1))-h^0(\mathbb F_m,\mc I_S(k-2,a+m-2-n_1))=\]
		\[\left(h^0(\mcb O_{\mathbb F_m}(k-2,a+m-1-n_1))-h^0(\mcb O_{\mathbb F_m}(k-2,a+m-2-n_1))\right)+\sum_t\sigma_{k-i_{t-1}}(i_{t-1},i_t,n_1)\]
		Now, since for each $i=1,\dots, k-1$, the $i$-th scrollar invariant of a smooth curve of class $(k,a)$ is $im+a$, choosing $r_1$ so that $r_1m+a\leq n_1<(r_1+1)m+a$, \[
		h^0({\mathbb F_m},\mcb O_{\mathbb F_m}(k-2,a+m-1-n_1))-h^0(\mathbb F_m,\mcb O_{\mathbb F_m}(k-2,a+m-2-n_1)=k-r_1
		\]
		which in turn shows that the scrollar invariants are those described in the statement of the Theorem.
	\end{proof}
	\begin{oss}
		Note that, if we take $\delta\leq \mathrm{Min}(p_a(k,a), \frac{1}{3}h^0(\mcb O_{\mathbb F_m}(k,a)))$, choosing each of the $s_i\leq m+1$ we get back the scrollar invariants of the normalisation of a general curve of class $(k,a)$ and geometric genus $p_a(k,a)-\delta$. This is consistent with what we showed in Theorem \ref{thm 1} as, with those assumptions on the genus, the nodes of such a curve are in general position.
		\vskip 1 em
		In particular, Corollary \ref{main coro} bridges the gap between Theorem \ref{thm 1} and Proposition \ref{prop cop}.
	\end{oss}
	\begin{ex}
		As an example, take $(k,a)=(9,4)$, $(s_1,\dots,s_{u})=(18,18,10,6,2)$.
		Following the procedure illustrated in the proof, one gets $n_1=16$, $r_1=3$, $j_1=i_1=2$, $\delta_1=2$. Hence the first 5 scrollar invariants are $8,12,16,16,16$. The other ones may be computed by considering those of a curve having $(k_2,a_2)=(7,4)$ and $(s_1',\dots,s'_u)=(10,6,2)$. Repeating the reasoning of the proof in this case yields $n_2=18$, $r_2=0$, $j_2=3$, $\delta_2=3$. Thus the scrollar invariants of such a curve are:
		\[(8,12,16,16,16,18,18,18)\]
		Note that the normalisation of a general curve of this class and geometric genus would instead have invariants:
		\[(8,12,16,17,17,17,17,18)\]
	\end{ex}
	\vskip 3 em
	\section{CURVES IN $\mathbb F_m$ WITH NODES IN PRESCRIBED POSITION}
	We end off this note by showing the existence of curves in $\mathbb F_m$ having nodes positioned as prescribed in Theorem \ref{main thm}. Restricting to the case of $m=0$ this greatly improves the bounds on the genus given in Proposition 1 of \cite{coppens2021_scrollar} and in the Third Claim of \cite{bal02}. In what follows we shall assume that $k\geq 5$. If $k\leq 4$, one may mimic the proof of Theorem $2$ in \cite{coppens2021_scrollar} to get the bound on $g$ required in the statement of Theorem \ref{final thm}.
	\vskip 2 em
	\begin{lem}\label{main lem2}
		Let $u\geq 0$ and let $s_1\geq\dots\geq s_u$ be a sequence of non increasing integers. Then, for any $\ell\geq 1$, $k\geq \ell-1$, $a\geq \big\lceil \frac{u}{\ell}\big\rceil s_1+s_{u}$ and general sections of $\pi:\mathbb F_m\ra \Proj$, $C_1,\dots, C_{u}$, choosing a subset $S_i\subseteq C_i$ of $s_i$ general points and calling $S:=S_1\cup\dots\cup S_u$, the linear system $|\mc I_S(k,a))|$ does not have a fixed component.
	\end{lem}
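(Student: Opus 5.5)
The plan is to exhibit enough explicit members of $|\mc I_S(k,a)|$ built out of sections and fibres, and to check that they have no common component. Group the $u$ sections into $\ell$ blocks $B_1,\dots,B_\ell$, each containing at most $\lceil u/\ell\rceil$ of the $C_i$. For a block $B$, consider divisors of the form
\[D_B:=\sum_{i\notin B}C_i\ \text{(at most $\ell-1$ blocks' worth, suitably truncated)}+\sum_{i\in B}\sum_{p\in S_i}F_p+\text{(residual fibres and sections)},\]
where $F_p$ is the fibre of $\pi$ through $p$.

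The precise bookkeeping goes as follows. Since $k\geq \ell-1$, we can pick one section $\Sigma_j$ per block for $\ell-1$ of the blocks. These must be chosen so that one curve of class $\zeta$ passes through the points of a whole block. That is impossible for general points, so instead we use fibres. The points of $S_i$ for $i\in B$ are covered by $\sum_{i\in B}s_i\leq \lceil u/\ell\rceil s_1$ fibres, which costs that many units of $\xi$. The points on the remaining sections are covered by the sections $C_i$ themselves, and we use the recursion on $k$ to handle them. Concretely, I would prove the statement by induction on $u$, using the form of the bound $a\geq \lceil u/\ell\rceil s_1+s_u$. Removing the last section $C_u$ and its $s_u$ points leaves a configuration still covered by the hypothesis, since $a-s_u$ is still at least $\lceil (u-1)/\ell\rceil s_1$ up to the last term. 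The element
\[D=D'+\sum_{p\in S_u}F_p,\qquad D'\in|\mc I_{S\setminus S_u}(k,a-s_u)|,\]
lies in $|\mc I_S(k,a)|$. Symmetrically, elements of the form $C_u+D''$ with $D''\in|\mc I_{S\setminus S_u}(k-1,a)|$ also lie in the system, after checking the degree shift $\zeta=[\Delta]+m\xi$ in the class computation. By induction, neither $D'$ nor $D''$ has a fixed component apart from what is forced, and the fixed part of $|\mc I_S(k,a)|$ is contained in the intersection of the fixed parts of these two families.

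The key step is then to compare these families. Because the two pieces are general in their respective systems, the only candidates for a common fixed component are components of $\sum_{p\in S_u}F_p$ and of $C_u$ itself. The second family avoids the $F_p$: a general $D''$ does not contain the fibre through a general point of $C_u$, since $|\mc I_{S\setminus S_u}(k-1,a)|$ is base-point free away from $S\setminus S_u$ by induction plus Lemma \ref{good lem}/Corollary \ref{main coro}. The first family avoids $C_u$: a general $D'$ does not contain $C_u$, because $(k,a-s_u)$ still has $\zeta\cdot(k\zeta+(a-s_u)\xi)=km+a-s_u\geq s_{u-1}$-type room and the points of $S\setminus S_u$ do not lie on $C_u$. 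The base case $u=0$ is the statement that $|\mcO_{\mathbb F_m}(k,a)|$ with $a\geq0$ is base-point free.

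The main obstacle is making the induction respect the $\lceil u/\ell\rceil$ in the bound rather than a cruder bound like $us_1$. For this I would run the induction in rounds of $\ell$ sections at a time. In each round, the $\ell$ sections $C_{u-\ell+1},\dots,C_u$ are peeled off by using, for each $j$ in the round, the curve $\sum_{i\neq j}C_i+\sum_{p\in S_j}F_p$ inside the round. This costs only $s_j\leq s_1$ units of $a$ and $\ell-1$ units of $k$, which is where $k\geq\ell-1$ enters. These $\ell$ reducible curves have no common component, since $C_j$ is omitted from the $j$-th one and $F_p$, $p\in S_j$, appears only in the $j$-th one. Hence each round consumes $s_1$ from $a$, and the final $+s_u$ absorbs the last partial round. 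I expect verifying that the residual class stays effective with the right non-negativity, together with the generality needed to invoke Lemma \ref{good lem} on the residual systems, to be the delicate part.
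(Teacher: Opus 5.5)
There is a genuine gap: the induction does not close under the hypothesis $k\geq\ell-1$.

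Your reduction principle is sound. For every subfamily of the form $A+|\mc I_{S'}(k',a')|$, the fixed part of $|\mc I_S(k,a)|$ lies under $A+\mathrm{Fix}|\mc I_{S'}(k',a')|$. The trouble is the induction feeding it.

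\textbf{Each step spends $k$, not only $a$.} Every step that uses sections as components costs $\zeta$-degree. The family $C_u+D''$ costs one unit of $k$. A round $\sum_{i\in R,\,i\neq j}C_i+\sum_{p\in S_j}F_p$ costs $\ell-1$ units of $k$, not only $s_j$ units of $a$. Since $u$ is not bounded in terms of $k$, your bookkeeping really requires something like $k\geq(\ell-1)\lceil u/\ell\rceil$.

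Concretely, take $k=\ell-1$ and $u>\ell$. After one round every residual system is $|\mc I_{S'}(0,a')|$ with $S'\neq\emptyset$. Members of $|\mcb O_{\mathbb F_m}(0,a')|$ are sums of fibres, so the fibres through $S'$ lie in every $D_j$ and survive your comparison. The same remark shows that for $\ell=1$ the statement genuinely needs $k\geq 1$.

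\textbf{The other family does not help.} Invoking induction on $|\mc I_{S\setminus S_u}(k,a-s_u)|$ needs $a-s_u\geq\lceil(u-1)/\ell\rceil s_1+s_{u-1}$. You only have $a-s_u\geq\lceil u/\ell\rceil s_1$. For $a$ at the bound this fails whenever $\ell\nmid u-1$, since then the two ceilings agree.

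\textbf{There is also a structural obstruction.} Take $\ell\geq 2$, $k=\ell-1$, all $s_i=s$ and $u$ large. A divisor whose components are $C_i$'s, fibres and the directrix contains at most $k$ of the $C_i$. It therefore needs about $(u-k)s$ fibres, while its class allows at most $a+km\approx(u/\ell+1)s+km$ of them. So some curve that does not split into sections and fibres must pass through points of many sections. Your only control over such a curve is the induction hypothesis, used outside its range.

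Separately, Lemma \ref{good lem} and Corollary \ref{main coro} only count conditions. They do not give the base-point-freeness you invoke.

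\textbf{The missing idea} is the one you discarded in your second paragraph. ``One section per block'' is impossible for general points. The paper makes it legitimate by degeneration and semicontinuity, rather than by exhibiting members of the system. It shows that suitable test points impose independent conditions in addition to $S$:
\begin{itemize}
\item two further points on each $C_i$, to exclude the $C_i$ as fixed components;
\item a point on the fibre through a point of $S$, to exclude fibres;
\item a point of a general member of the pencil $V(u_1-\lambda t_0^mu_0)$, together with a subresultant argument, for all other components.
\end{itemize}
Maximal rank is an open condition, so it suffices to check it after specialising $C_i=V(u_1-\lambda_it_0^mu_0)$ with $\lambda_i=\lambda_j$ for $i\equiv j\bmod\ell$. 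Then the $u$ sections collapse onto $\ell$ distinct ones, each carrying at most $\lceil u/\ell\rceil s_1$ points, and Lemma \ref{easy h0} applies. This is exactly where $k\geq\ell-1$ and the bound on $a$ enter.

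For comparison, your fibre idea alone does prove the case $\ell=1$, $k\geq1$. For $u\geq 2$, compare $\sum_{p\in S_u}F_p+|\mc I_{S\setminus S_u}(k,a-s_u)|$ with $\sum_{p\in S_{u-1}}F_p+|\mc I_{S\setminus S_{u-1}}(k,a-s_{u-1})|$. These spend only $a$ and preserve the bound $us_1+s_u$. For $u=1$, use $C_1+|\mcb O_{\mathbb F_m}(k-1,a)|$. Gaining the factor $\ell$ is what requires letting sections coalesce.
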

	\begin{proof}
		We shall prove first that, for a general choice of sections of $\pi$ and general points on these sections, $\Sigma:=|\mc I_S(u,a))|$ does not have any fixed component of class $(1,0)$ or $(0,1)$.
		\vskip 1 em
		If there were a component of class $(1,0)$ it would need to be one of the sections $C_i$. Now, this implies that, choosing any $2$ points in $C_i$ different from those in $S_i$, these do not impose any conditions on $\Sigma:=|\mc I_S(k,a)|$. In particular, we need only to prove that, choosing $2$ general points in each of the sections, these impose $2u$ conditions on $\Sigma$. Again doing this only requires exhibiting one example. 
		\vskip 0.5 em
		Choose $\lambda_1,\dots,\lambda_u\in \mcb k$ and $h_i=\lambda_it_0^m$. Choose coordinates $\left(([t_{0,i,j}:t_{1,i,j}])_{i=1}^{s_j+2}\right)_{j=1}^u$ on $\left(\Proj\right)^{s+2u}$ and define:
		\[ev_S:\prod_{i=0}^u H^0(\mcb O_{\Proj}((k-i)m+a))\ra \mcb k^{\oplus s+2u}\]
		\[(f_k,\dots,f_0)\mapsto \left(\left(\sum_{i=0}^k f_i(t_{0,\ell,j},t_{1,\ell,j})h_{\ell}(t_{0,\ell,j},t_{1,\ell,j})^i\right)_{j=1}^{s_i+2}\right)_{l=1}^u\]
		By construction, fixing $\mbd q\in \left(\Proj\right)^{s+2u}$ this map corresponds to evaluating sections of $\mcb O_{\mathbb F_m}(k,a)$ at $S(\mbd q):=\bigcup_{i=1}^u S(\mbd q)_i$ where $S(\mbd q)_i$ is the set of points of $C_i:=V(x_1-h_ix_0)$ that project to $\{q_{j,i}\}_{j=1}^{s_i+2}$. Thus, we need only to show that for some $\lambda_1,\dots,\lambda_u$, $[t_{0,i,j}:t_{1,i,j}]$ $ev_S$ has maximal rank. Choosing $\lambda_\ell=\lambda_j$ for $\ell=j$ $mod$ $\ell+1$, Lemma \ref{easy h0} readily gives the result.
		\vskip 1 em
		Similarly if $\Sigma$ had a fixed component of class $(0,1)$, it would be a line passing through one of the points of $S$. Now, choosing the $\lambda$-s and $h_i$-s as above, let $\lambda\in \mcb k$ be distinct from the $\lambda_i$-s, $h:=\lambda t_0^m$ and consider for each $(b,c)\in \{(i,j)\ |\ 1\leq i \leq s_j,\ 1\leq j\leq u\}$:
		\[ev_S^{b,c}:\prod_{i=0}^k H^0(\mcb O_{\Proj}((k-i)m+a))\ra \mcb k^{\oplus s+1}\]
		\[(f_k,\dots,f_0)\mapsto \left(\left(\left(\sum_{i=0}^k f_i(t_{0,\ell,j},t_{1,\ell,j})h_{l}(t_{0,\ell,j},t_{1,\ell,j})^i\right)_{j=1}^{s_i}\right)_{\ell=1}^u,\sum_{i=0}^k f_i(t_{0,b,c},t_{1,b,c})h(t_{0,b,c},t_{1,b,c})^i\right)\]
		As above, $ev_S^{b,c}$ corresponds to the evaluation of global sections of $\mcb O_{\mathbb F_m}(k,a)$ at $S$ and at a point lying on the same line as the $b$-th point on $C_c$. Hence, reasoning as above, for general choices of the $\lambda$-s and the points of $S$, $ev_S^{b,c}$ has maximal rank. The finiteness of the set of $(b,c)$-s then implies that, for a general choice of $S$ and of the sections, $\Sigma$ will not have a fixed component of type $(0,1)$.
		\vskip 2 em
		Now, if $\Sigma$ had a fixed component, since it cannot be of type $(0,1)$ or $(1,0)$, it will intersect the general member of the pencil $\{C_\lambda:=V(x_1-\lambda t_0^m x_0)\}_{\lambda\in \mcb k}$ transversely in one or more points.
		\vskip 0.5 em
		\textit{Note:} We have to rule out the case of the component being of type $(1,0)$ to be able to treat also the case $m=0$. Notice also that the fixed component cannot be the directrix of $\mathbb F_m$ as, by assumption, no points of $S$ lie on it.
		\vskip 0.5 em
		Reasoning as above, this would imply that for general $\lambda\in \mathbb A^1_{\mcb k}$, there exists $t_0,t_1$ such that the following matrix does not have maximal rank:
		\[\left(\begin{array}{cccccc}
			t_{0,1,1}^{um+a} & \dots & t_{1,1,1}^{um+a} & \lambda_1t_{0,1,1}^{(u-1)m+a} & \dots & \lambda_1^ut_{1,1,1}^a\\
			\dots & \dots & \dots & \dots & \dots & \dots\\
			t_{0,s_1,1}^{um+a} & \dots & t_{1,s_1,1}^{um+a} & \lambda_1t_{0,s_1,1}^{(u-1)m+a} &\dots & \lambda_1^ut_{1,s_1,1}^a\\
			t_{0,1,2}^{um+a} & \dots & t_{1,1,2}^{um+a} & \lambda_2t_{0,1,2}^{(u-1)m+a} &\dots & \lambda_2^ut_{1,1,2}^a\\
			\dots &  \dots & \dots & \dots & \dots & \dots\\
			t_{0,s_k,k}^{um+a} & \dots & t_{1,s_k,k}^{um+a} & \lambda_kt_{0,s_k,k}^{(u-1)m+a} &\dots & \lambda_k^ut_{1,s_k,k}^a\\
			t_{0}^{um+a} & \dots & t_{1}^{um+a} & \lambda t_{0}^{(u-1)m+a} &\dots & \lambda^ut_{1}^a
		\end{array}\right)\]
		In particular, for any $\lambda$, the $(s+1)\times (s+1)$ minors of the above matrix all have a common root. Hence, for example by Theorem $8$ of \cite{subresultant}, their $0$-th subresultant is identically $0$ as a polynomial in $\lambda$.
		At this point, specialising to the case $\lambda_i=\lambda_j$ whenever $i=j$ $mod$ $\ell$ and reasoning as before gives the desired contradiction.
	\end{proof}
	\begin{oss}
		The proof of the previous lemma also shows that $h^1(\mc I_{S\cup p}(k,a))=0$ for a general $p\in \mathbb F_m$.
	\end{oss}
	As a corollary we get:
	\begin{cor}\label{step 2}
		With the notation of the previous lemma, for general $C_1,\dots,C_{u}$, $\ell\geq 1$, $k\geq \ell-1$ and $a\geq \left(\big\lceil \frac{u}{\ell}\big\rceil +1\right)s_1$, the linear system $\Sigma:=|\mc I_S(k,a)|$ has no base point outside of $S$ and its general member is irreducible.
	\end{cor}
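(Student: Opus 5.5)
The plan is to prove the two claims separately: first base-point freeness of $\Sigma$ away from $S$, then irreducibility of its general member via Bertini. The key observation is that the hypothesis $a\geq \left(\lceil \frac{u}{\ell}\rceil+1\right)s_1$ exceeds the bound of Lemma \ref{main lem2} by at least $s_1\geq s_u$. This slack can be spent either on a single extra point or on absorbing the fibre of $\pi$ through it.

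\textbf{Step 1 (no base points outside $S$).} Fix $p\in\mathbb F_m\setminus S$. We must show that $p$ imposes one further condition on $|\mc I_S(k,a)|$, i.e. $h^0(\mc I_{S\cup p}(k,a))=h^0(\mc I_S(k,a))-1$. The remark following Lemma \ref{main lem2} settles this for a general $p$. A general point does not suffice, however, so for arbitrary $p$ we argue by cases.

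\emph{Case $p\notin\bigcup_i C_i$.} Take the section $C_p$ through $p$; it is the member of a pencil $\{C_\lambda\}$ as in the proof of Lemma \ref{main lem2}. Adding $p$ to $S$ as an extra set $S_{u+1}=\{p\}$ of one point on a further section changes $\lceil u/\ell\rceil$ by at most $1$. That increase is covered by the extra $s_1$ in the bound on $a$, since $\lceil (u+1)/\ell\rceil s_1+1\leq (\lceil u/\ell\rceil+1)s_1$ when $s_1\geq1$. The maximal-rank argument of Lemma \ref{main lem2} then runs with $\lambda$ as a parameter, using the subresultant argument already used there, so it holds for every $\lambda$ and not only a general one.

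\emph{Case $p\in C_i\setminus S_i$.} Here we instead use the fibre $F$ of $\pi$ through $p$. Consider the curves $F+D$ with $D\in|\mc I_{S\setminus F}(k,a-1)|$. Lemma \ref{main lem2} still applies to $(k,a-1)$, because $a-1\geq\lceil u/\ell\rceil s_1+s_u$. Hence $D$ can be chosen to avoid $p$, which gives a member of $\Sigma$ missing $p$ unless $p\in F$. To handle that, we use the extra room once more: combine the section $C_i$ with $|\mc I_{S\setminus S_i}(k-1,a)|$, which again satisfies the lemma's hypotheses. A section of $\mcb O_{\mathbb F_m}(k,a)$ vanishing on $S$ but not at $p$ then comes from a member of $|\mc I_{S\setminus S_i}(k-1,a)|$ that avoids $p$, together with a correction term.

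I expect this special-position case to be the main obstacle. The cleanest route is probably to prove the stronger statement $h^1(\mc I_{S\cup p}(k,a))=0$ for every $p$, by reducing it to the independence statement of Lemma \ref{good lem}, where the $S_i$ together with $p$ satisfy (\ref{star}) because $a$ is large.

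\textbf{Step 2 (irreducibility).} Once $\Sigma$ is base-point free off $S$, consider the map $\phi_\Sigma$ defined on $\mathbb F_m\setminus S$. By Lemma \ref{main lem2}, $\Sigma$ has no fixed components. By Bertini's theorem, the general member is irreducible unless $\phi_\Sigma$ is composed with a pencil. To exclude this, we show that $\phi_\Sigma$ has two-dimensional image. Since $a$ is large, $\Sigma$ contains $D+F_1+\dots+F_r$ for a fixed $D\in|\mc I_S(k,a-r)|$ and arbitrary fibres $F_j$, and also contains curves of the form $C_{i}+D'$. These two families cannot all be fibres of a single pencil, since the fibres $F_j$ move freely while the $C_i$ are horizontal. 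So $\dim \phi_\Sigma(\mathbb F_m)=2$, and Bertini gives irreducibility of the general member; a connectedness argument as in Step II of \cite{arbarello1981} handles the points of $S$.
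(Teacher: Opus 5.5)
Your Step 2 is essentially the paper's irreducibility argument. Lemma \ref{main lem2} gives no fixed component. $\Sigma$ is not composed with a pencil because it contains members having a freely moving fibre of $\pi$ as a component. Kleiman's Bertini theorem then applies. This step does not depend on Step 1, since Bertini only needs the rational map $\phi_\Sigma$; the paper in fact proves irreducibility first.

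The genuine gap is Step 1: none of your arguments reaches a \emph{non-general} point $p$, and that is the whole content of the claim.
\begin{itemize}
\item \emph{First case.} Lemma \ref{main lem2} is about general sections and general points, and it only concludes that there is no fixed component. The subresultant step in its proof shows that a certain polynomial in $\lambda$ is not identically zero, i.e.\ that the \emph{generic} $C_\lambda$ carries no base point. Isolated base points would lie on the finitely many $C_\lambda$ where that polynomial vanishes. So ``for every $\lambda$'' is exactly what has to be proved, and semicontinuity runs the wrong way for a special $p$. Moreover, no $C_\lambda$ passes through a point of the directrix.
\item \emph{Second case.} Both test families are useless, because $p\in F$ and $p\in C_i$ by construction, so every $F+D$ and every $C_i+D'$ passes through $p$. ``$D$ can be chosen to avoid $p$'' confuses having no fixed component with $p$ not being a base point. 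The ``correction term'' is never specified. Also, $a-1\geq\lceil u/\ell\rceil s_1+s_u$ fails when $s_1=s_u$.
\item \emph{Fallback.} Going through Lemma \ref{good lem} again requires $p$ to be general.
\end{itemize}

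What is missing is a way to transport the general-point statement to an arbitrary $p\notin S$. That statement is the Remark after Lemma \ref{main lem2}: $h^1(\mc I_{S\cup q}(k,a))=0$ for general $q$. The paper does the transport after proving irreducibility. For fixed $p$ it takes a general curve $C\ni p$ of class $(1,c)$ with $c\gg0$. It then argues, via Hilbert irreducibility, that the incidence $\Phi_C=\{(f,q)\in\Sigma\times C\ |\ f(q)=0\}$ is irreducible; this is where irreducibility of the general member of $\Sigma$ is used. Hence $\Phi_C\ra C$ is flat with equidimensional fibres, so the fibre over $p$ has dimension $\dim\Sigma-1$, as over a general $q$.

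Alternatively, your instinct to use the fibre $F$ through $p$ does work, but through the residual sequence rather than through members of $\Sigma$ containing $p$:
\[0\ra \mc I_{S\setminus F}(k,a-1)\ra \mc I_{S\cup p}(k,a)\ra \mc I_{(S\cup p)\cap F,F}(k,a)\ra 0 .\]
The points of $S$ have distinct images in $\Proj$, so $(S\cup p)\cap F$ has length at most $2$ on $F\cong\Proj$. There the bundle has degree $(k\zeta+a\xi)\cdot\xi=k\geq1$, so the right-hand $H^1$ vanishes. Thus $h^1(\mc I_{S\cup p}(k,a))=0$ for \emph{every} $p\notin S$ follows from $h^1(\mc I_S(k,a-1))=0$. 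That is a statement about the general configuration alone, and the Remark after Lemma \ref{main lem2} supplies it as soon as $a-1\geq\lceil u/\ell\rceil s_1+s_u$.
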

	\begin{proof}
		Notice that, by the previous lemma, the linear system has no fixed component. Now, since the linear system is not composed with a pencil (for dimension reasons one can find an element of the linear system with a line as a component and thus find a member of $\Sigma$ whose components have non-proportional numerical class), by Theorem 5.3 of \cite{kleimBertini}, the general member of $\Sigma$ is irreducible. In particular, since $\mcb k$ is infinite and hence $\mcb k(t_1,\dots,t_{\mathrm{dim}\Sigma})$ Hilbertian and since, calling $X_\eta$ the generic fibre of the universal curve over $\Sigma$, for $c\gg 0$, $\mathrm{deg}(\mc I_p(1,c)_{|_{X_\eta}})$ is greater than $\#S+\mathrm{deg}(\omega_{X_\eta})$,
		the fibre over the generic point of $\Sigma$ of:
		\[p_1:\Phi_C:=\{(f,q)\in \Sigma\times C\ |\ f(q)=0\}\ra \Sigma\]
		is irreducible (for example by Lemma $8.6.13$ in \cite{kollar}). Hence, for a general $C\in |\mc I_p(1,c)|$, so is $\Phi_C$.
		\vskip 2 em 
		Choosing any such smooth $C$, the second projection $p_2:\Phi_C\ra C$ is then a dominant morphism from a variety to a nonsingular curve and is hence flat. In particular, the dimension of its fibres is constant. Thus, since by the previous lemma the fibre over a general point $q\in C$ has the expected dimension, so do all the others.  
	\end{proof}
	Notice also that, since Lemma \ref{easy h0} can be used not only to show that passing thorugh some points in the $C_i$-s imposes independent conditions on $|\mcb O_{\mathbb F_m}(l,a)|$ but also that the same is true for having the same jets at those points as the $C_i$-s do, one can analogously prove:
	\begin{cor}\label{smooth coro}
		With the notation of the previous corollary, the general member of the linear system $|\mc I_S(k,a)|$ is smooth.
	\end{cor}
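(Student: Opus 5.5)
By Bertini's theorem (characteristic $0$), the general member of $\Sigma:=|\mc I_S(k,a)|$ is smooth away from the base locus of $\Sigma$. By Corollary \ref{step 2}, that base locus is exactly $S$, so it suffices to show that the general member is smooth at each point $p\in S$. Since $S$ is finite, I will work one point at a time. For a fixed $p\in S_i\subseteq C_i$, the members of $\Sigma$ singular at $p$ are those whose $1$-jet at $p$ vanishes, i.e. those in $H^0(\mc I_{S\cup Z_p}(k,a))$, where $Z_p$ is the length-$3$ subscheme $\mathrm{Spec}(\mcb O_{\mathbb F_m,p}/\mathfrak m_p^2)$. It is enough to prove that this locus is a proper subspace of $H^0(\mc I_S(k,a))$. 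Finitely many proper subspaces cannot cover the whole space, so the general member is then smooth at every point of $S$.

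Vanishing to order two at $p$ is two extra conditions beyond vanishing at $p$, and I will split them along the two directions of $T_p\mathbb F_m$: the direction tangent to $C_i$ and the fibre direction. It suffices that one of them be independent of the conditions imposed by $S$. The tangential one is the natural choice, because Lemma \ref{easy h0} handles curvilinear subschemes of the sections. Replacing $S_i$ by $S_i+p$ as a divisor on $C_i$, i.e. requiring contact of order $2$ with $C_i$ at $p$, raises $s_i$ by one. If the numerical hypotheses of the lemma still hold after this change, we get $h^0(\mc I_{S'}(k,a))=h^0(\mc I_S(k,a))-1$, where $S'$ is $S$ with the doubled point. Hence a general member of $\Sigma$ meets $C_i$ transversally at $p$, and a curve transverse to a smooth curve at $p$ is smooth at $p$. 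This is the jet version of Lemma \ref{easy h0} that the remark preceding the corollary refers to.

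To get into the setting of Lemma \ref{easy h0} while keeping the sections general, I would repeat the reduction from the proof of Lemma \ref{main lem2}. I take $h_j=\lambda_j t_0^m$ and specialise the $\lambda_j$ periodically modulo $\ell$, which groups the $u$ sections into at most $\ell\le k+1$ distinct ones carrying at most $\lceil u/\ell\rceil s_1$ points each. The bound $a\ge(\lceil u/\ell\rceil+1)s_1$ leaves room for one extra point on each grouped section. Lemma \ref{easy h0} then applies, giving maximal rank of the augmented evaluation map at this special configuration. By semicontinuity, maximal rank holds for general sections and general points.

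The main obstacle is this numerical check. I need the hypothesis $s_i\le \min\{s_{i+1},\mathrm{Max}\{im+a+1,0\}\}$ of Lemma \ref{easy h0} to survive both the grouping and the addition of the tangency point. Only the tangency point ($+1$) can break it, since the slack $s_1$ in $a$ absorbs much more than that. If an ordering issue arises, I would reorder the grouped sections so that the one containing $p$ sits last, where the allowed degree $km+a+1$ is largest.
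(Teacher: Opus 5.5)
Your proposal is correct and is essentially the paper's argument. The paper gives no separate proof; it only remarks that Lemma \ref{easy h0} also shows that imposing the $1$-jet of $C_i$ at points of $S_i$ gives independent conditions, and that one then argues as in Lemma \ref{main lem2} and Corollary \ref{step 2}. That is exactly your combination of Bertini away from the base locus $S$, contact of order $2$ with $C_i$ imposing one extra condition (so a general member is transverse to $C_i$, hence smooth, at each $p\in S$), and the periodic specialisation of the $\lambda_j$ together with semicontinuity. Your numerical check also goes through: after grouping, every section carries at most $\lceil u/\ell\rceil s_1+1\le a+1$ conditions, so the hypotheses of Lemma \ref{easy h0} hold once the grouped sections are sorted by number of points.
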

	\begin{oss}
		The above corollaries do not give sharp bounds on $a$ as one can go through the proof of Lemma \ref{main lem2} and easily get a better bound.
	\end{oss}
	The above corollary then allows us to improve Step I and II in the proof of Lemma $1$ of \cite{bal02}.
	\begin{lem}\label{step 1}
		Let $u$ be a non-negative integer and let $C_1,\dots, C_{u}$ be general sections of $\mathbb F_m\ra \Proj$. Let for each $i$, $S_i$ be a collection of $s_i$ general points in $C_i$ with $s_1\geq\dots \geq s_u$. Let $S^{(1)}$ be the first order thickening of $S:=S_1\cup\dots \cup S_u$ in $\mathbb F_m$.
		Then, for each $a\geq 2(\big\lceil\frac{2u}{k}\big\rceil+1)s_1$, $\ell\geq k$
		\[h^1(\mathbb F_m,\mc I_{S^{(1)}}(\ell,a))=0\]
	\end{lem}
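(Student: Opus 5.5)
We will show that $S^{(1)}$ imposes $3\delta$ independent conditions on $H^0(\mathbb F_m,\mcb O_{\mathbb F_m}(\ell,a))$, where $\delta=\#S$. The vanishing $h^1(\mathbb F_m,\mc I_{S^{(1)}}(\ell,a))=0$ is equivalent to this, since $h^1(\mcb O_{\mathbb F_m}(\ell,a))=0$ for $a\geq 0$. As in Step I of Lemma $1$ of \cite{bal02}, the plan is the Terracini/Horace style argument: add the points of $S$ one at a time and show that the double structure at each new point imposes $3$ new conditions on the linear system already vanishing doubly at the previous points.

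First we replace $u$ by an effective number of sections. Set $\ell'=\lceil k/2\rceil$, so that $\lceil u/\ell'\rceil\leq\lceil 2u/k\rceil+1$. Exactly as in the proofs of Lemma \ref{good lem} and Lemma \ref{main lem2}, we may specialise the sections by taking $h_i=\lambda_i t_0^m$ with $\lambda_i=\lambda_j$ whenever $i\equiv j \bmod \ell'$. Since the rank of the evaluation map is lower semicontinuous, it suffices to treat this special configuration. There the points of $S$ lie on at most $\ell'$ sections, each carrying at most $\lceil u/\ell'\rceil s_1$ points.

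The induction runs on the number of points. Let $T\subseteq S$ and $p\in S\setminus T$, and set $W:=T^{(1)}\cup\{p\}$. We need $p^{(1)}$ to impose two conditions beyond the single one imposed by $p$ on $H^0(\mc I_{T^{(1)}}(\ell,a))$. This means that the sections of $\mc I_W(\ell,a)$ restricted to the tangent directions at $p$ must span $T_p^\vee$. It is enough to know that no fixed component of $|\mc I_W(\ell,a)|$ passes through $p$, and that the system has no base point near $p$ apart from $p$ itself and separates tangent directions there.

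We obtain this by writing $\mc O(\ell,a)=\mc O(\ell_1,a_1)\otimes\mc O(\ell_2,a_2)$ with $\ell_1+\ell_2=\ell$, $\ell_i\geq \ell'-1$, and $a_1,a_2\geq(\lceil u/\ell'\rceil+1)s_1$. The hypothesis $a\geq 2(\lceil 2u/k\rceil+1)s_1$ is exactly what makes this splitting possible, and $\ell\geq k$ gives $\ell_i\geq\lceil k/2\rceil-1$. By Corollary \ref{step 2} and Corollary \ref{smooth coro}, the systems $|\mc I_S(\ell_i,a_i)|$ have irreducible smooth general members and no base points outside $S$. Products $\sigma_1\sigma_2$ with $\sigma_i\in H^0(\mc I_S(\ell_i,a_i))$ vanish to order $2$ along $S$. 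For each direction $v\in T_p\mathbb F_m$, we can choose $\sigma_1$ smooth at $p$ with tangent line not containing $v$, and $\sigma_2$ with $\sigma_2$ vanishing on $S\setminus\{p\}$ but nonzero at $p$. For the latter we use $|\mc I_{S\setminus p}(\ell_2,a_2)|$, which has no base point at $p$ by Corollary \ref{step 2} applied to $S\setminus\{p\}$. The product then lies in $H^0(\mc I_W(\ell,a))$ and has nonzero derivative in direction $v$. Two independent directions give the two extra conditions, and iterating over all points yields $3\delta$ conditions.

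The hardest step will be justifying that Corollary \ref{step 2} applies to both factors with the split degrees $(\ell_i,a_i)$ and the reduced number $\ell'$ of distinct sections. The parameter condition $k\geq \ell-1$ in Corollary \ref{step 2} has to be matched with $\ell_i$ and $\ell'$. If an integer mismatch appears, I would instead apply Lemma \ref{main lem2} directly, with its parameter $\ell$ chosen as $\ell'$. Finally, one must check that the specialisation $\lambda_i=\lambda_j$ does not break the ``no base point at $p$'' statement for $S\setminus\{p\}$; I would verify this again through Lemma \ref{easy h0} on the merged sections.
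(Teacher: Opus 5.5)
Your numerical worries are unfounded. The real gap is the first-order claim: for every direction $v\in T_p\mathbb F_m$ you assert that you can pick $\sigma_1\in H^0(\mathcal I_S(\ell_1,a_1))$ smooth at $p$ whose tangent line avoids $v$. Since $\sigma_1(p)=0$, the differential of $\sigma_1\sigma_2$ at $p$ is $\sigma_2(p)\,d\sigma_{1,p}$. So your products only reach the span of $\{d\sigma_{1,p}\}$. Two extra conditions at $p$ require this span to be all of $T_p^\vee\mathbb F_m$, i.e.\ $(S\setminus p)\cup p^{(1)}$ must impose independent conditions on $|\mathcal O_{\mathbb F_m}(\ell_1,a_1)|$. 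That is a one-double-point analogue of the statement you are proving. Corollaries \ref{step 2} and \ref{smooth coro} do not give it. A system whose general member is irreducible and smooth at $p$, with no base points off $S$, can still have every member tangent to one fixed line at $p$; plane conics through $p$ with a prescribed tangent line are an example. As written, your argument only produces two conditions at each point of $S$. One possible repair is to take $\sigma_1$ from the factor of larger first degree, $\ell_1=\ell-\lceil k/2\rceil+1$. Let $C_i$ be the section through $p$. The members $C_i+G'$, with $G'\in|\mathcal I_{S\setminus S_i}(\ell_1-1,a_1)|$ not through $p$ (Corollary \ref{step 2}), are smooth at $p$ with tangent $T_pC_i$. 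You would then still need a member of $|\mathcal I_S(\ell_1,a_1)|$ not tangent to $C_i$ at $p$, e.g.\ via the jet version of Lemma \ref{easy h0} alluded to before Corollary \ref{smooth coro}. Some argument of this kind has to be supplied.

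Second, drop the preliminary specialisation $\lambda_i=\lambda_j$ for $i\equiv j \bmod \lceil k/2\rceil$. The semicontinuity direction is correct, but after merging the sections are no longer general, so Corollaries \ref{step 2} and \ref{smooth coro} do not apply. If you view the merged configuration as $\lceil k/2\rceil$ general sections carrying up to $\lceil u/\lceil k/2\rceil\rceil s_1$ points each, Corollary \ref{step 2} would in general demand $a_i\geq 2\lceil u/\lceil k/2\rceil\rceil s_1$, which can exceed what the hypothesis provides. That specialisation already lives inside the proof of Lemma \ref{main lem2}. Applied directly to general sections with parameter $\lceil k/2\rceil$, Corollary \ref{step 2} covers both $(\lceil k/2\rceil-1,a_2)$ and $(\ell-\lceil k/2\rceil+1,a_1)$, with $a_1+a_2=a$ and $a_i\geq(\lceil 2u/k\rceil+1)s_1$. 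This works because $\lceil u/\lceil k/2\rceil\rceil\leq\lceil 2u/k\rceil$, which is sharper than the bound you wrote, so there is no integer mismatch.

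For comparison, the paper avoids tangent separation altogether. It takes one general $C_0\in|\mathcal I_S(\lceil k/2\rceil-1,(\lceil 2u/k\rceil+1)s_1)|$, smooth at $S$, so the residual of $S^{(1)}$ with respect to $C_0$ is $S$. It then uses
\[0\to\mathcal I_S\left(\ell-\lceil k/2\rceil+1,\,a-(\lceil 2u/k\rceil+1)s_1\right)\to\mathcal I_{S^{(1)}}(\ell,a)\to\mathcal I_{2S,C_0}(\ell,a)\to 0.\]
It kills $h^1$ of the right term by comparing its degree with $2g(C_0)-2$, and of the left term by interpolation of simple points. There, smoothness of $C_0$ at $S$ is exactly enough, whereas your product construction needs the stronger first-order statement above.
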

	\begin{proof}
		Let $C_0$ be a general curve of class $(\big\lceil \frac{k}{2}\big\rceil-1,(\big\lceil\frac{2u}{k}\big\rceil+1)s_1)$ passing through $S$. By the previuous corollary, the residual scheme of $S^{(1)}$ with respect to $C_0$ is $S$. 
		This translates to the exactness of:
		\[0\ra \mc I_S\left(\ell-\bigg\lceil \frac{k}{2}\bigg\rceil+1,a-\left(\bigg\lceil\frac{2u}{k}\bigg\rceil+1\right)s_1\right)\ra \mc I_{S^{(1)}}(\ell,a)\ra \mc I_{2S,C_0}\left(\ell,a\right)\ra 0\]
		Now, $2S$ is a Cartier divisor of degree $2\delta$ on $C_0$.
		Since, by definition, one has that:
		\[\mathrm{deg}(\mc I_{2S,C_0}(\ell,a))=C_0.(\ell\zeta+a\xi)-2\sum_{i\leq u}s_i=\]
        \[=\left(\bigg\lceil \frac{k}{2}\bigg\rceil-1\right)\ell m+\left(\bigg\lceil\frac{2u}{k}\bigg\rceil+1\right)\ell s_1+\left(\bigg\lceil \frac{k}{2}\bigg\rceil-1\right)a-2\sum_{i\leq u}s_i\geq\]
		\[\geq \left(\bigg\lceil \frac{k}{2}\bigg\rceil-1\right)km+\left(\bigg\lceil\frac{2u}{k}\bigg\rceil+1\right)ks_1+2\left(\bigg\lceil\frac{2u}{k}\bigg\rceil+1\right)s_1\left(\bigg\lceil \frac{k}{2}\bigg\rceil-1\right)-2us_1\geq\]
		\[\geq 2\binom{\lceil \frac{k}{2}\rceil-1}{2}m+2\left(\bigg\lceil \frac{k}{2}\bigg\rceil-1\right)\left(\left(\bigg\lceil\frac{2u}{k}\bigg\rceil+1\right)s_1-1\right)\] and, by adjunction, the degree of the canonical divisor of $C_0$ is given by
		\[2\binom{\lceil \frac{k}{2}\rceil-1}{2}m+2\left(\left(\bigg\lceil\frac{2u}{k}\bigg\rceil+1\right)s_1-1\right)\left(\bigg\lceil \frac{k}{2}\bigg\rceil-2\right)-2,\]
		the line bundle $\mc I_{2S,C_0}(\ell,a)$ is non-special. In particular, $h^1(\mc I_{2S,C_0}(l,a))=0$.\\
		The lemma now follows by noticing that, since $\ell\geq k$, $a\geq2(\lceil\frac{2u}{k}\rceil+1)s_1$, we also have the vanishing of $h^1(\mc I_S(\ell-\lceil \frac{k}{2}\rceil+1,a-(\lceil\frac{2u}{k}\rceil+1)s_1))$.\\
	\end{proof}
	An immediate consequence of the previous lemma is the following corollary. The proof follows the same path as the one of Step II of \cite{bal02}. We decided to include it for completeness:
	\begin{cor}\label{2nd order}
		With the notation of the previous lemma, let $p$ be a point of $S$ and let $p^{(2)}$ be the second order thickening of $p$ in $\mathbb F_m$. Then, for each $a\geq 2(\lceil\frac{2u}{k}\rceil+1)s_1+1$, $\ell\geq k$:
		\[h^1(\mc I_{S^{(1)}\cup p^{(2)}}(\ell,a))=0\]
	\end{cor}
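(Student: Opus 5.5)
The idea is to peel off the extra conditions imposed by $p^{(2)}$ on top of $S^{(1)}$ with a residual exact sequence, following Step II of \cite{bal02}. Since $p\in S$, we have $p^{(1)}\subseteq S^{(1)}$, so $p^{(2)}$ adds exactly three length-one conditions: the second-order jets at $p$, which form a length three scheme modulo $p^{(1)}$. I would choose a curve $D$ through $p$ that is smooth at $p$ and of small class, say a general fibre-type curve or a section. The natural candidate is a section of class $(1,0)$ through $p$, e.g.\ the section $C_i$ containing $p$. The plan is then to write the residual sequence
\[0\ra \mc I_{\mathrm{Res}_D(S^{(1)}\cup p^{(2)})}((\ell,a)-[D])\ra \mc I_{S^{(1)}\cup p^{(2)}}(\ell,a)\ra \mc I_{(S^{(1)}\cup p^{(2)})\cap D,D}(\ell,a)\ra 0\]
and show that both outer terms have vanishing $h^1$.

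For the residual term with $D$ of class $(0,1)$ (the fibre $F$ through $p$), the residual scheme of $S^{(1)}\cup p^{(2)}$ with respect to $F$ is contained in $S^{(1)}\cup p^{(1)}=S^{(1)}$. More precisely, off $p$ it is $S^{(1)}$ minus the intersection with $F$, and at $p$ the residual of $p^{(2)}$ is $p^{(1)}$. Since the points of $S$ are general, $F$ meets $S$ only in $p$, so the residual scheme is exactly $S^{(1)}$. Its twist is $\mc I_{S^{(1)}}(\ell,a-1)$, and since $a-1\geq 2(\lceil\frac{2u}{k}\rceil+1)s_1$, Lemma \ref{step 1} gives $h^1=0$. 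This is exactly why the hypothesis on $a$ carries the extra $+1$.

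The restriction to $F\cong\Proj$ is $\mcb O_\Proj(\ell)$ twisted down by a subscheme of length at most $3$ (namely $p^{(2)}\cap F$), so it has degree at least $\ell-3\geq k-3\geq -1$, and hence $h^1=0$ on $\Proj$. The long exact sequence then yields $h^1(\mc I_{S^{(1)}\cup p^{(2)}}(\ell,a))=0$.

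The main point to check is the residual computation: that $\mathrm{Res}_F(p^{(2)})=p^{(1)}$ and that $F\cap S=\{p\}$. The first follows from $\mathfrak m_p^3:(x)=\mathfrak m_p^2$ for a local equation $x$ of $F$. The second holds for general points on general sections, since distinct points of $S$ then have distinct images in $\Proj$. If $k$ is so small that $\ell-3<-1$, I would instead use $D=C_i$: restricted to $C_i$ the bundle has degree $\ell m+a$, which is much larger than $2s_i+3$, and the residual is then $\mc I_{S^{(1)}\setminus\ldots}(\ell-1,a+m)$, handled again by Lemma \ref{step 1}. Since the paper assumes $k\geq5$, the fibre choice is enough.
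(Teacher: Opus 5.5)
Your proposal is correct and is essentially the paper's proof: you use the residual sequence with respect to the fibre through $p$, whose residual scheme is $S^{(1)}$ (handled by Lemma \ref{step 1} with $a-1$), and the restriction $\mc I_{3p,L}(\ell,a)\cong\mcb O_{\Proj}(\ell-3)$ has no $h^1$ because $\ell\geq k\geq 3$. The fallback via $C_i$ is never needed and can be dropped; as written its residual twist is also off, since $C_i$ has class $(1,0)$, so the twist would be $(\ell-1,a)$ rather than $(\ell-1,a+m)$.
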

	\begin{proof}
		Let $L$ be the fibre of $\pi:\mathbb F_m\ra \Proj$ passing though $p$. By construction, the residual scheme of $S^{(1)}\cup p^{(2)}$ with resepct to $L$ is $S^{(1)}$. Hence we have an exact sequence:
		\[0\ra \mc I_{S^{(1)}}(\ell,a-1)\ra \mc I_{S^{(1)}\cup p^{(2)}}(\ell,a)\ra \mc I_{3p,L}(\ell,a)\ra 0\]
		By the above lemma $h^1(\mc I_{S^{(1)}}(\ell,a-1))=0$. Since $k\geq 3$, we also have $h^1(\mc I_{3p,L}(\ell,a))=0$.
	\end{proof}
	\vskip 2 em 
	Thanks to the above lemmas, we may now proceed with the proof of Theorem \ref{final thm}:
	\begin{proof}[\bd{Proof of Theorem \ref{final thm}}]
		Let $S^{(1)}$ be the first order thickening of $S$ in $\mathbb F_m$. By definition, ${C\in|\mc I_{S^{(1)}}(k,a)|:=\Sigma}$ if and only if it is singular at $S$. Define also:
		\[\Sigma_1:=\bigg|\mc I_{S}\left(\bigg\lceil\frac{k}{2}\bigg\rceil -1,\left(\bigg\lceil\frac{2u}{k}\bigg\rceil+1\right)s_1\right)\bigg| \qquad \Sigma_2:=\bigg|\mc I_{S}\left(\bigg\lfloor\frac{k}{2}\bigg\rfloor +1,\left(\bigg\lceil\frac{2u}{k}\bigg\rceil+1\right)s_1\right)\bigg|\]
		Choosing $L_1,\dots,L_{a-2\left(\lceil\frac{2u}{k}\rceil+1\right)s_1}$ fibres of $\pi:\mathbb F_m\ra \Proj$,  by construction one has that 
		\[\Sigma_1+ \Sigma_2+\sum_{i\leq a-2(\lceil\frac{2u}{k}\rceil+1)s_1} L_i\leq \Sigma\]
		Corollaries \ref{step 2} and \ref{smooth coro} guarantee that a general member of $\Sigma$ is singular only over $S$.
		Corollary \ref{2nd order} now shows that a general member of $\Sigma$ has only nodes as singularities.
		\vskip 2 em
		Finally, since both $\Sigma_1$ and $\Sigma_2$ are positive dimensional, $\Sigma$ does not have a fixed component. By Theorem 5.3 of \cite{kleimBertini}, if a general member of $\Sigma$ were to be reducible, $\Sigma$ would be composed with a pencil. Since, for general $C_1\in \Sigma_1$, $C_2\in \Sigma_2$ the components of $C_1\cup C_2\cup L_1\cup\dots\cup L_{a-2(\lceil\frac{2u}{k}\rceil+1)s_1}$ have non-proportional numerical class, this cannot be the case. Hence the general member of $\Sigma$ is irreducible, nodal and singular only over $S$.
	\end{proof}
	From this, working exactly as in the proof of the main theorem of \cite{bal02} or (equiavelently) as in the proof of Theorem $2$ of \cite{coppens2021_scrollar} one gets the proof of Corollary $\ref{coro P^1}$.
	\begin{oss}
		Notice that, although in Corollary \ref{coro P^1} we have managed to get a genus that depends linearly on k, this dependence still limits the normalised scrollar invariants we can obtain just using normalisations of nodal curves in $\Proj\times \Proj$. Indeed, calling $\ol e_i:=\frac{e_i}{g+k-1}$, for any curve as above, \[\ol e_{k-1}-\ol e_1=\frac{d}{g+k-1}< \frac{d}{(6d+1)(k-1)}\]
	\end{oss}
	\printbibliography
    \vskip 2 em
    \address{Humboldt Universität zu Berlin, Unter den Linden 6, 10117 Berlin}
    \email{riccardo.redigolo@hu-berlin.de}
\end{document}